\documentclass{article}
\usepackage{amsmath}
\usepackage{amsthm}
\usepackage{mathrsfs}
\usepackage{latexsym}
\usepackage{amssymb}
\usepackage{amscd}
\usepackage[dvips]{graphics}
\usepackage[all,cmtip]{xy}
\usepackage[margin=1.1in]{geometry}

\usepackage{authblk}

\bibliographystyle{plain}

\DeclareSymbolFont{AMSb}{U}{msb}{m}{n}
\DeclareMathSymbol{\N}{\mathbin}{AMSb}{"4E}
\DeclareMathSymbol{\Z}{\mathbin}{AMSb}{"5A}
\DeclareMathSymbol{\R}{\mathbin}{AMSb}{"52}
\DeclareMathSymbol{\Q}{\mathbin}{AMSb}{"51}
\DeclareMathSymbol{\I}{\mathbin}{AMSb}{"49}
\DeclareMathSymbol{\C}{\mathbin}{AMSb}{"43}

\DeclareFontFamily{U}{mathx}{\hyphenchar\font45}
\DeclareFontShape{U}{mathx}{m}{n}{
      <5> <6> <7> <8> <9> <10>
      <10.95> <12> <14.4> <17.28> <20.74> <24.88>
      mathx10
      }{}
\DeclareSymbolFont{mathx}{U}{mathx}{m}{n}
\DeclareFontSubstitution{U}{mathx}{m}{n}
\DeclareMathAccent{\widecheck}{0}{mathx}{"71}
\DeclareMathAccent{\wideparen}{0}{mathx}{"75}

\newcommand{\dbl}{[\hspace{-0.2ex}[}
\newcommand{\dbr}{]\hspace{-0.2ex}]}
\newcommand{\db}[1]{\dbl {#1} \dbr}
\newcommand{\res}[1]{\hspace{-0.6mm}\downarrow_{\hspace{-0.25mm}{#1}}}
\newcommand{\ind}[1]{\hspace{-0.6mm}\uparrow^{\hspace{-0.25mm}{#1}}}

\newcommand{\ctens}{\widehat{\otimes}}

\newcommand{\iso}{\cong}
\newcommand{\invlim}{\underleftarrow{\textnormal{lim}}\,}
\newcommand{\dirlim}{\underrightarrow{\textnormal{lim}}\,}
\newcommand{\ds}{\raisebox{0.5pt}{\,\big|\,}}
\newcommand{\onto}{\twoheadrightarrow}

\newcommand{\norm}[1]{\textnormal{N}_{#1}}

\newcommand{\dash}{\textnormal{-}}
\newcommand{\tn}[1]{\textnormal{#1}}

\newcommand{\xonto}[2][]{%
  \xrightarrow[#1]{#2}\mathrel{\mkern-14mu}\rightarrow
}

\numberwithin{equation}{section}

\title{Infinitely generated pseudocompact modules for finite groups and Weiss' Theorem}

\author[1]{John William MacQuarrie}
\author[2]{Peter Symonds}
\author[3]{Pavel Zalesskii}
\affil[1]{Universidade Federal de Minas Gerais, {{john@mat.ufmg.br}}}
\affil[2]{University of Manchester, {{peter.symonds@manchester.ac.uk}}}
\affil[3]{Universidade de Bras\'ilia, {{pz@mat.unb.br}}}

\begin{document}

\newtheorem{defn}[equation]{Def{i}nition}
\newtheorem{prop}[equation]{Proposition}
\newtheorem{lemma}[equation]{Lemma}
\newtheorem{theorem}[equation]{Theorem}
\newtheorem{corol}[equation]{Corollary}
\newtheorem{question}[equation]{Questions}

\maketitle

\section{Introduction}

In 1988, A.\ Weiss \cite{weiss} proved one of the most beautiful theorems in integral representation theory: it characterizes the finitely generated $\Z_p$-permutation lattices for a finite $p$-group $G$ in terms of  information about the restriction to a normal subgroup $N$ and the action of $G/N$ on the $N$-invariants of the module (cf.\ also \cite[\S 6]{weisscorr}, \cite{Roggencamp}).  In 1993, Weiss generalized his own result to finite extensions of $\Z_p$, yielding the following theorem:

\begin{theorem}[\cite{weisspi}, {\cite[App.\ 1]{Puig}}]\label{Theorem Weiss original}
Let $R$ be a finite extension of $\Z_p$, let $G$ be a finite $p$-group and let $U$ be a finitely generated $RG$-lattice.  Suppose there is a normal subgroup $N$ of $G$  such that
\begin{itemize}
\item the module $U$ restricted to $N$ is a free $RN$-module,
\item the submodule of fixed points $U^N$ is a permutation $RG$-module.
\end{itemize}
Then $U$ itself is a permutation $RG$-module.
\end{theorem}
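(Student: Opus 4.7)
The plan is to reduce the statement to Weiss's 1988 theorem over $\Z_p$ by restriction of scalars, and then to upgrade the resulting permutation structure from $\Z_p G$ to $RG$.

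Since $R$ is a finite free $\Z_p$-module, say of rank $d$, the group ring $RN$ is isomorphic to $(\Z_p N)^d$ as a $\Z_p N$-module. Hence $U\res{RN}$ being $RN$-free implies that $U$ is free as a $\Z_p N$-module. Similarly, any transitive $RG$-permutation module $R[G/H]$ is isomorphic to $(\Z_p[G/H])^d$ upon restriction to $\Z_p G$, so if $U^N$ is an $RG$-permutation module then it is also a $\Z_p G$-permutation module. Thus both hypotheses of Weiss's 1988 theorem are satisfied, and one concludes that $U$ is a $\Z_p G$-permutation module.

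The harder task is to upgrade this to an $RG$-permutation structure. By Krull--Schmidt for $RG$-lattices (available because $R$ is a complete local Noetherian ring), $U$ decomposes uniquely into indecomposable $RG$-summands $V_j$, each of which is then a $\Z_p G$-permutation module. The upgrade would reduce to the following structural claim: if $V$ is an indecomposable $RG$-lattice whose restriction to $\Z_p G$ is a permutation module, then $V \iso R[G/H]$ for some subgroup $H \leq G$. To establish this I would compare the fixed-point modules $V^H$ for varying $H$ and exploit the compatibility of the $\Z_p G$-decomposition with the $R$-action, examining reductions modulo both $p$ and the maximal ideal $\mathfrak{m}_R$ of $R$ and counting ranks over the residue field $R/\mathfrak{m}_R$.

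I expect the main obstacle to lie in precisely this upgrade step, since the $\Z_p G$-permutation decomposition of $U$ given by the 1988 theorem need not respect the $R$-action. A cleaner alternative, which avoids this difficulty entirely, is to imitate Weiss's original inductive proof directly over $R$: reduce by induction on $|G/N|$ to the case $G/N \iso C_p$; then, writing $U\res{RN}$ as a free $RN$-module and letting $g$ be a generator of $G/N$ act $RN$-semilinearly, analyse the norm map $1 + g + \cdots + g^{p-1} \colon U \to U^N$ together with the $RG$-permutation structure on $U^N$ to manufacture a $G$-invariant $R$-basis of $U$. The norm, Heller-translate and fixed-point arguments should pass from $\Z_p$ to $R$ essentially formally, since $R$ enters throughout only as a complete discrete valuation coefficient ring of mixed characteristic.
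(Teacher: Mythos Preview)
Your overall plan---restrict scalars to a smaller DVR where the result is known, then upgrade---is in fact the architecture the paper uses for the general case of its main theorem (which specializes to the statement you are proving). But both the upgrade step you propose and your sketched alternative have real problems.

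\textbf{The upgrade step.} The paper does not descend to $\Z_p$ but to a Cohen coefficient subring $S\subseteq R$ (a complete DVR with $p$ prime; for $R$ a finite extension of $\Z_p$ this is the maximal unramified subring). It then proves the theorem over $S$ directly and upgrades. The upgrade is \emph{not} via a structural claim of the type you state. Your claim---that an indecomposable $RG$-lattice which is a $\Z_pG$-permutation module must be isomorphic to $R[G/H]$---is neither proved nor obvious, and your sketch of ``comparing fixed-point modules and counting ranks'' does not indicate a mechanism. The paper instead keeps the hypotheses on $N$ in play: since $U$ is an $SG$-permutation module, the map $R\ctens_S U\to U$ is supersurjective, so the permutation cover $C\to U$ (constructed earlier in the paper) is a summand of $R\ctens_S U$ and hence $N$-free. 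One then shows the kernel $K$ of $C\to U$ vanishes by passing to $N$-coinvariants, using that $U_N\cong U^N$ is a permutation module and the exchange property. The point is that the upgrade relies essentially on ``$U\res{N}$ free'' and ``$U^N$ permutation'', not on a general lifting principle for indecomposables; your approach discards exactly the information that makes the upgrade work.

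\textbf{The alternative sketch.} Your inductive reduction is in the wrong direction: Weiss, and the paper, reduce to the case where $N$ is central of order $p$ (induction on $|G|$, replacing $U$ by $U^M$ for a central $M\leq N$ of order $p$), not to $G/N\cong C_p$. The relevant norm is $\Sigma_N=\sum_{n\in N}n$, landing in $U^N$; your element $1+g+\cdots+g^{p-1}$ for $g$ generating $G/N$ does not map $U$ to $U^N$. More importantly, ``should pass formally from $\Z_p$ to $R$'' is exactly what fails: when $R$ is ramified one must adjoin a primitive $p$th root of unity, work with \emph{monomial} (not just permutation) lattices, and prove that $U/\Sigma_N U$ is monomial over $R[\omega]$ before recovering a permutation structure. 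This is the content of the paper's Theorem~\ref{Weiss 3}, Proposition~\ref{Prop Weiss 3 diagonal quotient} and Lemma~\ref{Weiss 6}, and is the genuine extra work in Weiss's 1993 argument over 1988.
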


The importance of this theorem is highlighted by its applications.  Weiss originally proved it in order to show that every finite $p$-subgroup of augmentation-1 units in $RG$ is conjugate to a subgroup of $G$ \cite{weiss}. Puig used it to show that if a block of the group algebra $RF$ of a finite group $F$ is stably Morita equivalent to a nilpotent block then it is nilpotent \cite[Thm 8.2]{Puig}. No alternative proofs of these results are known.
We generalize Theorem \ref{Theorem Weiss original} in two ways.  First, we allow the coefficient ring $R$ to be any complete discrete valuation ring in mixed characteristic (that is, with residue field of prime characteristic $p$ and with field of fractions of characteristic $0$).  The second generalization is more important: we prove the result for all pseudocompact $RG$-lattices, possibly infinitely generated. What is important here is that the rank of the lattice is allowed to be infinite.   Pseudocompact is just the correct generalization of profinite to the case when $R$ is not profinite; we work with such modules because they are better behaved than abstract ones in the infinitely generated case.

The notion of permutation profinite modules was introduced by Mel'nikov in \cite{me}, where he studied their basic properties.  As  he  pointed out, profinite permutation modules are  important for the combinatorial theory of profinite groups and especially of pro-$p$ groups.  A different approach, which complements the approach of Mel'nikov in the study of the basic properties of profinite permutation modules, was introduced by the second author \cite{symondspermcom1}, where these modules were used to develop the cohomology theory of profinite groups along the lines followed for discrete groups, as in the book of Brown \cite{Brown}. We note also that permutation profinite modules appear naturally in Galois Theory \cite[Thm 1.3]{BJN}.

Before stating our main theorem for a finite $p$-group $G$ we define a pseudocompact permutation $RG$-lattice $U$ to be a pseudocompact module that is free over $R$ and having a pointed compact $G$-invariant basis (several alternative definitions are given in detail and compared in Section \ref{subsection permutation modules}). 

\begin{theorem}\label{main}
Let $R$ be a complete discrete valuation ring in mixed characteristic with residue field of characteristic $p$, let $G$ be a finite $p$-group and let $U$ be a pseudocompact $RG$-lattice.  Suppose there is a normal subgroup $N$ of $G$  such that
\begin{itemize}
\item the module $U$ restricted to $N$ is a free $RN$-module,
\item the submodule of fixed points $U^N$ is a permutation $RG$-module.
\end{itemize}
Then $U$ itself is a permutation $RG$-module.
\end{theorem}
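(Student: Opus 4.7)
The strategy is to reduce the pseudocompact case to the finitely generated case established in Theorem \ref{Theorem Weiss original}. Writing $U = \invlim U/V_i$ as the inverse limit over a cofinal system of open $G$-invariant $R$-submodules $V_i$, each quotient $U/V_i$ is a finitely generated $RG$-lattice. The first task is to choose the system so that each $U/V_i$ again satisfies Weiss's hypotheses: it should be $RN$-free and its $N$-fixed-points should form a finitely generated permutation $RG$-module. Classical Weiss then promotes each $U/V_i$ to a permutation $RG$-lattice, and the remaining task is to match these finite permutation structures coherently so that their inverse limit yields a pointed compact $G$-invariant basis of $U$ in the sense of Section \ref{subsection permutation modules}.

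For the inheritance of hypotheses, $RN$-freeness of $U/V_i$ is arranged by taking the $V_i$ themselves to be $RN$-projective, so that, combined with the $RN$-freeness of $U$ and the vanishing of Tate cohomology, $RN$-freeness passes to the quotient; at the same time the short exact sequence $0 \to V_i \to U \to U/V_i \to 0$ remains exact after taking $N$-fixed points, identifying $(U/V_i)^N$ with $U^N/V_i^N$. One then builds the $V_i$ from the pointed compact $G$-basis of the permutation module $U^N$, so that the quotients $U^N/V_i^N$ are themselves finitely generated permutation $RG$-modules. This step combines the pseudocompact $RN$-structure of $U$ with the combinatorial $G$-basis of $U^N$ in a compatible way.

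To assemble the conclusions of classical Weiss at each level into a pseudocompact permutation structure on $U$, one needs the permutation bases of the $U/V_i$ to be compatible with the transition maps $U/V_j \onto U/V_i$. The key rigidity is that for a finite $p$-group $N$, an $RN$-free permutation $RG$-lattice $W$ has $N$ acting freely on any permutation $G$-basis, so the basis of $W^N$ obtained by collapsing $N$-orbits essentially determines the basis of $W$ up to the $G$-action. Starting from the compatibility of the bases of $U^N/V_i^N$, which was arranged by construction, this rigidity lifts to compatibility of the permutation bases of the $U/V_i$, and the inverse limit yields the desired pointed compact $G$-invariant basis of $U$.

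The main obstacle I anticipate is the construction of the cofinal system in the second paragraph: producing open $RN$-projective $G$-submodules $V_i$ of $U$ whose induced quotients of $U^N$ are permutation $RG$-modules compatible with the given pointed compact $G$-basis. This requires a careful interplay between the pseudocompact $RN$-freeness of $U$ and the topology on the $G$-basis of $U^N$, and is essentially where the ``infinitely generated'' content of the theorem resides. Once the right system is identified, the subsequent coherence of finite permutation bases and the inverse-limit construction should be a relatively formal consequence of classical Weiss combined with the rigidity of permutation lattices over finite $p$-groups.
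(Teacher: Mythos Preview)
Your inverse-limit strategy is one the authors explicitly considered and set aside (see the introduction: ``One might expect that the generalization to infinite rank lattices could be proved using a simple limit argument, but this does not seem to be the case''). Two concrete gaps. First, Theorem~\ref{Theorem Weiss original} is stated only for finite extensions of $\Z_p$, not for arbitrary complete discrete valuation rings of mixed characteristic; the paper does not supply the finitely generated case over general $R$ as an independent input, so you cannot simply invoke it. Second---and this is the point you yourself flag as ``the main obstacle''---the cofinal system $\{V_i\}$ is never constructed. You need open $G$-stable $V_i\subseteq U$ for which $U/V_i$ is simultaneously $RN$-free and has permutation $N$-fixed points; the first constraint comes from the $RN$-module structure of $U$, the second from the $RG$-permutation structure of $U^N$, and there is no evident mechanism reconciling them. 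Your rigidity claim in the third paragraph is also false as stated: take $G$ elementary abelian of rank $3$, $N$ a rank-one factor, and $H\supset N$ a rank-two subgroup; the $p$ distinct complements $K$ to $N$ in $H$ yield pairwise non-isomorphic $RN$-free permutation modules $R[G/K]$ with isomorphic $N$-fixed points $R[G/H]$, so $W^N$ does not determine the permutation basis of $W$. (This last step could in fact be rescued by the paper's own Theorem~\ref{thm limits of Add are in Add}, which shows that an inverse limit of permutation modules is permutation without tracking bases---but you do not invoke it, and it would not repair the earlier gaps.)

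The paper's route is structurally different. After the standard reduction to $N$ central of order $p$, it adjoins a primitive $p$th root of unity $\omega$, proves a pseudocompact version of Weiss's monomial theorem (Theorem~\ref{Weiss 3}) to show that $U/\Sigma_N U$ is monomial over $R[\omega]$, and from this builds an explicit permutation lattice $L$ with $L/\Sigma_N L\cong U/\Sigma_N U$; Lemma~\ref{kernel SigmaN implies iso} then forces $L\cong U$. The passage to general $R$ goes through a Cohen coefficient ring and the permutation-cover machinery of Section~\ref{Section precovers and covers}. An inverse-limit reduction to the finitely generated case does appear, but only inside Proposition~\ref{Prop Weiss 3 diagonal quotient}, and there it is applied to the \emph{diagonal} part, where Lemma~\ref{Lemma diagonal sequence splits} makes the quotients automatically inherit the needed structure---precisely the inheritance you cannot arrange for permutation modules directly.
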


This theorem plays a central role in the proof of the pro-$p$ version \cite{PavelVirtuallyFree} of the theorem of Karras, Pietrowski, Solitar, Cohen and Scott \cite{KPS, CohenD, Scott},  which states that a virtually free
group acts on a tree with finite vertex stabilizers. Indeed, in the pro-$p$ case Theorem \ref{main} replaces Stallings' theory of ends, crucial in the proof of the original result.  Recently, the greater generality of coefficient ring has yielded applications of Theorem \ref{main} in the calculation of Picard groups of blocks of finite groups \cite{BolKesLin, Eisele, Livesey}.

One might expect that the generalization to infinite rank lattices could be proved using a simple limit argument, but this does not seem to be the case. Instead, we need to recast the proof in a way that does not depend on expressing a module as a sum of indecomposable modules. In order to do this we look closely at the properties of various classes of infinitely generated modules; these results should be of independent interest.

We make extensive use of relative homological algebra, and in particular of the concepts of covers and precovers of a module by a module with desirable properties.  We prove the existence of a large class of covers of pseudocompact modules (Theorem \ref{covers exist}).  We give an explicit description of the permutation cover of a pseudocompact module for a finite group when $R$ is a complete discrete valuation ring (Theorem \ref{Theorem explicit permutation cover}).  We also generalize a related theorem of Cliff and Weiss (Theorem \ref{ThmCliffWeiss}).

\subsubsection*{Acknowledgements}

The authors thank the anonymous referee, whose thorough reading of the article has significantly improved the exposition in several places.  The first author was partially supported by a CNPq Universal Grant, a UFMG Rec\'em Contratados grant and a FAPEMIG PPM Grant.  The second author was partially supported by an International Academic Fellowship from the Leverhulme Trust.  The third author wishes to acknowledge the financial support of CNPq and FAPDF.

\section{Definitions, terminology and background}

\subsection{Algebras and modules}

Our coefficient ring $R$ will always be a commutative pseudocompact ring.  In later sections we will require further structure on $R$, the main coefficient rings of interest to us being complete discrete valuation rings.

Let $\Lambda$ be a pseudocompact $R$-algebra (we follow the treatments in \cite{Brumer} and \cite{GabrielThesis}).  Examples of particular interest are the completed group algebra $R\db{G}$ of a profinite group $G$, or later the group algebra $RG$ of a finite group $G$.  
We consider the following categories of modules for $\Lambda$:
\begin{itemize}
\item $\Lambda\dash\tn{Mod}^\mathcal{C}$: the category whose objects are pseudocompact left $\Lambda$-modules \cite[\S 1]{Brumer}.  Objects of this category are complete, Hausdorff topological $\Lambda$-modules $U$ having a basis of open neighbourhoods of $0$ consisting of submodules $V$ for which $U/V$ has finite length.  In other words, the category of inverse limits of left $\Lambda$-modules of finite length over $R$.
\item $\Lambda\dash\tn{Mod}^\mathcal{D}$: the category whose objects are those topological $\Lambda$-modules having the discrete topology.  Such modules are precisely the direct limits of left $\Lambda$-modules of finite length over $R$.  We will call such modules \emph{discrete} (they are called ``locally finite'' in \cite{GabrielThesis}).
\item $\Lambda\dash\tn{Mod}^\tn{abs}$: the category of abstract left $\Lambda$-modules
\end{itemize}
Morphisms in $\Lambda\dash\tn{Mod}^\mathcal{C}$ and $\Lambda\dash\tn{Mod}^\mathcal{D}$ are continuous $\Lambda$-module homomorphisms while morphisms in $\Lambda\dash\tn{Mod}^\textnormal{abs}$ are arbitrary $\Lambda$-module homomorphisms.  The corresponding categories of right modules are denoted $\tn{Mod}^\mathcal{C}\dash\Lambda, \tn{Mod}^\mathcal{D}\dash\Lambda$ and $\tn{Mod}^{\tn{abs}}\dash\Lambda$.   We include the decorations $\mathcal{C}, \mathcal{D}, \tn{abs}$ inconsistently, omitting them when the category is clear from the context.  We occasionally write $\tn{Hom}^{\tn{cts}}$ or $\tn{Hom}^{\tn{abs}}$ to make explicit whether we consider continuous or arbitrary homomorphisms.
A \emph{$\Lambda$-lattice} is a $\Lambda$-module in $\Lambda\dash\tn{Mod}^\mathcal{C}, \Lambda\dash\tn{Mod}^\tn{abs}$ or in $\Lambda\dash\tn{Mod}^\mathcal{D}$ that is projective as an $R$-module in the corresponding category.  Note that the way sums and products appear is not quite as one might expect from the abstract case -- for example, free pseudocompact $\Lambda$-modules are direct products of copies of $\Lambda$ (cf.\ \cite[\S 1]{Brumer}).

\medskip

Following \cite{Brumer}, denote by $E_R$ the dualizer of $R$: that is, $E_R$ is the injective hull in $R\dash\tn{Mod}^{\mathcal{D}}$ of the module
$$\bigoplus_{\mathfrak{m}}R/\mathfrak{m},$$
where $\mathfrak{m}$ runs through the maximal ideals of $R$.  The functor sending a module $M$ (in $\Lambda\dash\tn{Mod}^\mathcal{C}$ or $\tn{Mod}^\mathcal{D}\dash\Lambda$) to the module of continuous $R$-module homomorphisms from $M$ to $E_R$ induces a Pontryagin duality between the categories $\Lambda\dash\tn{Mod}^\mathcal{C}$ and $\tn{Mod}^\mathcal{D}\dash\Lambda$ \cite[Prop.2.3]{Brumer} so that any result in one category corresponds to a dual result in the other.  We will at times use this observation without comment.  Given a module $M$ in either the pseudocompact or the discrete category, we denote its Pontryagin dual by $M^*$.

\medskip

Denote by $\Lambda\dash\tn{Proj}$ (resp.\ $\Lambda\dash\tn{Inj}$) the full subcategories of (a given category of) left $\Lambda$-modules having as objects the projective modules (resp.\ injective modules).  The category $\Lambda\dash\tn{Mod}^\mathcal{C}$ has projective covers and the category $\Lambda\dash\tn{Mod}^\mathcal{D}$ has injective hulls \cite[Ch.II, Thm 2]{GabrielThesis}.  An arbitrary product of projective modules in $\Lambda\dash\tn{Mod}^\mathcal{C}$ is projective and an arbitrary sum of injective modules in $\Lambda\dash\tn{Mod}^\mathcal{D}$ is injective (the first statement follows from \cite[Cor.\ 3.3]{Brumer} and the second is its dual).

\medskip

Given a module $M$ in $\Lambda\dash\tn{Mod}^\mathcal{C}$, denote by $\tn{Add}^{\mathcal{C}}(M)$ the full subcategory of $\Lambda\dash\tn{Mod}^\mathcal{C}$ whose objects are summands of products of $M$.  Denote by $\tn{add}^{\mathcal{C}}(M)$ the full subcategory of $\tn{Add}^{\mathcal{C}}(M)$ consisting of summands of finite products of $M$.  Dually, given a module $M$ in $\Lambda\dash\tn{Mod}^\mathcal{D}$ we have the categories $\tn{Add}^{\mathcal{D}}(M)$ of summands of direct sums of $M$ and $\tn{add}^{\mathcal{D}}(M)$ whose objects are summands of finite sums of $M$.  If $M\in \Lambda\dash\tn{Mod}^\mathcal{C}$, the category dual to $\tn{Add}^{\mathcal{C}}(M)$ is $\tn{Add}^{\mathcal{D}}(M^*)$. 

\medskip

Nakayama's Lemma holds in the category $R\dash\tn{Mod}^\mathcal{C}$: if $N$ is a closed submodule of the pseudocompact $R$-module $M$ such that $N+\tn{Rad}(R)M = M$, where $\tn{Rad}(R)$ is the intersection of the maximal ideals of $R$, then $M=N$ \cite[Lem.\ 1.4]{Brumer}.  There is a dual version for $R\dash\tn{Mod}^\mathcal{D}$.

\medskip

It is convenient to be able to describe a $\Lambda$-lattice in terms of $\Lambda$-lattices of finite rank over $R$.  We prove only a special case that we will require later.

\begin{lemma}\label{RG lattice limit of finite rank RG lattices}
Let $G$ be a finite group, $R$ a complete discrete valuation ring and $U$ a pseudocompact $RG$-lattice.  Then $U$ can be expressed as the inverse limit of an inverse system of $RG$-lattices of finite rank with surjective homomorphisms.
\end{lemma}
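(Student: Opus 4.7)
The plan is to use the $R$-projectivity of $U$ to fix a topological isomorphism $U\cong \prod_{i\in I}R$ of pseudocompact $R$-modules (available because the Pontryagin dual $U^*$ is $R$-injective in $R\dash\tn{Mod}^{\mathcal{D}}$ and hence, by a standard Matlis-type argument over a complete DVR, a direct sum of copies of the indecomposable injective $E_R$), and then to convert the non-$G$-equivariant $R$-linear coordinate projections into $RG$-equivariant surjections from $U$ onto finite-rank $RG$-lattices by averaging over the finite group $G$. These averaged surjections will form the desired inverse system.

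For each finite $J\subseteq I$, let $W_J\subseteq U$ denote the closed $R$-submodule of elements with vanishing $J$-coordinates. Although $W_J$ is not $G$-stable, $\tilde W_J:=\bigcap_{g\in G}gW_J$ is a closed $RG$-submodule (it is a finite intersection of $R$-submodules permuted by $G$), and the natural $R$-linear map
\[U/\tilde W_J\;\hookrightarrow\; \prod_{g\in G}U/gW_J,\qquad u+\tilde W_J\longmapsto (u+gW_J)_{g\in G},\]
is injective and $G$-equivariant once the target is endowed with the $RG$-structure $h\cdot(y_g)_g=(hy_{h^{-1}g})_g$. Since each $U/gW_J\cong R^J$ as $R$-module, $U/\tilde W_J$ embeds as an $RG$-submodule of the finite-rank free $R$-module $R^{G\times J}$; over a DVR, submodules of finite-rank free modules are themselves free of finite rank, so $U/\tilde W_J$ is a finite-rank $RG$-lattice. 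As $J$ ranges over finite subsets of $I$, the $\{U/\tilde W_J\}$ with the evident quotient maps then form an inverse system of finite-rank $RG$-lattices with surjective connecting homomorphisms.

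It remains to identify $U$ with $\varprojlim_J U/\tilde W_J$. Injectivity of the canonical map is immediate from $\bigcap_J\tilde W_J\subseteq\bigcap_J W_J=0$. The remaining step, which I expect to be the main obstacle, is the cofinality statement that every open $RG$-submodule $V\subseteq U$ contains some $\tilde W_J$. To establish this, choose $n$ with $\pi^nU\subseteq V$; then $V/\pi^nU$ is an open submodule of $U/\pi^nU\cong \prod_{i\in I}R/\pi^n$. Because $R/\pi^n$ has finite length its pseudocompact topology is discrete, so the pseudocompact (= product) topology on $\prod_{i\in I}R/\pi^n$ has as a fundamental neighborhood base of $0$ the submodules $\prod_{i\in I\setminus J}R/\pi^n$ for finite $J\subseteq I$. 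Hence $V/\pi^nU$ contains one of them, and lifting gives $V\supseteq W_J+\pi^nU\supseteq \tilde W_J$ as required.
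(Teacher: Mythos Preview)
Your proof is correct and follows essentially the same construction as the paper's: write $U\cong\prod_{i\in I}R$ over $R$, and for each finite $J\subseteq I$ intersect the $G$-translates of the coordinate kernel $W_J$ to obtain a $G$-stable closed submodule $\tilde W_J$ (the paper indexes by the cofinite complement, writing $U_J=\prod_{i\in J}R$ and $V_J=\bigcap_g gU_J$). The diagonal embedding $U/\tilde W_J\hookrightarrow\prod_{g\in G}U/gW_J$ and the DVR argument that a submodule of a finite-rank free $R$-module is again free of finite rank are exactly what the paper uses. The only difference is in the final identification: the paper simply notes that $\tilde W_J\subseteq W_J$ together with $U=\varprojlim_J U/W_J$ already forces $U=\varprojlim_J U/\tilde W_J$, whereas your cofinality argument through $\pi^nU$ reaches the same conclusion by a longer route. (Also, the $G$-action you place on $\prod_{g}U/gW_J$ is unnecessary: only the $R$-module embedding is needed, since $U/\tilde W_J$ is already an $RG$-module.)
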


\begin{proof}
As an $R$-lattice, we can write $U = \prod_{i\in I}R$.  For each cofinite subset $J$ of $I$, let $U_J=\prod_{i\in J}R$ and  $V_J = \bigcap_{g\in G}gU_J$.  Then $V_J$ is an $RG$-submodule of $U$.    The obvious homomorphism 
$$U/V_J \to \bigoplus_{g\in G} U/gU_J$$
is injective, so $U/V_J$ is isomorphic as an $R$-module to a submodule of an $R$-lattice of finite rank, hence it is an $RG$-lattice of finite rank itself, since $R$ is a discrete valuation domain.

Because $V_J \subseteq U_J$ and 
\[ 
U = \invlim_J U/U_J,
\]
we have 
\[ 
U = \invlim_J U/V_J.
\]
\end{proof}

\subsection{Tensor products, homomorphisms, Ext functors}\label{subsection tensor hom and Ext}

The references in this section are sometimes incomplete, insofar as the proofs given are for profinite, rather than pseudocompact modules.  What we mean is that the proof given in the reference also works for pseudocompact modules.

Let $\Lambda, \Gamma$ be pseudocompact $R$-algebras, $M$ a pseudocompact $\Lambda\dash\Gamma$-bimodule (that is, $M$ is a bimodule that is pseudocompact both as a left $\Lambda$-module and as a right $\Gamma$-module) and $N$ a pseudocompact left $\Gamma$-module.  The \emph{completed tensor product} defined in \cite[\S 2]{Brumer} is a pseudocompact $R$-module $M\ctens_{\Gamma}N$ together with a continuous bilinear map $M\times N\to M\ctens_{\Gamma}N$, written $(m,n)\mapsto m\ctens n$ satisfying the condition that $mg\ctens n = m\ctens gn$ for all $m\in M, n\in N, g\in \Gamma$ and universal with respect to this condition.  The completed tensor product inherits naturally the structure of a left $\Lambda$-module with multiplication given on pure tensors by $x(m\ctens n) := xm\ctens n$.  If $M = \invlim M_i, N = \invlim_j N_i$ are expressions of $M,N$ as inverse limits of $\Gamma$-modules of finite $R$-length, then 
$$M\ctens_{\Gamma}N = \invlim_{i,j}M_i\otimes N_j$$
\cite[Lem.\ 5.5.1]{RZ12010}.  The completed tensor product commutes with inverse limits in both variables \cite[Lem.\ 5.5.2]{RZ12010}. It is not, in general, isomorphic to the abstract tensor product, but we do have the following result.

\begin{prop}\label{prop tensors coincide}
If either $M$ or $N$ is finitely presented as a $\Gamma$-module, or if both $M$ and $N$ are finitely generated as $\Gamma$-modules, then the natural map $M\otimes_{\Gamma}N\to M\ctens_{\Gamma}N$ is an isomorphism.
\end{prop}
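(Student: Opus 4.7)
My plan is to exploit right exactness of both tensor functors together with the base-case observation that they agree on finite free modules. For any finite $k$ the natural maps $\Gamma^k \otimes_\Gamma N \to N^k$ and $\Gamma^k \ctens_\Gamma N \to N^k$ are both isomorphisms (finite direct sums and finite direct products coincide, and $\Gamma \ctens_\Gamma N \cong N$), and similarly with the roles of the variables swapped.

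When $M$ is finitely presented, I take a finite presentation $\Gamma^{m'} \to \Gamma^m \to M \to 0$ and apply both $-\otimes_\Gamma N$ and $-\ctens_\Gamma N$. Both functors are right exact (for the completed one, via its compatibility with inverse limits and Pontryagin duality), yielding a commutative diagram with exact rows in which the left and middle terms are both $N^{m'} \to N^m$, identified under the natural transformation. The five-lemma then forces the induced map $M \otimes_\Gamma N \to M \ctens_\Gamma N$ to be an isomorphism. The case in which $N$ is finitely presented is symmetric.

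For the last case, when both $M$ and $N$ are only finitely generated, I choose surjections $\Gamma^m \onto M$ and $\Gamma^n \onto N$ with closed (but generally not finitely generated) kernels $K$ and $L$. A standard two-variable right exactness argument identifies both $M \otimes_\Gamma N$ and $M \ctens_\Gamma N$ as quotients of $\Gamma^{mn}$ by the submodule $K^n + L^m$, where $K^n$ denotes the image of $K \otimes_\Gamma \Gamma^n \to \Gamma^{mn}$ (which agrees with the image of $K \ctens_\Gamma \Gamma^n \to \Gamma^{mn}$ by the base case), and similarly for $L^m$. The main point --- and the main difficulty of case 3 --- is that this submodule is the same in both computations: $K^n$ and $L^m$ are closed submodules of the pseudocompact module $\Gamma^{mn}$, and their sum is the image of the continuous map $K^n \oplus L^m \to \Gamma^{mn}$, $(x,y)\mapsto x+y$; images of morphisms in $\Lambda\dash\tn{Mod}^\mathcal{C}$ are automatically closed (the category being abelian), so no topological closure intervenes and the two quotients coincide as $R$-modules, giving the desired isomorphism.
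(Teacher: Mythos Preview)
Your proof is correct and follows essentially the same route as the argument the paper cites from \cite{Corob Cook Bieri}: right exactness of both tensor products on a finite free presentation, plus a five-lemma/diagram chase. Your treatment of the ``both finitely generated'' case is organized symmetrically (identifying each tensor product with $\Gamma^{mn}/(K^n+L^m)$ and checking that $K^n+L^m$ is closed), whereas the paper's argument keeps the one-variable diagram and simply observes that the left-hand vertical map $K\otimes_\Gamma N\to K\ctens_\Gamma N$ is surjective when $N$ is finitely generated (since both sides are quotients of $K^n$); these are two packagings of the same computation. One small remark: your parenthetical justification of right exactness of $-\ctens_\Gamma N$ (``via its compatibility with inverse limits and Pontryagin duality'') is a little compressed --- compatibility with inverse limits by itself says nothing about cokernels; the point is that under duality $-\ctens_\Gamma N$ corresponds to a left-exact $\Hom$ functor on the discrete side, which is what yields right exactness.
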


\begin{proof}
The result for finitely presented modules is \cite[Lem.\ 1.1]{Corob Cook Bieri}.  When both modules are finitely generated, one need only observe that the left-most vertical map in the proof of \cite[Lem.\ 1.1 (iii)]{Corob Cook Bieri} is still surjective, so the same proof works.
\end{proof}

Note that the assertion in \cite[Lem.\ 2.1 (ii)]{Brumer} that the tensor products $\ctens$ and $\otimes$ coincide when only one of the modules involved is finitely generated is false in general \cite[Corol.\ 2.4]{Corob Cook Bieri}. 

\medskip

If $M$ is a topological $\Lambda\dash\Gamma$-bimodule and $N$ is a topological left $\Lambda$-module, $\tn{Hom}_{\Lambda}(M,N)$ denotes the left $\Gamma$-module of continuous $\Lambda$-module homomorphisms from $M$ to $N$ and is given the compact-open topology.  If $M,N$ are pseudocompact, then $\tn{Hom}_{\Lambda}(M,N) = \invlim_i \tn{Hom}_{\Lambda}(M,N_i)$, where $N = \invlim N_i$ and each $N_i$ has finite length over $R$.  The compact-open topology coincides with the topology obtained by declaring each $\tn{Hom}_{\Lambda}(M,N_i)$ to be discrete and giving $\tn{Hom}_{\Lambda}(M,N)$ the inverse limit topology.  In particular, $\tn{Hom}_{\Lambda}(M,N)$ commutes with products in the second variable.  We also have that $\tn{Hom}_{\Lambda}(M,N)$ commutes with finite products in the first variable.  If $M$ is finitely generated as a $\Lambda$-module and $N$ is either pseudocompact or discrete, then every abstract homomorphism from $M$ to $N$ is continuous.  If $M$ is finitely generated and $N$ is pseudocompact then $\tn{Hom}_{\Lambda}(M,N)$ is pseudocompact as an $R$-module.  Furthermore:

\begin{lemma}\label{Lemma E pseudocompact}
If $M$ is a finitely generated pseudocompact $\Lambda$-module, then $E = \tn{End}_{\Lambda}(M)$ is a pseudocompact $R$-algebra.
\end{lemma}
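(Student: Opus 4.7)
From the preceding paragraph, $E = \End_\Lambda(M) = \Hom_\Lambda(M,M)$ is already pseudocompact as an $R$-module; a neighborhood basis of $0$ is given by the \emph{left} ideals
\[
I(U) := \{f \in E : f(M) \subseteq U\},
\]
indexed by the open $\Lambda$-submodules $U \subseteq M$, with $E/I(U) \hookrightarrow \Hom_\Lambda(M, M/U)$ of finite $R$-length. To promote this to a pseudocompact $R$-algebra structure, it suffices to exhibit a cofinal family of open \emph{two-sided} ideals with quotients still of finite $R$-length.

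For each open $U \subseteq M$ the plan is to refine $U$ to an open $E$-invariant submodule $V \subseteq U$ by setting
\[
V := \bigcap_{\phi \in \Hom_\Lambda(M, M/U)} \ker \phi.
\]
$E$-invariance is immediate, since for $f \in E$ the composite $\phi \circ f$ is again a $\Lambda$-homomorphism $M \to M/U$, so $\phi(f(V)) = 0$ for every $\phi$. Given $E$-invariance of $V$, the ideal $I(V)$ is automatically two-sided: for $f \in I(V)$ and $g \in E$ one has $(fg)(M) \subseteq f(M) \subseteq V$ and $(gf)(M) = g(f(M)) \subseteq g(V) \subseteq V$. Moreover $E/I(V)$ embeds in $\End_\Lambda(M/V)$, and the inclusion $V \subseteq U$ will give the required cofinality.

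The main hurdle is to verify that $V$ is itself open, i.e.\ that $M/V$ has finite $R$-length. For this I would observe that the evaluation map
\[
M \longrightarrow \Hom_R\!\bigl(\Hom_\Lambda(M, M/U),\, M/U\bigr),\qquad m \mapsto \bigl(\phi \mapsto \phi(m)\bigr),
\]
has kernel exactly $V$. Since $M$ is finitely generated, fixing $n$ $\Lambda$-generators yields an embedding $\Hom_\Lambda(M, M/U) \hookrightarrow (M/U)^n$, so $\Hom_\Lambda(M, M/U)$ has finite $R$-length; then $\Hom_R(A,B)$ of two finite-length $R$-modules is of finite $R$-length (induct along a composition series of $A$, reducing to $\Hom_R(R/\mathfrak{m}, B) = B[\mathfrak{m}]$). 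Hence $M/V$ embeds in a finite-length $R$-module, so $V$ is open, $\End_\Lambda(M/V)$ has finite $R$-length, and $E \iso \invlim_V E/I(V)$ displays $E$ as an inverse limit of $R$-algebras of finite $R$-length, i.e.\ as a pseudocompact $R$-algebra.
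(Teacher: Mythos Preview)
Your argument is correct, modulo a harmless slip: with the usual convention $(fg)(m)=f(g(m))$, the $I(U)$ are \emph{right} ideals, not left (indeed $(fg)(M)=f(g(M))\subseteq f(M)\subseteq U$, whereas $(gf)(M)\subseteq g(U)$ need not lie in $U$). This does not affect anything, since you immediately pass to the two-sided $I(V)$.

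The paper reaches the same conclusion by a shorter and more direct route. Rather than starting from an arbitrary open $U$ and then manufacturing an $E$-stable open submodule $V$ inside it, the paper writes $\Lambda=\invlim\Lambda/I_i$ and takes $V=\tn{Cl}(I_iM)$ directly: these submodules are automatically $E$-stable because every $\Lambda$-endomorphism preserves $I_iM$ and hence its closure, and $M/\tn{Cl}(I_iM)$ has finite $R$-length because it is a quotient of $M/I_iM$, which is finitely generated over the finite-length ring $\Lambda/I_i$. Your construction, by contrast, is intrinsic to $M$ (it is essentially the reject of $M/U$ in $M$) and never mentions the open ideals of $\Lambda$; the price is the extra work of showing $V$ open via the evaluation embedding $M/V\hookrightarrow\Hom_R(\Hom_\Lambda(M,M/U),M/U)$. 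Both approaches produce a cofinal system of open two-sided ideals $J$ with $E/J\hookrightarrow\End_\Lambda(M/V)$ of finite length, so the endgame is identical.
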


\begin{proof}
Write $\Lambda = \invlim \Lambda/I_i$ with each $\Lambda/I_i$ of finite length over $R$.  Then $M/\tn{Cl}(I_iM)$ has finite length over $R$ (where $\tn{Cl}$ denotes topological closure) and hence so does $\tn{End}_{\Lambda}(M/\tn{Cl}(I_iM))$.  Let $J_i$ denote the two sided ideal $\{\rho\in E\,|\,\rho(M)\leqslant \tn{Cl}(I_iM)\}$ of $E$.  The map $E/J_i\to \tn{End}_{\Lambda}(M/\tn{Cl}(I_iM))$ is injective, so each $E/J_i$ has finite length over $R$.  But $E = \invlim E/J_i$ and hence $E$ is pseudocompact.
\end{proof}

Let $X$ be a pseudocompact $\Lambda\dash\Gamma$-bimodule that is finitely generated as a left $\Lambda$-module.  The functor
$$X\ctens_{\Gamma}- : \Gamma\dash\tn{Mod}^\mathcal{C} \to \Lambda\dash\tn{Mod}^\mathcal{C}$$
is left adjoint to the functor
$$\tn{Hom}_{\Lambda}(X,-): \Lambda\dash\tn{Mod}^\mathcal{C}\to \Gamma\dash\tn{Mod}^\mathcal{C}:$$
given a pseudocompact left $\Gamma$-module $A$ and a pseudocompact left $\Lambda$-module $C = \invlim C_i$ with each $C_i$ of finite length, we have  
\begin{align*}
\tn{Hom}_{\Gamma}(A,\tn{Hom}_{\Lambda}(X,\invlim C_i))
& \iso \tn{Hom}_{\Gamma}(A,\invlim \tn{Hom}_{\Lambda}(X,C_i)) \\
& \iso \invlim\tn{Hom}_{\Gamma}(A,\tn{Hom}_{\Lambda}(X, C_i)) \\
& \iso \invlim\tn{Hom}_{\Lambda}(X\ctens_{\Gamma}A, C_i) \\ 
& \iso \tn{Hom}_{\Lambda}(X\ctens_{\Gamma}A, \invlim C_i),
\end{align*}
where the second to last isomorphism is \cite[Lem.\ 2.4]{Brumer} (though note that the side conventions there are different).  If $\Psi, \Phi$ are other pseudocompact algebras, $A$ is a pseudocompact $\Gamma\dash \Phi$-module and $C$ is a pseudocompact $\Lambda\dash \Psi$-bimodule, then the above isomorphisms are isomorphisms of $\Phi\dash\Psi$-bimodules \cite[Thm 4.2]{BoggiCC}.

If $H$ is a closed subgroup of the profinite group $G$ and $V$ is a pseudocompact $R\db{H}$-module, the induced $R\db{G}$-module $V\ind{G}$ is defined to be
$$V\ind{G} := R\db{G}\ctens_{R\db{H}}V,$$
where $R\db{G}$ is treated as an $R\db{G}\dash R\db{H}$-bimodule in the obvious way.  Given a pseudocompact $R\db{G}$-module $U$, the restriction $U\res{H}$ (that is, $U$ treated as an $R\db{H}$-module) can be expressed as $U\res{H} = \tn{Hom}_{R\db{G}}(R\db{G},U)$, where again $R\db{G}$ is being treated as an $R\db{G}\dash R\db{H}$-bimodule in the obvious way.  It follows from the adjunction above that induction is left adjoint to restriction.  When $H$ is open in $G$, induction is also right adjoint to restriction.  To see this, we need only note that, since $H$ is open, restriction is anyway left adjoint to the coinduction functor $\tn{Hom}_{R\db{H}}(R\db{G},-)$.  But since $R\db{G}$ is finitely presented as an $R\db{H}$-module, every functor involved coincides with its abstract counterpart.  We may thus apply \cite[Lem.\ 6.3.4]{Weibel}.  Let $H,L$ be closed subgroups of the profinite group $G$ with either $H$ or $L$ open.  The Mackey decomposition formula applies.  That is, given a pseudocompact $R\db{H}$-module $V$, we have
$$V\ind{G}\res{L} \iso \bigoplus_{g\in L\backslash G/H}{}^g\!V\res{{}^g\!H\cap L}\ind{L},$$
where $g$ runs through a (finite) set of double coset representatives of $L\backslash G/H$ and ${}^g\!V$ is the $R\db{{}^g\!H}$-module $V$ with action $ghg^{-1}\cdot v = hv$.

\medskip

Given pseudocompact $\Lambda$-modules $M,N$, denote by $\tn{Ext}_{\Lambda}^i(M,N)$ the $i$th right derived functor of $\tn{Hom}_{\Lambda}(-,N)$ applied to $M$.  It can be calculated via a projective resolution for $M$ in the usual way.  Since $\tn{Hom}_{\Lambda}$ commutes with finite products in the first variable and arbitrary products in the second, it follows that $\tn{Ext}^i_{\Lambda}(M,N)$ also commutes with finite products in $M$ and arbitrary products in $N$.  However, it does not commute with arbitrary inverse limits in the second variable.  Since $\Lambda\dash\tn{Mod}^\mathcal{C}$ is abelian and has enough projectives, the functor $\tn{Ext}_{\Lambda}^1$ classifies extensions as one would hope \cite[Vis.\ 3.4.6]{Weibel}.

Dually, in $\Lambda\dash\tn{Mod}^\mathcal{D}$, denote by $\tn{Ext}_{\Lambda}^i(M,N)$ the $i$th right derived functor of $\tn{Hom}_{\Lambda}(M,-)$ applied to $N$.  By Pontryagin duality,
$$\tn{Ext}_{\Lambda\dash\tn{Mod}^{\mathcal{C}}}^i(M,N) \iso \tn{Ext}_{\tn{Mod}^{\mathcal{D}}\dash\Lambda}^i(N^*, M^*).$$

In what follows, when we say that $\Lambda$ is finitely generated over $R$, we mean finitely generated as an $R$-module.  Examples of particular interest are $\Lambda = RG$ for a finite group $G$.

\begin{lemma}
Suppose $\Lambda$ is finitely generated over $R$.  If $M,N$ are pseudocompact $\Lambda$-modules with $M$ finitely generated, and if either $R$ is noetherian or $M$ is a lattice, then
$$\tn{Ext}^i_{\Lambda\dash\tn{Mod}^{\mathcal{C}}}(M,N) \iso \tn{Ext}^i_{\Lambda\dash\tn{Mod}^{\tn{abs}}}(M,N).$$
\end{lemma}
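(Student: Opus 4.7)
The strategy is to exhibit a common projective resolution of $M$ that computes both Ext groups simultaneously. Concretely, I would construct a resolution
\[
\cdots \to P_1 \to P_0 \to M \to 0
\]
in $\Lambda\dash\tn{Mod}^{\mathcal{C}}$ in which every $P_i$ is a finitely generated projective pseudocompact $\Lambda$-module, and then argue that this same complex also serves as a resolution by abstractly projective modules and that the Hom-complexes computed continuously and abstractly coincide term by term.

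For the construction of $P_\bullet$, I would proceed by induction on $i$, handling the two hypotheses separately to guarantee that each syzygy is again finitely generated. In the noetherian case, $\Lambda$ is a finitely generated module over the noetherian commutative ring $R$, hence a noetherian ring in the abstract sense; so the kernel of any surjection from a finitely generated $\Lambda$-module to another is finitely generated, and the induction proceeds trivially starting from any surjection $P_0 = \Lambda^{n_0} \onto M$ (which exists because $M$ is finitely generated and pseudocompact, so has a projective cover by Gabriel; a finitely generated lift $\Lambda^{n_0} \onto M$ then exists). In the lattice case, I would use the fact that a surjection $\Lambda^{n_0} \onto M$ with $M$ an $R$-projective module splits as $R$-modules, so the kernel $K_0$ is an $R$-direct summand of $\Lambda^{n_0}$. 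Since $\Lambda$ is $R$-finitely generated, $K_0$ is therefore a finitely generated projective $R$-module, hence again a finitely generated lattice. The inductive step repeats verbatim on $K_0$ in place of $M$.

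Next, each $P_i$, being a summand of some $\Lambda^{n_i}$, is in particular a direct summand of a free abstract $\Lambda$-module, hence projective in $\Lambda\dash\tn{Mod}^{\tn{abs}}$. Thus $P_\bullet \to M$ is simultaneously a projective resolution in both categories, so both $\tn{Ext}^i$-groups may be computed as the cohomology of, respectively, the continuous and the abstract Hom-complexes applied to $P_\bullet$. To identify these complexes, I invoke the fact recalled earlier in the paper that if $P$ is a finitely generated pseudocompact $\Lambda$-module and $N$ is pseudocompact, then every abstract $\Lambda$-module homomorphism $P \to N$ is automatically continuous, so
\[
\tn{Hom}^{\tn{cts}}_{\Lambda}(P_i,N) = \tn{Hom}^{\tn{abs}}_{\Lambda}(P_i,N)
\]
for every $i$. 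Taking cohomology of this common complex yields the claimed isomorphism.

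The only step that requires genuine care is the inductive construction of the resolution, and this is where the dichotomy in the hypothesis enters: one needs finite generation to persist to all higher syzygies. The noetherian case is routine ring theory, but the lattice case is more subtle and relies crucially on the splitting of $R$-module sequences over projective quotients to control the kernel both as an $R$-module and as a $\Lambda$-module. I would expect this to be the only place where the two hypotheses must actually be distinguished; everything after the resolution is produced is formal.
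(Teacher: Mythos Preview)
Your approach is exactly the paper's: construct a resolution of $M$ by finitely generated projectives, observe that such projectives are also projective in the abstract category, and use that continuous and abstract $\Hom$ from a finitely generated source coincide. The paper's proof compresses all of this into the single assertion ``Our conditions guarantee that there exists a projective resolution of $M$ in $\Lambda\dash\tn{Mod}^{\mathcal{C}}$ with each module finitely generated.''

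One point deserves care in your lattice-case induction. From $K_0$ being an $R$-direct summand of $\Lambda^{n_0}$ you correctly deduce that $K_0$ is finitely generated over $R$ (hence over $\Lambda$), but your further claim that $K_0$ is $R$-\emph{projective} does not follow: a summand of $\Lambda^{n_0}$ is $R$-projective only if $\Lambda^{n_0}$ is, i.e.\ only if $\Lambda$ itself is an $R$-lattice. Without $K_0$ being a lattice the same splitting argument is unavailable at the next stage, so the induction as written does not continue past the first syzygy. In the paper's applications one has $\Lambda = RG$ free over $R$ and the issue disappears; the paper's terse proof does not spell this out either.
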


\begin{proof}
Our conditions guarantee that there exists a projective resolution of $M$ in $\Lambda\dash\tn{Mod}^{\mathcal{C}}$ with each module finitely generated.  This is also a projective resolution of $M$ by projective modules in $\Lambda\dash\tn{Mod}^{\tn{abs}}$.  But $\tn{Hom}_{\Lambda\dash\tn{Mod}^{\mathcal{C}}}(P,N) = \tn{Hom}_{\Lambda\dash\tn{Mod}^{\tn{abs}}}(P,N)$ whenever $P$ is finitely generated (that is, every abstract homomorphism $P\to N$ is continuous), and so the groups obtained are the same.
\end{proof}

\begin{lemma}
Suppose that $R$ is noetherian and $\Lambda$ is finitely generated over $R$.  For $M,N\in \Lambda\dash\tn{Mod}^{\mathcal{D}}$ we have
$$\tn{Ext}^i_{\Lambda\dash\tn{Mod}^{\mathcal{D}}}(M,N) \iso \tn{Ext}^i_{\Lambda\dash\tn{Mod}^{\tn{abs}}}(M,N).$$
\end{lemma}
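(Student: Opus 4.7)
The plan is to compute both Ext groups using a common injective resolution of $N$ chosen in $\Lambda\dash\tn{Mod}^\mathcal{D}$.  The starting observation is that any abstract $\Lambda$-homomorphism $f:M\to I$ between discrete modules is automatically continuous---its kernel is a submodule of the discrete module $M$, hence open---so $\tn{Hom}^{\tn{cts}}_\Lambda(M, I) = \tn{Hom}^{\tn{abs}}_\Lambda(M, I)$ whenever both $M$ and $I$ lie in $\Lambda\dash\tn{Mod}^\mathcal{D}$.  If one can produce an injective resolution $N\to I^\bullet$ in $\Lambda\dash\tn{Mod}^\mathcal{D}$ whose terms are simultaneously abstract injective, then the complexes $\tn{Hom}_\Lambda(M,I^\bullet)$ in the two categories coincide term by term, and the lemma is immediate.

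The heart of the argument is therefore the claim that every injective object of $\Lambda\dash\tn{Mod}^\mathcal{D}$ is abstract injective.  I would first reduce to the single cogenerator $\Lambda^* := \tn{Hom}_R^{\tn{cts}}(\Lambda, E_R)$, viewing $\Lambda$ as a right $\Lambda$-module so that $\Lambda^*$ carries a left $\Lambda$-action.  For $M\in\Lambda\dash\tn{Mod}^\mathcal{D}$, its Pontryagin dual $M^*$ is a pseudocompact right $\Lambda$-module admitting a surjection $\prod_J\Lambda \onto M^*$ from a free pseudocompact module; dualizing gives an embedding $M \incl \bigoplus_J \Lambda^*$, and when $M$ is injective it is a direct summand of this target.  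Since $\Lambda$ is finitely generated over the noetherian ring $R$, $\Lambda$ is itself left noetherian, and Bass' theorem says arbitrary direct sums of abstract injective $\Lambda$-modules are abstract injective.  The problem thus reduces to showing $\Lambda^*$ is abstract injective.

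For this, the key observation is that every abstract $R$-module map $f:\Lambda\to E_R$ is already continuous.  Fix $R$-module generators $x_1,\ldots,x_n$ of $\Lambda$: because $E_R$ is a discrete $R$-module, each $f(x_i)$ is annihilated by some open ideal of $R$, so their finite intersection $I$ is an open ideal with $I\Lambda \subseteq \ker f$.  The submodule $I\Lambda$ is open in the pseudocompact $\Lambda$ (since $\Lambda/I\Lambda$ has finite $R$-length), so $\ker f$ is open and $f$ is continuous.  Thus $\Lambda^* = \tn{Hom}_R^{\tn{abs}}(\Lambda, E_R)$, and the usual abstract hom-tensor adjunction gives a natural isomorphism $\tn{Hom}_\Lambda^{\tn{abs}}(M, \Lambda^*) \iso \tn{Hom}_R^{\tn{abs}}(M, E_R)$.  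Hence $\Lambda^*$ is abstract injective over $\Lambda$ if and only if $E_R$ is abstract injective over $R$, and the latter follows from Matlis' theorem for the commutative (semilocal) complete noetherian pseudocompact ring $R$.

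The main obstacle is this final chain, culminating in the abstract injectivity of $\Lambda^*$; it draws on three distinct ingredients beyond the formal manipulation of derived functors: the finite generation of $\Lambda$ over $R$ (to collapse continuous and abstract $R$-maps into $E_R$), Bass' theorem (to pass from $\Lambda^*$ to arbitrary direct sums), and Matlis' theorem (to ensure $E_R$ is abstract injective over $R$).
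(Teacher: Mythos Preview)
Your argument is correct and follows essentially the same route as the paper's: construct an injective resolution of $N$ in $\Lambda\dash\tn{Mod}^{\mathcal{D}}$ by direct sums of copies of $\Lambda^*$, show these terms are also abstract injective (via finite generation of $\Lambda$ over $R$, the right-adjoint description of $\Lambda^*$, and the abstract injectivity of $E_R$), and note that continuous and abstract $\tn{Hom}$ coincide between discrete modules. The only differences are cosmetic: the paper unpacks the abstract injectivity of $E_R$ directly from Matsumura's Theorems~18.4 and~18.5 rather than naming Matlis, and leaves the Bass--Papp step (direct sums of injectives over the noetherian ring $\Lambda$) more implicit than you do.
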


\begin{proof}
By taking the Pontryagin dual of a free resolution of the pseudocompact module $N^*$, we obtain an injective resolution of $N$ by modules that are sums of modules of the form $\Lambda^* = \tn{Hom}_{R\dash\tn{Mod}}^{\tn{cts}}(\Lambda, E_R)$.  Since $\Lambda$ is finitely generated over $R$, we have $\tn{Hom}_{R\dash\tn{Mod}}^{\tn{cts}}(\Lambda, E_R) = \tn{Hom}_{R\dash\tn{Mod}}^{\tn{abs}}(\Lambda, E_R)$.  The injective hull in $R\dash\tn{Mod}^{\tn{abs}}$ of the module $R/\mathfrak{m}$ ($\mathfrak{m}$ maximal ideal of $R$) is locally finite, since every element is annihilated by some power of $\mathfrak{m}$ by \cite[Thm 18.4]{Matsumura} and $R/\mathfrak{m}^n$ has finite length over $R$ for any $n\in \N$ because $R$ is noetherian.  Hence the injective hulls of $R/\mathfrak{m}$ in the abstract and the discrete categories coincide.  Furthermore, $R$ being noetherian implies that a direct sum of injective modules in the abstract category is injective \cite[Thm 18.5]{Matsumura}.  We conclude that $E_R$ is injective in $R\dash\tn{Mod}^{\tn{abs}}$.

Now, the functor $\tn{Hom}_{R\dash\tn{Mod}}^{\tn{abs}}(\Lambda,-):R\dash\tn{Mod}^{\tn{abs}}\to \Lambda\dash\tn{Mod}^{\tn{abs}}$ is a right adjoint by the theory of abstract modules, and hence by \cite[Prop.\ 2.3.10]{Weibel}, $\Lambda^*$ is injective in $\Lambda\dash\tn{Mod}^{\tn{abs}}$. 

Putting all this together, the above injective resolution of $N$ in $\Lambda\dash\tn{Mod}^{\mathcal{D}}$ is also an injective resolution of $N$ in $\Lambda\dash\tn{Mod}^{\tn{abs}}$.  Furthermore, the hom groups obtained by applying $\tn{Hom}_{\Lambda\dash\tn{Mod}^{\mathcal{D}}}(M,-)$ and $\tn{Hom}_{\Lambda\dash\tn{Mod}^{\tn{abs}}}(M,-)$ to this injective resolution are the same and so the Ext groups are the same. 
\end{proof}

As usual, when $G$ is a finite group we denote by $H^n(G,-)$ the $n$th right derived functor of the fixed point functor $(-)^G$.  We require only the following basic version of the Eckmann-Shapiro Lemma, which can be proved in the usual way given that the induction and coinduction functors coincide for finite groups: When $H$ is a subgroup of $G$ and $V$ is a pseudocompact $RH$-module, then $H^n(H,V) \iso H^n(G,V\ind{G})$ for all $n\geqslant 1$.

\subsection{Permutation modules}\label{subsection permutation modules}

Let $G$ be a profinite group and $R$ a commutative pseudocompact ring.  Let $(X,*)$ be a pointed profinite $G$-space (that is, $X$ is a profinite space on which $G$ acts continuously and such that $g* = *\; \forall g\in G$).  Recall (cf.\ {\cite[1.7]{me}}) that the free pseudocompact $R$-module on $(X,*)$ is a pseudocompact $R$-module $R\db{(X,*)}$ together with a continuous map $\iota : (X,*)\to R\db{(X,*)}$ sending $*$ to 0 and satisfying the following universal property:

Given any pseudocompact $R$-module $M$ and continuous map $\beta:X\to M$ sending $*$ to $0$, there is a unique continuous homomorphism of $R$-modules $\beta':R\db{(X,*)}\to M$ such that $\beta'\iota = \beta$.

\begin{defn}[cf.\ {\cite[Def.\ 1.8]{me}}]
Let $G$ be a profinite group and $R$ a commutative pseudocompact ring.  Let $(X,*)$ be a pointed profinite $G$-space.  The corresponding \emph{pseudocompact permutation module for $G$} is the $R$-module $R\db{(X,*)}$ with action from $G$ given by the action of $G$ on $X$ and the universal property of $R\db{(X,*)}$. 
\end{defn}

Note that the corresponding map $\iota$ is thus $G$-equivariant.  The definition can be succinctly stated as follows: a pseudocompact permutation $R\db{G}$-module is a pseudocompact $R\db{G}$-module having a pointed profinite $G$-invariant $R$-basis.  By construction, a permutation module is free as an $R$-module, so in particular is a lattice.  

\begin{lemma}\label{lemma UP perm module}
The permutation $R\db{G}$-module $R\db{(X,*)}$ is determined by the following universal property:

Given any pseudocompact $R\db{G}$-module $M$ and any continuous $G$-equivariant map $\beta:X\to M$ sending $*$ to $0$, there is a unique continuous $R\db{G}$-module homomorphism $\beta':R\db{(X,*)}\to M$ such that $\beta'\iota = \beta$.
\end{lemma}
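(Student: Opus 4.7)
The plan is to reduce this to the universal property of $R\db{(X,*)}$ as a free pseudocompact $R$-module on the pointed space $(X,*)$, which is given in the definition, and then check that the resulting $R$-module homomorphism is automatically $G$-equivariant.

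First I would establish existence. Given a pseudocompact $R\db{G}$-module $M$ and a continuous $G$-equivariant map $\beta : X \to M$ with $\beta(*)=0$, the underlying pseudocompact $R$-module structure on $M$ together with the $R$-module universal property of $R\db{(X,*)}$ produces a unique continuous $R$-module homomorphism $\beta' : R\db{(X,*)} \to M$ with $\beta' \iota = \beta$. It remains to show $\beta'$ is $G$-equivariant. Fix $g \in G$ and consider the two continuous $R$-module homomorphisms $\beta' \circ g,\ g \circ \beta' : R\db{(X,*)} \to M$. Using that $\iota$ is $G$-equivariant (as noted after the definition of the permutation module) and that $\beta$ is $G$-equivariant, we compute for $x \in X$
\[
(\beta' \circ g)(\iota(x)) = \beta'(\iota(gx)) = \beta(gx) = g \cdot \beta(x) = (g \circ \beta')(\iota(x)).
\]
By the uniqueness clause of the $R$-module universal property applied to the map $x \mapsto \beta(gx)$, we conclude $\beta' \circ g = g \circ \beta'$. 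Since this holds for every $g \in G$, the map $\beta'$ is a continuous $R\db{G}$-module homomorphism.

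Next I would establish uniqueness of $\beta'$ in the $R\db{G}$-setting: if two continuous $R\db{G}$-module homomorphisms $\beta'_1, \beta'_2$ both satisfy $\beta'_i \iota = \beta$, then in particular they are continuous $R$-module homomorphisms extending $\beta$, and so the uniqueness clause of the $R$-module universal property forces $\beta'_1 = \beta'_2$.

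Finally, I would observe that the stated universal property determines $R\db{(X,*)}$ up to unique isomorphism by standard abstract nonsense: if $(N, \iota_N)$ is another pseudocompact $R\db{G}$-module equipped with a continuous $G$-equivariant $\iota_N : X \to N$ sending $*$ to $0$ and satisfying the same universal property, then the universal properties applied in both directions yield continuous $R\db{G}$-module homomorphisms $R\db{(X,*)} \leftrightarrows N$ whose composites, by the uniqueness clauses applied to $\iota$ and $\iota_N$ respectively, must equal the identity maps. I do not anticipate any significant obstacle: the only subtle point is that $\beta'$ produced from the $R$-module universal property is automatically $G$-equivariant, and this is handled cleanly by the uniqueness part of that same universal property applied to the map $x \mapsto \beta(gx)$.
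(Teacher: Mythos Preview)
Your proposal is correct and follows essentially the same approach as the paper: obtain the unique continuous $R$-module homomorphism from the $R$-module universal property of $R\db{(X,*)}$, then use the uniqueness clause of that same property to verify $G$-equivariance. The paper's proof says exactly this in two sentences; you have simply spelled out the details (and added the routine observation that the universal property determines the module up to unique isomorphism).
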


\begin{proof}
That there exists a unique $R$-module homomorphism is the universal property of $R\db{(X,*)}$ as an $R$-module.  Using the universal property again, we see that this map is an $RG$-module homomorphism.
\end{proof}

We give several alternative definitions of a permutation module for a profinite group $G$, which in general are not equivalent.  Given a profinite group $G$ and an (unpointed) profinite $G$-space $X$, the corresponding discrete permutation module $F(X)$ is defined in \cite{symondspermcom1} to be the module of continuous functions $X\to T$, where $T$ is the maximal submodule in $R\dash\tn{Mod}^{\mathcal{D}}$ of the injective hull of $R$ as an abstract $R$-module.  The pseudocompact permutation module $R\db{X}$ in the sense of \cite{symondspermcom1} is the Pontryagin dual of $F(X)$.  This is the same as saying that $R\db{X}$ is the pseudocompact $R\db{G}$-module having profinite $R$-basis $X$.  Note that the modules $R[X]$ and $R[(X\cup\{*\},*)]$ are clearly isomorphic when $X$ is finite.  A profinite $G$-space $X$ (resp.\ profinite pointed $G$-space $(X,*)$) can be expressed as the inverse limit of an inverse system of finite (resp.\ finite pointed) $G$-sets $X_i$ (resp.\ $(X_i,*)$) \cite[Lem.\ 5.6.4]{RZ12010}.  We have that $R\db{X} \iso \invlim R[X_i]$ and $R\db{(X,*)} = \invlim R[(X_i,*)]$.  A \emph{strict} pseudocompact permutation module for $G$ is a module isomorphic to one of the form $\prod_{j\in J} R\db{G/H_j}$, where $J$ is a set and each $H_j$ is a closed subgroup of $G$.

\begin{prop}\label{Prop perm module characterization}
Let $R$ be a commutative local pseudocompact ring with residue class field of characteristic $p$.  Let $G$ be a finite $p$-group and let $U$ be a pseudocompact $RG$-module.  The following are equivalent:
\begin{enumerate}
\item\label{char U perm} $U$ is a permutation module,
\item\label{char U G-inv basis} $U$ has a profinite $G$-invariant $R$-basis,
\item\label{char U strict perm} $U$ is a strict pseudocompact permutation module,
\item\label{char U inv lim of perm} $U$ is an inverse limit of finite rank permutation modules.
\end{enumerate}
\end{prop}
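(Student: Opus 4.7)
The plan is to prove the cycle (3)$\Rightarrow$(1)$\Rightarrow$(4)$\Rightarrow$(3), together with (3)$\iff$(2). The implications (3)$\Rightarrow$(1) and (3)$\Rightarrow$(2) are by direct construction: write $U = \prod_{j\in J}R\db{G/H_j}$ as the inverse limit of its finite sub-products, and exhibit the required (pointed or unpointed) profinite $G$-space as the one-point compactification of $\bigsqcup_j G/H_j$; for (2), any spurious extra summand coming from the compactifying point can be absorbed after re-indexing, provided $J$ is infinite (and if $J$ is finite then $\bigsqcup_j G/H_j$ is already a finite $G$-set). The implications (1)$\Rightarrow$(4) and (2)$\Rightarrow$(4) are immediate: express the profinite basis as $\invlim X_i$ with each $X_i$ finite via \cite[Lem.\ 5.6.4]{RZ12010}, so that $U = \invlim R[X_i]$ (or $\invlim R[(X_i,*)]$), with each finite-level module a direct sum of transitive permutation modules $R[G/H]$ by the orbit decomposition.

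The heart of the proof is the implication (4)$\Rightarrow$(3), where the hypotheses that $G$ is a $p$-group and $R$ has residue characteristic $p$ become essential. My first step is to show that each transitive permutation module $R\db{G/H} = R[G/H]$ is indecomposable with local endomorphism ring. Via Lemma~\ref{Lemma E pseudocompact} and Nakayama this reduces to the analogous statement for the $kG$-module $k[G/H]$, where $k = R/\mathfrak{m}$. Since the only simple $kG$-module is the trivial one (as $G$ is a $p$-group and $\tn{char}(k) = p$), the socle of $k[G/H]$ coincides with $(k[G/H])^G$, which is one-dimensional (spanned by the norm $\sum_{xH\in G/H} xH$). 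A module with one-dimensional socle is indecomposable with local endomorphism ring, and these properties lift to $R[G/H]$.

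Given (4), write $U = \invlim U_i$ with each $U_i$ a finite direct sum of transitive permutation modules. Because $G$ is finite there are only finitely many isomorphism classes of transitive permutation modules, one for each conjugacy class of subgroups of $G$; combined with the previous step, this puts us in the setting of a Krull--Schmidt--Azumaya-type argument. I would construct, for each conjugacy class $[H]$, a cardinal multiplicity $m_{[H]}(U)$ detecting the number of copies of $R[G/H]$ appearing in $U$ (for example as the $k$-dimension of a suitable radical quotient of $\tn{Hom}_{RG}(R[G/H], U/\mathfrak{m}U)$), and then build an isomorphism $U \iso \prod_{[H]} R[G/H]^{m_{[H]}(U)}$ by assembling compatible morphisms at each stage of the inverse system. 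The main obstacle is precisely this upgrade from finite-level Krull--Schmidt to an inverse-limit decomposition, since the transition maps of the system $\{U_i\}$ need not respect the decompositions of the individual $U_i$. A clean approach is to Pontryagin dualize: the problem becomes one of expressing the direct limit $U^* = \dirlim U_i^*$ in $\tn{Mod}^{\mathcal{D}}\dash RG$ as an actual direct sum of discrete duals $(R[G/H])^*$, where the local endomorphism ring hypothesis and the fact that arbitrary sums of injectives in $\tn{Mod}^{\mathcal{D}}$ are injective make a direct-sum Azumaya argument cleaner. Dualizing the resulting decomposition back yields the product decomposition of $U$.
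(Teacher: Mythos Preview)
Your treatment of the easy implications matches the paper: (3)$\Rightarrow$(1) and (3)$\Rightarrow$(2) via one-point compactification of $\bigsqcup_j G/H_j$, and (1),(2)$\Rightarrow$(4) by expressing the profinite (pointed) $G$-basis as an inverse limit of finite $G$-sets. Your socle argument for the indecomposability of each $R[G/H]$ is correct and is exactly where the hypotheses on $R$ and $G$ enter.

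The gap is in (4)$\Rightarrow$(3). Your dual approach appeals to ``sums of injectives in $\tn{Mod}^{\mathcal{D}}\dash RG$ are injective'', but the modules $(R[G/H])^*$ are \emph{not} injective there unless $H=1$: their Pontryagin duals $R[G/H]$ are projective over $RG$ only when $H$ is trivial. So that closure property gives no purchase on the $(R[G/H])^*$. Azumaya-type theorems establish uniqueness of a direct-sum decomposition into local-endomorphism-ring pieces once one exists; they do not manufacture such a decomposition for an arbitrary filtered colimit, and nothing prevents a direct limit of such finite sums in $\tn{Mod}^{\mathcal{D}}\dash RG$ from failing to be one. The paper's mechanism is to change the coefficient algebra: with $M=\bigoplus_{H\leqslant_G G} R[G/H]$ and $E=\tn{End}_{RG}(M)$ (pseudocompact by Lemma~\ref{Lemma E pseudocompact}), the functors $\tn{Hom}_{RG}(M,-)$ and $-\ctens_E M$ give an equivalence $\tn{Add}^{\mathcal{C}}(M)\simeq \tn{Proj}^{\mathcal{C}}\dash E$ (Proposition~\ref{Prop restriction to Add and Proj Equiv}). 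The inverse system $\{U_i\}$ is carried to an inverse system of projective pseudocompact $E$-modules, whose limit is again projective (Proposition~\ref{inverse limit of projectives is projective}); since $-\ctens_E M$ commutes with inverse limits (Lemma~\ref{V commutes with limits}), $U$ lies in $\tn{Add}^{\mathcal{C}}(M)$, and Gabriel's structure theory then writes it as a product of indecomposable summands of $M$, which by your step are exactly the $R[G/H]$. In other words, the injectivity you reached for does hold, but over $E$ rather than over $RG$: the $(R[G/H])^*$ correspond to the indecomposable injectives of $E\dash\tn{Mod}^{\mathcal{D}}$, and that is the category in which your dual argument becomes valid.
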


\begin{proof}
Given a module of the form  $\prod_{j\in J} R[G/H_j]$, choose for each $j\in J$ the basis of left cosets of $H_j$ in $G$, and let $X$ be the union of these bases (a discrete set) compactified at the point $*$.  One now checks that $\prod_{j\in J} R[G/H_j] \iso R\db{(X,*)}$ by observing that the universal property of Lemma \ref{lemma UP perm module} follows from the universal property of the product.  Thus \ref{char U strict perm} implies \ref{char U perm}.  The equivalence of \ref{char U strict perm} and \ref{char U inv lim of perm} follows from \cite[Thm 2.2]{infgen} in case $R$ is profinite (note that \cite[Cor.\ 2.3]{infgen} is not correct as stated, since the module $R[G/H]$ need not be indecomposable when the residue class field of $R$ is not of characteristic $p$).  The same proof works for pseudocompact $R$ (see final remark of \S \ref{Section Change of category}).  As the $G$-space $X$ (resp.\ pointed $G$-space $(X,*)$) is the inverse limit of finite $G$-sets (resp.\ finite pointed $G$-sets), \ref{char U G-inv basis} (resp.\ \ref{char U perm}) implies \ref{char U inv lim of perm}.  We show that \ref{char U strict perm} implies \ref{char U G-inv basis}. As $G$ is finite, we may suppose that $U = \prod_J R[G/H]$ for some fixed subgroup $H$ of $G$ and discrete set $J$.  The result is obvious when $J$ is finite so suppose that $J$ is infinite.  
Let $J^*$ be the one-point compactification of $J$ with point at infinity $*$.  Consider the (unpointed) profinite $G$-space $G/H\times J^*$, where $G$ acts only on the left factor, in the obvious way.  We obtain 
\begin{align*}
R\db{G/H \times J^*} & \iso \invlim_{\substack{F\subseteq J \\ F\hbox{ finite}}}R[G/H\times(*\cup F)] \\
& \iso \invlim_{\substack{F\subseteq J \\ F\hbox{ finite}}} R[G/H] \times \prod_F R[G/H]   \\
& \iso R[G/H] \times \prod_J R[G/H] \\ 
& \iso U \quad (\hbox{since }J\hbox{ is infinite}).
\end{align*}
\end{proof}

Even for finite groups we can not, in general, suppose that the module $R[G/H]$ is indecomposable, and indeed the class of permutation modules need not be closed under summands.  We define an \emph{$R$-permutation module} to be a summand of a permutation module.  For a profinite group $G$, denote by $\tn{Perm}(G)$ the module $\prod_{H\leqslant G}R\db{G/H}$ (subgroups of profinite groups are always taken to be closed).  A strict pseudocompact $R$-permutation module for $G$ is an object of $\tn{Add}^{\mathcal{C}}(\tn{Perm}(G))$.  The second author showed \cite[Cor.\ 3.21]{symondspermcom1} that every $R$-permutation module for a finite group is strict.

\begin{lemma}\label{perm modules coflasque}
Let $G$ be a finite group and suppose that $|G|$ is not a zero divisor of $R$.  If $V$ is a pseudocompact $R$-permutation module then $H^1(H,V)=0$ for all $H\leqslant G$.
\end{lemma}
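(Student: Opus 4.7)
The plan is to reduce the statement to the vanishing of $H^1(K,R)$ with trivial coefficients, where $K$ ranges over subgroups of $G$. First I would invoke the fact recorded in the paper (\cite[Cor.\ 3.21]{symondspermcom1}) that every $R$-permutation module for a finite group is strict: so $V$ is a direct summand of some product $W=\prod_{j\in J}R\db{G/H_j}$ with the $H_j$ subgroups of $G$. Since $G$ is finite, Mackey decomposition shows that $R\db{G/H_j}\res{H}$ is a \emph{finite} direct sum of permutation modules $R[H/L]$ with $L\leqslant H$, so $V\res{H}$ is itself an $R$-permutation module for $H$; thus it suffices to treat the case $H=G$. By additivity of $H^1(G,-)$ on summands, this reduces further to showing $H^1(G,W)=0$.

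Next I would use the fact, recorded earlier in the paper, that $H^1(G,-)=\tn{Ext}^1_{RG}(R,-)$ commutes with arbitrary products in its second variable, giving
\[
H^1(G,W)\iso\prod_{j\in J}H^1(G,R\db{G/H_j}).
\]
For a finite group $G$ the module $R\db{G/H_j}$ coincides with the induced module $R\ind{G}_{H_j}$ of the trivial $RH_j$-module $R$, so the Eckmann-Shapiro Lemma as stated in the paper yields $H^1(G,R\db{G/H_j})\iso H^1(H_j,R)$.

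Finally, $H^1(H_j,R)$ with trivial action is the group of (necessarily continuous, since $H_j$ is finite) homomorphisms $\varphi:H_j\to R^{+}$, and any such $\varphi$ satisfies $|H_j|\varphi(h)=0$ for each $h\in H_j$. Since $|H_j|$ divides $|G|$ and $|G|$ is not a zero divisor of $R$, neither is $|H_j|$, so $\varphi=0$ and $H^1(H_j,R)=0$.

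I do not expect a serious obstacle here: all three key ingredients --- strictness of $R$-permutation modules, commutation of $\tn{Ext}^1$ with arbitrary products in the second variable, and the Eckmann-Shapiro Lemma for finite groups --- are already available from earlier in the paper and apply without modification to arbitrary index sets $J$. The only subtlety to verify is that the Mackey decomposition in the first step produces \emph{finite} direct sums in each factor, so the reduction is compatible with the pseudocompact setting (where in general products do not coincide with direct sums).
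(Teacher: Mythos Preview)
Your proof is correct and follows essentially the same route as the paper's: reduce via Mackey decomposition, additivity (commutation of $\tn{Ext}^1$ with products in the second variable), and the Eckmann--Shapiro Lemma to the computation of $H^1$ with trivial coefficients, then kill this using that $|G|$ is not a zero divisor. The only cosmetic difference is that the paper skips your intermediate reduction to the case $H=G$ and works directly with an arbitrary $H\leqslant G$, arriving at $H^1(H,V)\iso\Hom(H,V)$ for a trivial pseudocompact $V$.
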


\begin{proof}
By the Eckmann-Shapiro Lemma, additivity and the Mackey decomposition formula, we need only check that $H^1(H,V)=0$ for $V$ a trivial pseudocompact $RH$-module.  But $H^1(H,V)$ is isomorphic to group homomorphisms from $H$ to $V$.  If $\rho$ is such a homomorphism then for any $h\in H$ we have
$$0 = \rho(1) = \rho(h^{|G|}) = |G|\rho(h)\implies \rho(h) = 0$$
since $|G|$ is not a zero-divisor.
\end{proof}

Permutation modules of the form $R\db{G/H}$ can be written as $R\ind{G}_H$, the trivial $RH$-module induced up to $G$.  A larger class of modules of interest to us is the class of \emph{monomial} modules (called generalized permutation modules in \cite{weiss}), namely the category of continuous direct summands of direct products of modules of the form $V\ind{G}_H$, where $V$ is an $R$-rank 1 $RH$-lattice.  When $G$ is finite and $R$ is an integral domain there are only finitely many isomorphism classes of lattices of $R$-rank 1 (because $R$ has only finitely many $|G|$th roots of unity), so the category of monomial modules is $\tn{Add}^{\mathcal{C}}(M)$ with $M = \bigoplus_{\substack{H\leqslant G}}V_H\ind{G}_H$, where $V_H$ runs through the set of isomorphism classes of $R$-rank 1 $RH$-lattices.  Notice that  monomial modules are preserved under induction and restriction.

\section{Change of category}\label{Section Change of category}

Recall that $\Lambda\dash\tn{Mod}^{\mathcal{C}}$ has exact inverse limits \cite[Ch.IV, Thm 3]{GabrielThesis} and $\Lambda\dash\tn{Mod}^{\mathcal{D}}$ has exact direct limits.  Let $M$ be a finitely generated object of $\Lambda\dash\tn{Mod}^{\mathcal{C}}$ and $E = \tn{End}_{\Lambda}(M)$, a pseudocompact algebra by Lemma \ref{Lemma E pseudocompact}.  Treating $M$ as a left $E$-module in the obvious way the actions of $\Lambda$ and $E$ are compatible, in so far as $\rho(\lambda m) = \lambda \rho(m)$ for $\rho\in E, \lambda\in \Lambda, m\in M$.  In what are perhaps more familiar terms, this amounts to saying that $M$ is a $\Lambda\dash E^{\tn{op}}$-bimodule.  Consider the following functors:
 \begin{align*}
 U = \tn{Hom}_{\Lambda}(M,-) & : \Lambda\dash\tn{Mod}^{\mathcal{C}} \to \tn{Mod}^{\mathcal{C}}\dash E \\
 V = -\ctens_E M & :\tn{Mod}^{\mathcal{C}}\dash E \to \Lambda\dash\tn{Mod}^{\mathcal{C}}.
 \end{align*}
 
 \begin{lemma}\label{Adjunction Lmod and Emod}
The functor $V$ is left adjoint to $U$.
\end{lemma}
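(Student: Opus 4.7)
The statement is just the tensor-hom adjunction for the $\Lambda$--$E^{\tn{op}}$-bimodule $M$, so the plan is to reduce it to the general adjunction already set up earlier in Section \ref{subsection tensor hom and Ext}. Observe first that the compatibility $\rho(\lambda m)=\lambda\rho(m)$ recorded in the paragraph preceding the statement is exactly what it means for $M$ to be a pseudocompact $\Lambda$--$E^{\tn{op}}$-bimodule, i.e.\ a left $\Lambda$-module and a right $E$-module with commuting actions, and $M$ is finitely generated as a left $\Lambda$-module by hypothesis. This is precisely the setup needed to invoke the displayed adjunction computation given earlier for a bimodule $X$ (with the roles $X=M$ and $\Gamma=E^{\tn{op}}$).

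Concretely, I would fix a pseudocompact right $E$-module $N$ and a pseudocompact left $\Lambda$-module $P$, and exhibit a natural isomorphism
\[
\tn{Hom}_{E}\!\bigl(N,\tn{Hom}_{\Lambda}(M,P)\bigr)\;\iso\;\tn{Hom}_{\Lambda}\!\bigl(N\ctens_E M,\,P\bigr)
\]
by the same inverse-limit argument used in Section \ref{subsection tensor hom and Ext}. Writing $P=\invlim P_i$ with each $P_i$ of finite $R$-length, I would chain the four isomorphisms: (i) $\tn{Hom}_\Lambda(M,P)\iso\invlim\tn{Hom}_\Lambda(M,P_i)$ since $M$ is finitely generated and $\tn{Hom}_\Lambda(M,-)$ commutes with inverse limits in the second variable; (ii) $\tn{Hom}_E(N,-)$ commutes with inverse limits in the second variable; (iii) for each fixed finite-length $P_i$, the adjunction
\[
\tn{Hom}_E\!\bigl(N,\tn{Hom}_\Lambda(M,P_i)\bigr)\iso\tn{Hom}_\Lambda\!\bigl(N\ctens_E M,P_i\bigr)
\]
is the finite-length case \cite[Lem.\ 2.4]{Brumer}; and (iv) reassemble via $\tn{Hom}_\Lambda(N\ctens_E M,-)$ commuting with inverse limits. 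This is the exact same four-step chain as the displayed calculation for $X\ctens_\Gamma-$ in Section \ref{subsection tensor hom and Ext}, with side conventions swapped by viewing right $E$-modules as left $E^{\tn{op}}$-modules.

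Naturality in $N$ and $P$ follows step-by-step since each of the four isomorphisms is natural. The only thing requiring a moment's care is the bookkeeping of side conventions: one must check that the right $E$-module structure on $\tn{Hom}_\Lambda(M,P)$ coming from precomposition (i.e.\ $(f\cdot\rho)(m)=f(\rho m)$) matches up correctly with the left $\Lambda$-module structure on $N\ctens_E M$ coming from the left action on $M$, so that the finite-length adjunction invoked in step (iii) really identifies these particular bimodule actions. This is routine but is where a careless reader might slip; everything else is a direct citation of the material just developed.
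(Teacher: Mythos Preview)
Your proposal is correct and is exactly the paper's approach: the paper's proof consists of the single sentence ``See the discussion after Lemma \ref{Lemma E pseudocompact},'' which refers to precisely the four-step inverse-limit chain you have spelled out, with $X=M$ and $\Gamma=E^{\tn{op}}$. Your added remarks on naturality and on the side-convention bookkeeping are accurate and merely make explicit what the paper leaves to the reader.
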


\begin{proof}
See the discussion after Lemma \ref{Lemma E pseudocompact}.
\end{proof}

\begin{prop}\label{Prop restriction to Add and Proj Equiv}
Restricting the domain and codomain of $U$ and $V$ to $\tn{Add}^{\mathcal{C}}(M)$ and $\tn{Proj}^{\mathcal{C}}\dash E$ yields mutually inverse equivalences of categories.
\end{prop}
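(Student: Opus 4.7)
The plan is to verify that the unit $\eta \colon \id \to UV$ and counit $\varepsilon \colon VU \to \id$ of the adjunction from Lemma \ref{Adjunction Lmod and Emod} restrict to natural isomorphisms on $\tn{Proj}^{\mathcal{C}}\dash E$ and $\tn{Add}^{\mathcal{C}}(M)$ respectively, while simultaneously checking that $U$ and $V$ carry each of these subcategories into the other. On the ``generators'' this is immediate: $U(M) = \tn{End}_\Lambda(M) = E$ and $V(E) = E\ctens_E M \cong M$, and under these identifications $\eta_E$ and $\varepsilon_M$ become the canonical isomorphisms. The remaining work is to extend to arbitrary products, and then to summands thereof.

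The product step for $U$ is immediate, as $\tn{Hom}_\Lambda(M,-)$ commutes with products in its argument: $U(\prod_I M) \cong \prod_I E$ as right $E$-modules. The symmetric statement $V(\prod_I E) \cong \prod_I M$ is the delicate point, since the completed tensor product does not commute with arbitrary products in general. The trick is that in the pseudocompact category the categorical product $\prod_I E$ coincides with the inverse limit $\invlim_F E^F$, where $F$ ranges over the finite subsets of $I$ with the obvious projection transitions; both have the same underlying abelian group, and a basis of open neighbourhoods of $0$ for the product topology is readily seen to agree with one for the inverse limit topology. Since $\ctens_E$ commutes with inverse limits in the first variable and distributes over finite direct sums, we obtain
\[ \Big(\prod_I E\Big) \ctens_E M \;\cong\; \invlim_F \bigl(E^F \ctens_E M\bigr) \;\cong\; \invlim_F M^F \;\cong\; \prod_I M, \]
and under this identification the relevant components of the unit and counit reduce to identities.

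Summands then offer no resistance: both $U$ and $V$, as functors, preserve retracts, so they carry summands of the already-handled products $\prod_I M$ and $\prod_I E$ into one another; and naturality of $\eta$ and $\varepsilon$, combined with their being isomorphisms on those products, forces them to be isomorphisms on every summand as well. This establishes both triangle identities for the restricted functors, hence the equivalence. The only non-formal ingredient is the displayed identification $(\prod_I E) \ctens_E M \cong \prod_I M$, which I expect to be the main obstacle in the write-up; everything else reduces to the basic adjunction and the fact that functors preserve retracts.
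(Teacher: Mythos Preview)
Your proof is correct and follows essentially the same route as the paper's: the paper simply remarks that the standard argument (citing Anderson--Fuller) goes through once one checks that $U$ and $V$ commute with products rather than sums, and refers back to \S\ref{subsection tensor hom and Ext} for this. Your explicit identification $(\prod_I E)\ctens_E M \cong \prod_I M$ via $\prod_I E = \invlim_F E^F$ and the fact that $\ctens$ commutes with inverse limits is exactly the ``observation in \S\ref{subsection tensor hom and Ext}'' the paper has in mind.
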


\begin{proof}
This is more or less standard and the usual proof goes through (see, for instance, \cite[Lem.\ 29.4]{AndersonFuller}).  The only difference is that we must check that the functors $U$ and $V$ commute with products rather than sums.  But this follows from observations in \S \ref{subsection tensor hom and Ext}.
\end{proof}

With $M$ still a finitely generated module in $\Lambda\dash\tn{Mod}^{\mathcal{C}}$, we may dualize the functors above to obtain
\begin{align*}
U' = \tn{Hom}_{\Lambda}(M,(-)^*)^* 
= \tn{Hom}_{\Lambda}(-,M^*)^* 
& : \tn{Mod}^{\mathcal{D}}\dash\Lambda \to E\dash\tn{Mod}^{\mathcal{D}} \\
V' = ((-)^*\ctens_E M)^* 
= \tn{Hom}_E(M,-)& : E\dash\tn{Mod}^{\mathcal{D}} \to \tn{Mod}^{\mathcal{D}}\dash\Lambda.
 \end{align*}

\begin{lemma}\label{Adjunction dual modL and modE}
The functor $V'$ is right adjoint to $U'$.
\end{lemma}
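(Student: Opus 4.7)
The plan is to obtain the adjunction $U' \dashv V'$ from the adjunction $V \dashv U$ of Lemma \ref{Adjunction Lmod and Emod} by applying Pontryagin duality. Recall that Pontryagin duality induces contravariant equivalences $\Lambda\dash\tn{Mod}^{\mathcal{C}}\simeq(\tn{Mod}^{\mathcal{D}}\dash\Lambda)^{\tn{op}}$ and $\tn{Mod}^{\mathcal{C}}\dash E\simeq(E\dash\tn{Mod}^{\mathcal{D}})^{\tn{op}}$ (using Lemma \ref{Lemma E pseudocompact} to know that $E$ is pseudocompact so that the duality applies). The definitions in the excerpt already exhibit $U'$ and $V'$ as $U' = U((-)^*)^*$ and $V' = V((-)^*)^*$, so they are precisely the transports of $U$ and $V$ along these dualities.

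Concretely, I would fix $A\in \tn{Mod}^{\mathcal{D}}\dash\Lambda$ and $B\in E\dash\tn{Mod}^{\mathcal{D}}$, set $D = A^*\in \Lambda\dash\tn{Mod}^{\mathcal{C}}$ and $C = B^*\in \tn{Mod}^{\mathcal{C}}\dash E$, and write the chain
\begin{align*}
\tn{Hom}_{E\dash\tn{Mod}^{\mathcal{D}}}(U'(A),B)
&= \tn{Hom}_{E\dash\tn{Mod}^{\mathcal{D}}}(U(D)^*, C^*) \\
&\iso \tn{Hom}_{\tn{Mod}^{\mathcal{C}}\dash E}(C,U(D)) \\
&\iso \tn{Hom}_{\Lambda\dash\tn{Mod}^{\mathcal{C}}}(V(C),D) \\
&\iso \tn{Hom}_{\tn{Mod}^{\mathcal{D}}\dash\Lambda}(D^*, V(C)^*) \\
&= \tn{Hom}_{\tn{Mod}^{\mathcal{D}}\dash\Lambda}(A, V'(B)),
\end{align*}
where the first and last displayed isomorphisms are instances of Pontryagin duality, and the middle one is the adjunction from Lemma \ref{Adjunction Lmod and Emod}. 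Each step is natural in both variables, so composing them yields the required adjunction isomorphism.

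The only delicate point is making sure the functors $U'$ and $V'$ actually land in the stated categories: $U(D) \in \tn{Mod}^{\mathcal{C}}\dash E$ so $U(D)^*\in E\dash\tn{Mod}^{\mathcal{D}}$, and $V(C)\in \Lambda\dash\tn{Mod}^{\mathcal{C}}$ so $V(C)^*\in \tn{Mod}^{\mathcal{D}}\dash\Lambda$, which are consistent with the stated types. The alternative descriptions $\tn{Hom}_{\Lambda}(-,M^*)^*$ for $U'$ and $\tn{Hom}_E(M,-)$ for $V'$ should be verified separately: the first follows from the standard identification $\tn{Hom}_{\Lambda}(M,A^*)\iso\tn{Hom}_{\Lambda}(A\ctens_R M,R)\iso \tn{Hom}_{\Lambda}(A,M^*)$ (recast in terms of the dualizer $E_R$), and the second follows from the adjunction between completed tensor and continuous Hom already recalled in \S\ref{subsection tensor hom and Ext}. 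I expect no serious obstacle: everything reduces to transporting a formal adjunction across a duality, with the main care being a consistent bookkeeping of left/right module structures between $\Lambda$ and $E$.
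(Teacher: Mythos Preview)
Your proposal is correct and is precisely the argument the paper has in mind: the paper gives no proof for this lemma, treating it as immediate by Pontryagin duality from Lemma \ref{Adjunction Lmod and Emod}, and your chain of isomorphisms simply spells this out. The only remark is that the side comments at the end about the alternative descriptions of $U'$ and $V'$ are not needed for the lemma itself---those identifications are part of the definitions preceding the lemma, not part of its proof.
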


\begin{lemma}\label{Lemma LAPC and RAPL for Vs}
The functor $V$ commutes with direct limits and the functor $V'$ commutes with inverse limits.
\end{lemma}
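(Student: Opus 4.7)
The plan is to deduce both assertions from the general principle that a left adjoint preserves colimits and a right adjoint preserves limits. By Lemma \ref{Adjunction Lmod and Emod}, the functor $V = -\ctens_E M$ is left adjoint to $U$, so $V$ preserves any colimit that exists in $\tn{Mod}^{\mathcal{C}}\dash E$; in particular it commutes with direct limits. Dually, by Lemma \ref{Adjunction dual modL and modE} the functor $V' = \tn{Hom}_E(M,-)$ is right adjoint to $U'$, so $V'$ preserves any limit existing in $E\dash\tn{Mod}^{\mathcal{D}}$, and in particular commutes with inverse limits.

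The only thing that needs a moment's thought is that these limits and colimits actually exist in the relevant categories. Inverse limits in $E\dash\tn{Mod}^{\mathcal{D}}$ exist because, being a filtered limit of modules, one may take the set-theoretic limit and observe that it is a union of modules of finite length over $R$, hence again in $E\dash\tn{Mod}^{\mathcal{D}}$. Direct limits in $\tn{Mod}^{\mathcal{C}}\dash E$ exist by Pontryagin duality: a directed system in $\tn{Mod}^{\mathcal{C}}\dash E$ corresponds under $(-)^*$ to an inverse system in $E\dash\tn{Mod}^{\mathcal{D}}$, whose limit exists and whose Pontryagin dual serves as the direct limit of the original system.

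I expect no real obstacle here: the statement is essentially the formal consequence of adjunction already recorded. The only subtlety — which is completely routine — is being careful that the direct limit in the pseudocompact category is not computed naively as a colimit of underlying abelian groups, but is the one characterized by the appropriate universal property in $\tn{Mod}^{\mathcal{C}}\dash E$; precisely because $V$ is a left adjoint, this is the colimit it preserves.
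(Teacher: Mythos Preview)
Your argument is correct and matches the paper's exactly: the authors simply remark that these are formal properties of adjoint functors and cite \cite[Thm 2.6.10]{Weibel}. One small slip in your aside: the inverse limit in $E\dash\tn{Mod}^{\mathcal{D}}$ is not the set-theoretic limit itself but its locally finite part (cf.\ the paper's Section~4), though this does not affect your proof since only existence is needed.
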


\begin{proof}
These are formal properties of adjoint functors, see for instance \cite[Thm 2.6.10]{Weibel}.
\end{proof}

Again, by duality, restricting the domain and codomain, we obtain an equivalence of categories between $\tn{Add}^{\mathcal{D}}(M^*)$ and $E\dash\tn{Inj}^{\mathcal{D}}$.

These equivalences allow one to deduce properties of $\tn{Add}(M)$ from already understood properties of $E\dash\tn{Proj}^{\mathcal{C}}$ or $\tn{Inj}^{\mathcal{D}}\dash E$.  For example, from \cite[Ch.III, Cor.1 to Thm 3]{GabrielThesis} it follows that every module in $\tn{Add}^{\mathcal{C}}(M)$ is a product of indecomposable summands of $M$.  Furthermore, from  \cite[Ch.IV, Thm.2]{GabrielThesis} this decomposition is essentially unique.  Indeed, $\tn{Add}^{\mathcal{C}}(M)$ has the \emph{exchange property}: 

\begin{prop}[\tn{\cite[Ch. IV, Prop. 8]{GabrielThesis}}]\label{prop exchange property}
Let $X$ be a continuous direct summand of the pseudocompact $\Lambda$-module $Y = \prod_{i\in I} Y_i$, where each $Y_i$ is an indecomposable summand of the finitely generated pseudocompact $\Lambda$-module $M$.  There exists a subset $J$ of $I$ such that
$$Y = X \oplus \prod_{i\in J}Y_i.$$
\end{prop}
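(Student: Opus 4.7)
The plan is to use the equivalence of Proposition \ref{Prop restriction to Add and Proj Equiv} to translate the statement into an analogous exchange property for projective pseudocompact right $E$-modules, which can then be proved by an Azumaya-type argument.

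Applying $U = \tn{Hom}_{\Lambda}(M,-)$, which commutes with products by \S \ref{subsection tensor hom and Ext}: the product decomposition $Y = \prod_i Y_i$ transports to $U(Y) = \prod_i U(Y_i)$, with each $U(Y_i)$ an indecomposable projective pseudocompact right $E$-module (being a summand of $U(M) \iso E$ corresponding to the summand $Y_i$ of $M$); and $U(X)$ is a continuous direct summand of $U(Y)$. Any valid choice of $J$ on the $E$-module side transports back via $V$ to the desired decomposition of $Y$, so it suffices to prove the exchange property inside $\tn{Proj}^{\mathcal{C}}\dash E$.

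The standard Azumaya-style argument then applies. First, indecomposability of $U(Y_i)$ together with the existence of projective covers in $\tn{Mod}^{\mathcal{C}}\dash E$ forces $\tn{End}_E(U(Y_i))$ to be a local pseudocompact ring (idempotents lift modulo the radical, and a nontrivial idempotent in $\tn{End}_E(U(Y_i))$ would split off a summand). Write the given splitting as $U(Y) = U(X) \oplus Z$ with projection $\pi$ and section $\sigma$, and for each $i$ form the composite $\alpha_i : U(Y_i) \incl U(Y) \xrightarrow{\sigma\pi} U(Y) \onto U(Y_i)$. Since $\tn{End}_E(U(Y_i))$ is local, $\alpha_i$ is either an automorphism or lies in its radical. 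Letting $J$ be the set of $i$ for which $\alpha_i$ lies in the radical, one checks that the natural map $U(X) \oplus \prod_{i \in J} U(Y_i) \to U(Y)$ is surjective modulo the radical and hence an isomorphism by Nakayama's lemma for pseudocompact modules (\cite[Lem.\ 1.4]{Brumer}).

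The main obstacle is that the classical Azumaya argument is written for direct sum decompositions, where elements have finite support and coordinatewise reasoning is straightforward, whereas here the decomposition is into a product over an arbitrary index set. The substitute for finite support is exactness of inverse limits in $\tn{Mod}^{\mathcal{C}}\dash E$ combined with the pseudocompact Nakayama lemma: these allow the coordinatewise assignment $i \mapsto (\alpha_i \text{ unit or radical})$ to be assembled into a global splitting. This is essentially the approach developed in \cite[Ch.\ IV]{GabrielThesis}.
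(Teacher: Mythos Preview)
Your overall strategy---transport to $\tn{Proj}^{\mathcal{C}}\dash E$ via the equivalence of Proposition~\ref{Prop restriction to Add and Proj Equiv} and invoke the exchange property there---is exactly what the paper does. The paper does not reprove the result on the $E$-side; it simply cites Gabriel \cite[Ch.~IV, Prop.~8]{GabrielThesis} for that step.

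However, your sketched Azumaya argument on the $E$-side has a genuine gap: the set $J$ you define does not do what you claim. Take $E=k$ a field, $U(Y)=k\times k$ with $U(Y_1)=k\times 0$, $U(Y_2)=0\times k$, and $U(X)$ the diagonal $\{(a,a)\}$ with complement $Z=\{(a,-a)\}$. For either interpretation of $\sigma\pi$ (projection onto $U(X)$ or onto $Z$), both $\alpha_1$ and $\alpha_2$ are multiplication by a nonzero scalar, hence units. Your recipe gives $J=\varnothing$, and the map $U(X)\to U(Y)$ is certainly not surjective (even modulo the radical, which here is zero). The point is that the exchange property cannot be realized by a ``diagonal'' criterion looking only at the $i$th component of the $i$th factor: in general there is no canonical $J$, and any correct argument must make successive choices. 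The standard proofs (Azumaya for direct sums, Gabriel for pseudocompact products) proceed by a transfinite or inductive construction, peeling off one factor at a time and using locality of the endomorphism rings at each step to decide whether that factor goes into $J$ or is absorbed into $X$. Your final paragraph correctly flags that products rather than sums are the issue, but Nakayama alone does not rescue the argument; the error is already visible in the two-factor case.

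In short: the reduction is fine and agrees with the paper, but you should cite Gabriel for the exchange property in $\tn{Proj}^{\mathcal{C}}\dash E$ rather than sketch an argument that does not work as stated.
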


\begin{lemma}[\tn{\cite[Lem.\ A.4]{Brumer}}]\label{V commutes with limits}
The functor $V:\tn{Mod}^{\mathcal{C}}\dash E \to \Lambda\dash\tn{Mod}^{\mathcal{C}}$ commutes with inverse limits.
\end{lemma}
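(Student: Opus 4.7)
My plan is to derive the result immediately from the fact, already recorded in \S\ref{subsection tensor hom and Ext}, that the completed tensor product commutes with inverse limits in each of its variables \cite[Lem.\ 5.5.2]{RZ12010}. Concretely, $V(-) = -\ctens_E M$, so for any inverse system $(N_\alpha)$ of pseudocompact right $E$-modules with $N = \invlim N_\alpha$, this fact yields
$$V(N) \iso N \ctens_E M \iso \invlim_\alpha (N_\alpha \ctens_E M) \iso \invlim_\alpha V(N_\alpha),$$
which is exactly the claim.

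The one preliminary I need to check is that $M$ genuinely qualifies as the second argument of a completed tensor product over $E$, i.e.\ that $M$ is pseudocompact as a right $E$-module (not just as a left $\Lambda$-module). For this I would revisit the construction of the pseudocompact topology on $E$ from Lemma \ref{Lemma E pseudocompact}: writing $\Lambda = \invlim \Lambda/I_i$, one obtains the open two-sided ideals $J_i = \{\rho\in E : \rho(M)\subseteq \tn{Cl}(I_iM)\}$ of $E$ with $E = \invlim E/J_i$. The corresponding filtration $\{\tn{Cl}(I_iM)\}$ of $M$ is a fundamental system of open $R$-submodules stable under the right $E$-action, and each finite quotient $M/\tn{Cl}(I_iM)$ is a right $E/J_i$-module of finite $R$-length. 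Assembling these, $M$ is pseudocompact as a right $E$-module, so the $\Lambda\dash E^{\tn{op}}$-bimodule structure recorded at the start of \S\ref{Section Change of category} is pseudocompact on both sides, and the cited machinery applies.

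I do not anticipate a serious obstacle beyond this bookkeeping. What might look surprising, in view of Lemma \ref{Adjunction Lmod and Emod} saying that $V$ is a \emph{left} adjoint, is that $V$ commutes with inverse limits rather than merely with direct limits; this is the non-formal content of the statement, but it is wholly contained in the special behaviour of $\ctens$ and so requires no new argument once the pseudocompact bimodule structure on $M$ has been verified.
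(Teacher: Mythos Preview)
Your argument is correct: the paper itself does not supply a proof but simply cites \cite[Lem.\ A.4]{Brumer}, and Brumer's proof there is exactly the one you give---the completed tensor product commutes with inverse limits in each variable, so $V=-\ctens_E M$ does too, once one knows that $M$ is pseudocompact over $E$. Your verification of this last point via the filtration $\{\tn{Cl}(I_iM)\}$ from Lemma~\ref{Lemma E pseudocompact} is sound; just note that in the paper's conventions $M$ is a \emph{left} $E$-module (equivalently a right $E^{\tn{op}}$-module), not a right $E$-module as you wrote, though this does not affect the argument.
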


\begin{prop}[\tn{\cite[Cor.\ 3.3]{Brumer}, \cite[Ch.II, Cor.1 to Thm 2]{GabrielThesis}}]\label{inverse limit of projectives is projective}
An inverse limit of projective modules in $\Lambda\dash\tn{Mod}^{\mathcal{C}}$ is again a projective module.
\end{prop}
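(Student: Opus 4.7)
The plan is to reduce the assertion to a dual statement about direct limits of injective discrete modules via Pontryagin duality, and then invoke a classical Gabriel-type result for locally Noetherian Grothendieck categories.

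First, Pontryagin duality $M \mapsto M^*$ gives a contravariant equivalence between $\Lambda\dash\tn{Mod}^{\mathcal{C}}$ and $\tn{Mod}^{\mathcal{D}}\dash\Lambda$ that interchanges projectives with injectives and inverse limits with direct limits, so $(\invlim_i P_i)^* \iso \dirlim_i P_i^*$. Consequently, it suffices to prove that a direct limit of injectives in $\tn{Mod}^{\mathcal{D}}\dash\Lambda$ is injective. Any small inverse limit may be assembled from products and equalizers, and products of projectives are already known to be projective by \cite[Cor.\ 3.3]{Brumer}, so the essential new ingredient is to handle filtered (directed) systems.

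Second, I would verify that $\tn{Mod}^{\mathcal{D}}\dash\Lambda$ is a locally Noetherian Grothendieck category: every discrete module is the directed union of its submodules of finite $R$-length, and each such finite-length submodule is Noetherian. With this in hand the standard fact applies: in a locally Noetherian Grothendieck category, filtered colimits of injective objects are injective. The proof of that fact proceeds by a Baer-type extension argument—to extend a map $N' \to \dirlim I_\lambda$ from an inclusion $N' \incl N$ of Noetherian objects, observe that $\tn{Hom}(N',-)$ commutes with filtered colimits because $N'$ is Noetherian, so the map factors through some $I_\lambda$; extend via injectivity of $I_\lambda$ and push the extension back into the colimit.

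The main obstacle, in my view, is not a hidden difficulty but the careful identification of ``Noetherian'' in the abstract categorical sense with ``of finite $R$-length'' in $\tn{Mod}^{\mathcal{D}}\dash\Lambda$, together with the confirmation that the finite-$R$-length modules form a generating set of Noetherian objects. Once this bookkeeping is done, the proposition follows by combining the product case \cite[Cor.\ 3.3]{Brumer} with the filtered colimit case \cite[Ch.II, Cor.1 to Thm 2]{GabrielThesis}, dualized back through Pontryagin duality to the pseudocompact category.
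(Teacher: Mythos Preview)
The paper does not supply its own proof of this proposition; it simply records it as a citation to Brumer and Gabriel. Your core argument---dualize via Pontryagin duality and invoke the fact that $\tn{Mod}^{\mathcal{D}}\dash\Lambda$ is a locally Noetherian Grothendieck category, so that filtered colimits of injectives are injective---is correct and is essentially Gabriel's argument behind the cited corollary.

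One point deserves correction. Your digression about assembling an arbitrary small inverse limit from products and equalizers is both unnecessary and misleading. In this paper $\invlim$ means a (co)filtered inverse limit, not an arbitrary categorical limit, and the statement is in fact \emph{false} for equalizers: the equalizer of two maps between projectives is a kernel, and kernels of maps between projective pseudocompact $\Lambda$-modules need not be projective (take $\Lambda = k[x]/(x^2)$ and the kernel of multiplication by $x$, which is the simple socle). So the product case from Brumer and the filtered case from Gabriel are not two ingredients to be combined into a proof for arbitrary limits; rather, Brumer's product result is a special instance of the filtered statement (a product is the limit over a discrete, hence cofiltered, index set), and your filtered-colimit argument in the dual category already covers everything that is being claimed. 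Drop the equalizer remark and the proof is clean.
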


\begin{theorem}\label{thm limits of Add are in Add}
Let $\Lambda$ be a pseudocompact $R$-algebra and $M$ a finitely generated pseudocompact left $\Lambda$-module.
The inverse limit in $\Lambda\dash\tn{Mod}^{\mathcal{C}}$ of modules in $\tn{Add}^{\mathcal{C}}(M)$ is again an object of $\tn{Add}^{\mathcal{C}}(M)$.
\end{theorem}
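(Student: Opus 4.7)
The plan is to transport the problem through the equivalence of categories established in Proposition \ref{Prop restriction to Add and Proj Equiv}, where the analogous statement about inverse limits of projectives is exactly Proposition \ref{inverse limit of projectives is projective}.

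Let $(X_\alpha)_{\alpha \in A}$ be an inverse system in $\tn{Add}^{\mathcal{C}}(M)$. First, I would apply the functor $U = \tn{Hom}_{\Lambda}(M,-)$ termwise to obtain an inverse system $(U(X_\alpha))$ in $\tn{Mod}^{\mathcal{C}}\dash E$. By the equivalence of Proposition \ref{Prop restriction to Add and Proj Equiv}, each $U(X_\alpha)$ is a projective object of $\tn{Mod}^{\mathcal{C}}\dash E$. Form the inverse limit $P := \invlim_\alpha U(X_\alpha)$ in $\tn{Mod}^{\mathcal{C}}\dash E$; Proposition \ref{inverse limit of projectives is projective} (applied to the pseudocompact algebra $E$, which is pseudocompact by Lemma \ref{Lemma E pseudocompact}) ensures that $P$ is projective.

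Next, I would apply $V$. By Lemma \ref{V commutes with limits}, $V$ commutes with inverse limits, and by Proposition \ref{Prop restriction to Add and Proj Equiv} the composite $V \circ U$ is naturally isomorphic to the identity on $\tn{Add}^{\mathcal{C}}(M)$. Therefore
\[
V(P) \;=\; V\bigl(\invlim_\alpha U(X_\alpha)\bigr) \;\iso\; \invlim_\alpha V(U(X_\alpha)) \;\iso\; \invlim_\alpha X_\alpha,
\]
where the final inverse limit is computed in $\Lambda\dash\tn{Mod}^{\mathcal{C}}$ (this uses that $V$, being a left adjoint on the restricted categories and equal to a completed tensor product, lands in pseudocompact $\Lambda$-modules, and the limit on the right is then forced to agree with the one in the ambient category). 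Since $P$ is projective in $\tn{Mod}^{\mathcal{C}}\dash E$, the restricted equivalence of Proposition \ref{Prop restriction to Add and Proj Equiv} tells us $V(P) \in \tn{Add}^{\mathcal{C}}(M)$, and the proof is complete.

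There is no serious obstacle here: all the work has been done in setting up the machinery. The only point that requires a moment's care is the identification of the inverse limit of the $X_\alpha$ computed in the ambient category $\Lambda\dash\tn{Mod}^{\mathcal{C}}$ with the image under $V$ of the limit of $U(X_\alpha)$, but this follows from $VU\iso \id$ on $\tn{Add}^{\mathcal{C}}(M)$ together with the commutation of $V$ with inverse limits; equivalently, since $V$ is a left adjoint (Lemma \ref{Adjunction Lmod and Emod}) mapping into a complete category, any inverse limit it produces is automatically the correct one.
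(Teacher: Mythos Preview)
Your argument is correct and is essentially identical to the paper's own proof: apply $U$ to get an inverse system of projectives, take the limit (projective by Proposition~\ref{inverse limit of projectives is projective}), then apply $V$ and use Lemma~\ref{V commutes with limits} together with $VU\iso\id$ on $\tn{Add}^{\mathcal{C}}(M)$.

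One small correction to your final parenthetical: being a \emph{left} adjoint guarantees preservation of colimits, not limits, so the aside ``since $V$ is a left adjoint \ldots any inverse limit it produces is automatically the correct one'' is not a valid alternative justification. Fortunately you do not need it, since Lemma~\ref{V commutes with limits} already gives exactly what you want.
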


\begin{proof}
The proof of \cite[Thm 2.2]{infgen} works in this greater generality, but this also follows easily from the results above: an inverse system in $\tn{Add}^{\mathcal{C}}(M)$ yields, by applying $U$, an inverse system of projective modules, whose limit is projective by Proposition \ref{inverse limit of projectives is projective}.  Applying $V$ to this limit we obtain an object of $\tn{Add}^{\mathcal{C}}(M)$.  But $V$ commutes with inverse limits by Lemma \ref{V commutes with limits}, and hence this module is the inverse limit of the original inverse system. 
\end{proof}

\noindent \emph{Remark:} This result is false without the topology.  In \cite{Bergman}, Bergman shows that any abstract module can be expressed as an inverse limit of injective modules.  This is most striking when $\Lambda$ is self-injective (for instance, when $R$ is a field and $\Lambda$ is the group algebra of a finite group) \cite[\S 1.6]{Benson}.  In this case projective modules are the same as injective modules, so that every abstract module is an inverse limit of projective modules.

\medskip

\noindent \emph{Remark:} Let $R$ be a finite unramified extension of the $p$-adic integers $\Z_p$ and let $G$ be a finite group.  When $G$ is cyclic of order $p$ or $p^2$ it is known that there are only finitely many isomorphism classes of finitely generated indecomposable $RG$-lattices \cite[Thm 33.7, Cor. 33.3a]{CR vol1}.  It follows that for general finite $G$ there are finitely many isomorphism classes of indecomposable finitely generated $RG$-lattices that are projective relative to subgroups of order $p$ or $p^2$ (for the definition of relative projectivity see after Lemma \ref{Lemma db Q killed for non conjugate perm module}, for further details see for example \cite[\S 3.6]{Benson}, or \cite{rejprojprofinite} for profinite modules).  Denote by $M_G$ the sum of (a representative of each isomorphism class of) the indecomposable $RG$-lattices projective relative to cyclic subgroups of order $p$ or $p^2$.  If $G$ is cyclic of order $p$ or $p^2$, the category $\tn{add}(M_G)$ coincides with the category of finitely generated $RG$-lattices.  As pseudocompact $RG$-lattices are inverse limits of finite rank $RG$-lattices by Lemma \ref{RG lattice limit of finite rank RG lattices}, Theorem \ref{thm limits of Add are in Add} tells us that compact $RG$-lattices are summands of products of indecomposables, hence products of indecomposables by the exchange property (Proposition \ref{prop exchange property}).
For general $G$, a compact $RG$-lattice $L$ that is projective relative to cyclic subgroups of order $p$ or $p^2$ is a direct summand of 
$$\prod_{\substack{C\leqslant G \\ C\iso C_p\hbox{ or }C_{p^2}}}L\res{C}\ind{G}$$ 
and hence (by the theorem) is an object of $\tn{Add}(M_G)$.

\medskip

\noindent \emph{Remark:} Every profinite $G$-set $X$ can be expressed as an inverse limit of finite $G$-sets $X = \invlim X_i$ \cite[Lem.\ 5.6.4(a)]{RZ12010} and hence $R\db{X} = \invlim R[X_i]$.  Theorem \ref{thm limits of Add are in Add} now yields a different proof of the result of \cite[Corol.\ 3.21]{symondspermcom1}: a pseudocompact permutation module can be expressed as a summand of a product of modules isomorphic to $R[G/H]$ ($H\leqslant G$).

\section{Other limits}

We have observed that $\Lambda\dash\tn{Mod}^\mathcal{C}$ has exact inverse limits and dually that $\Lambda\dash\tn{Mod}^\mathcal{D}$ has exact direct limits.  It is interesting to note that $\Lambda\dash\tn{Mod}^\mathcal{C}$ also has direct limits and $\Lambda\dash\tn{Mod}^\mathcal{D}$ has inverse limits (though of course, they are not exact) \cite[Ch.II, Cor.\ 2 to Thm 1]{GabrielThesis}.  Inverse limits in $\Lambda\dash\tn{Mod}^\mathcal{D}$ are obtained by taking the inverse limit in $\Lambda\dash\tn{Mod}^{\tn{abs}}$, giving it the discrete topology and then taking the submodule generated by elements annihilated by an open ideal of $\Lambda$.  Direct limits in $\Lambda\dash\tn{Mod}^\mathcal{C}$ are described in \cite[\S 2]{Corob Cook Bieri}.  Note that these limits do not in general commute with restriction to subgroups.

\medskip

By definition, $\Lambda = \invlim \Lambda/I_j$ for some set of open ideals $I_j$ of $\Lambda$ with each $\Lambda/I_j$ an $R$-algebra of finite length over $R$.  Given such an ideal $I$ and a module $M$ (compact, discrete torsion or abstract), denote by $T^IM$ the set $\{m\in M\,|\,Im=0\}$.  Note that $T^I\invlim^{\mathcal{D}} = \invlim^{\mathcal{D}}T^I$.  Let $TM$ denote the union of the $T^IM$ as $I$ varies over the open ideals of $\Lambda$.

\begin{prop}\label{prop wrong way limits of projectives and injectives}
A direct limit in $\Lambda\dash\tn{Mod}^\mathcal{C}$ of projective modules in $\Lambda\dash\tn{mod}^\mathcal{C}$ is again projective.  An inverse limit in $\Lambda\dash\tn{Mod}^\mathcal{D}$ of injective modules in $\Lambda\dash\tn{mod}^\mathcal{D}$ is again injective.
\end{prop}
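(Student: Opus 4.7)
The plan is to prove the first statement directly and then deduce the second by Pontryagin duality. Let $\{P_i\}_{i \in I}$ be a direct system of finitely generated projective modules in $\Lambda\dash\tn{mod}^{\mathcal{C}}$, with direct limit $P = \dirlim P_i$ computed in $\Lambda\dash\tn{Mod}^{\mathcal{C}}$. To show $P$ is projective, I will verify that $\Hom_{\Lambda}(P,-)$ is exact on short exact sequences in $\Lambda\dash\tn{Mod}^{\mathcal{C}}$. Fix such a sequence $0 \to A \to B \to C \to 0$. Since each $P_i$ is projective, applying $\Hom_{\Lambda}(P_i,-)$ gives a short exact sequence; because $P_i$ is finitely generated and the target is pseudocompact, each $\Hom_{\Lambda}(P_i, N)$ is a pseudocompact $R$-module (\S\ref{subsection tensor hom and Ext}), and the transition maps as $i$ varies are continuous.

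The universal property of $P = \dirlim^{\mathcal{C}} P_i$ identifies $\Hom_{\Lambda}(P, N)$ with $\invlim_i \Hom_{\Lambda}(P_i, N)$ for every $N \in \Lambda\dash\tn{Mod}^{\mathcal{C}}$. Since inverse limits are exact in the category of pseudocompact $R$-modules (\cite[Ch.IV, Thm 3]{GabrielThesis}), passing to the limit over $i$ in the above family of short exact sequences yields
$$0 \to \Hom_{\Lambda}(P, A) \to \Hom_{\Lambda}(P, B) \to \Hom_{\Lambda}(P, C) \to 0,$$
establishing that $P$ is projective.

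For the dual statement, let $\{E_i\}$ be an inverse system of injectives in $\Lambda\dash\tn{mod}^{\mathcal{D}}$. Pontryagin duality converts it into a direct system $\{E_i^*\}$ of finitely generated projectives in $\tn{mod}^{\mathcal{C}}\dash\Lambda$, and interchanges limits with colimits, so $(\invlim^{\mathcal{D}} E_i)^* \iso \dirlim^{\mathcal{C}} E_i^*$. By the first part the right-hand side is projective, whence its Pontryagin dual $\invlim^{\mathcal{D}} E_i$ is injective in $\Lambda\dash\tn{Mod}^{\mathcal{D}}$.

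The main obstacle is conceptual rather than technical: the analogous statement fails badly for abstract modules (e.g.\ $\Q$ over $\Z$ is a direct limit of free modules but is not projective), so the proof must genuinely exploit the pseudocompact setting. The essential inputs are exactness of inverse limits for pseudocompact $R$-modules together with the fact that $\Hom_{\Lambda}(P_i, N)$ is itself pseudocompact when $P_i$ is finitely generated; without both, the natural identification $\Hom_{\Lambda}(\dirlim P_i,-) \iso \invlim \Hom_{\Lambda}(P_i,-)$ would not land in a category where inverse limits are exact.
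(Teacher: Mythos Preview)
Your proof is correct and takes a genuinely different route from the paper's. The paper proves the injective statement first, initially for $\Lambda$ of finite $R$-length: it treats the inverse system as living in $\tn{add}^{\mathcal{C}}(\Lambda^*)$, invokes Theorem~\ref{thm limits of Add are in Add} to get the limit inside $\tn{Add}^{\mathcal{C}}(\Lambda^*)$, and then uses Chase's theorem on direct products of modules to identify this with an object of $\tn{Add}^{\mathcal{D}}(\Lambda^*)$ after forgetting the topology. The general case is then obtained via the torsion functors $T^{I}$ and a passage to $\Lambda/I_i$. Your argument, by contrast, bypasses the $\tn{Add}^{\mathcal{C}}(M)$ machinery entirely: the universal property of the colimit gives $\Hom_{\Lambda}(P,N)\cong\invlim_i\Hom_{\Lambda}(P_i,N)$, and since each $P_i$ is finitely generated the system on the right lives in $R\dash\tn{Mod}^{\mathcal{C}}$, where inverse limits are exact; this immediately yields exactness of $\Hom_{\Lambda}(P,-)$. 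Your approach is shorter and more elementary, needing neither Theorem~\ref{thm limits of Add are in Add} nor Chase's theorem. The paper's route, on the other hand, produces more: it actually identifies the limit concretely as a product of indecomposable injectives (via the structure of $\tn{Add}^{\mathcal{C}}(\Lambda^*)$), which is closer in spirit to the surrounding results and to the subsequent theorem on limits in $\tn{Add}(M)$.
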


\begin{proof}
We prove the second statement.  First suppose that $\Lambda$ has finite length over $R$.  An inverse system in $\tn{add}^{\mathcal{D}}(\Lambda^*)$ can be treated as an inverse system in $\tn{add}^{\mathcal{C}}(\Lambda^*)$, and so by Theorem \ref{thm limits of Add are in Add} it has an inverse limit in $\tn{Add}^{\mathcal{C}}(\Lambda^*)$.  The module obtained by giving this module the discrete topology coincides with the inverse limit in $\Lambda\dash\tn{Mod}^\mathcal{D}$ because all these modules are torsion.

In $\tn{Add}^{\mathcal{C}}(\Lambda^*)$ our module has the form $\bigoplus I_j\ctens_R V_j$, where $I_j$ runs through the indecomposable summands of $\Lambda^*$ and $V_j$ is a free module in $R\dash\tn{Mod}^{\mathcal{C}}$.  But each $I_j$ is of finite length, hence finitely presented, so that $I_j\ctens V_j\iso I_j\otimes V_j$ by Proposition \ref{prop tensors coincide}.  If we ignore the topology, $V_j$ is a free abstract module, by \cite[Thm 3.3]{Chase}.  Thus, the module is in $\tn{Add}^{\mathcal{D}}(\Lambda^*) = \tn{Inj}^{\mathcal{D}}\dash \Lambda$.

\medskip

For the general case, write $\Lambda = \invlim \Lambda/I_i$.  Denote by $X = \invlim X_j$ our module, where each $X_j$ is injective.  Then
$$T^{I_i}\invlim^{\mathcal{D}}X_j = 
\invlim^{\mathcal{D}}T^{I_i}X_j.$$
But each $T^{I_i}X_j$ is injective as a $\Lambda/I_i$-module, so by the special case above $\invlim_j^{\mathcal{D}}T^{I_i}X_j$ is in $\tn{Inj}^{\mathcal{D}}\dash\Lambda/I_i$.  Now take $\dirlim_{I_i}$, observing that by \cite[Ch.2, Cor.\ 1 to Thm 1]{GabrielThesis} a direct limit of modules injective over $\Lambda/I_i$ is itself injective over $\Lambda$.
\end{proof}

\begin{theorem}
A direct limit in $\Lambda\dash\tn{Mod}^\mathcal{C}$ of modules in $\tn{add}^{\mathcal{C}}(M)$ is in $\tn{Add}^{\mathcal{C}}(M)$.  An inverse limit in $\Lambda\dash\tn{Mod}^\mathcal{D}$ of modules in $\tn{add}^{\mathcal{D}}(M^*)$ is in $\tn{Add}^{\mathcal{D}}(M^*)$.
\end{theorem}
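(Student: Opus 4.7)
The plan is to transport both statements through the equivalences of Proposition~\ref{Prop restriction to Add and Proj Equiv} and its $\mathcal{D}$-dual, reducing them to the already-established Proposition~\ref{prop wrong way limits of projectives and injectives} about (co)limits of projective / injective modules over $E = \tn{End}_\Lambda(M)$.

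For the first statement, let $\{X_i\}$ be a direct system in $\tn{add}^{\mathcal{C}}(M)$ and put $X = \dirlim X_i$ in $\Lambda\dash\tn{Mod}^{\mathcal{C}}$. Applying $U = \tn{Hom}_\Lambda(M, -)$ termwise yields a direct system in $\tn{proj}^{\mathcal{C}}\dash E$: each $X_i$ is a summand of some $M^{n_i}$, and $U$ commutes with finite products (\S\ref{subsection tensor hom and Ext}), so $U(X_i)$ is a summand of $E^{n_i}$. By Proposition~\ref{prop wrong way limits of projectives and injectives} the limit $P := \dirlim U(X_i)$, taken in $\tn{Mod}^{\mathcal{C}}\dash E$, is projective. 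The functor $V = -\ctens_E M$ commutes with direct limits (Lemma~\ref{Lemma LAPC and RAPL for Vs}) and satisfies $V \circ U \iso \id$ on $\tn{Add}^{\mathcal{C}}(M)$ (Proposition~\ref{Prop restriction to Add and Proj Equiv}), so
\[
V(P) \;=\; V(\dirlim U(X_i)) \;\iso\; \dirlim V(U(X_i)) \;\iso\; \dirlim X_i \;=\; X.
\]
Since $V$ carries $\tn{Proj}^{\mathcal{C}}\dash E$ into $\tn{Add}^{\mathcal{C}}(M)$ by Proposition~\ref{Prop restriction to Add and Proj Equiv}, we conclude $X \in \tn{Add}^{\mathcal{C}}(M)$.

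For the second statement the quickest route is Pontryagin duality: an inverse system $\{Y_i\}$ in $\tn{add}^{\mathcal{D}}(M^*)$ dualizes to a direct system $\{Y_i^*\}$ in $\tn{add}^{\mathcal{C}}(M)$ (the category duality between $\tn{Add}^{\mathcal{C}}(M)$ and $\tn{Add}^{\mathcal{D}}(M^*)$ from \S\ref{subsection tensor hom and Ext} restricts to the small versions), whose direct limit belongs to $\tn{Add}^{\mathcal{C}}(M)$ by the first part, and then $\invlim Y_i \iso (\dirlim Y_i^*)^* \in \tn{Add}^{\mathcal{D}}(M^*)$. A parallel direct argument is available if preferred: run the same reasoning with the dual equivalence between $\tn{Add}^{\mathcal{D}}(M^*)$ and $E\dash\tn{Inj}^{\mathcal{D}}$, using that $V'$ commutes with inverse limits (Lemma~\ref{Lemma LAPC and RAPL for Vs}) and the injective half of Proposition~\ref{prop wrong way limits of projectives and injectives}.

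I do not anticipate a serious obstacle: the real content lies in Proposition~\ref{prop wrong way limits of projectives and injectives}, and all that remains is the formal compatibility of the adjunction $V \dashv U$ with direct limits (respectively $U' \dashv V'$ with inverse limits), which is exactly Lemma~\ref{Lemma LAPC and RAPL for Vs}. The only point to watch is that $U$ indeed lands in the \emph{finitely generated} projectives, so that Proposition~\ref{prop wrong way limits of projectives and injectives} is applicable; this is immediate from $U(M) = E$ together with the preservation of finite products.
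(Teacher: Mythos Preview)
Your proof is correct and follows essentially the same route as the paper: both transport the system through the equivalence of Proposition~\ref{Prop restriction to Add and Proj Equiv}, invoke Proposition~\ref{prop wrong way limits of projectives and injectives} on the resulting system of projectives over $E$, and then use that $V$ preserves direct limits (Lemma~\ref{Lemma LAPC and RAPL for Vs}) to pull the conclusion back. Your explicit remark that $U(X_i)$ must land in the \emph{finitely generated} projectives is a good observation---the paper leaves this implicit, but Proposition~\ref{prop wrong way limits of projectives and injectives} is stated for $\Lambda\dash\tn{mod}^{\mathcal{C}}$ (lower-case), so this point is genuinely needed.
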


\begin{proof}
We prove the first statement.  Let $X = \dirlim X_i$ be a direct limit of modules in $\tn{add}^{\mathcal{C}}(M)$.  Then by Proposition \ref{Prop restriction to Add and Proj Equiv} there are projective right $E$-modules $P_i$ for which
$$X = \dirlim X_i = \dirlim V(P_i) = V(\dirlim P_i),$$
where the last equality is from Lemma \ref{Lemma LAPC and RAPL for Vs}.  Hence $X\in \tn{Add}^{\mathcal{C}}(M)$ by Propositions \ref{prop wrong way limits of projectives and injectives} and \ref{Prop restriction to Add and Proj Equiv}.
\end{proof}

\section{Flatness}

We say that a module $X$ in $\Lambda\dash\tn{Mod}^{\mathcal{C}}$ is \emph{flat} if the functor $-\ctens_{\Lambda}X$ is exact on $\tn{Mod}^\mathcal{C}\dash\Lambda$.  A module $X$ in $\Lambda\dash\tn{Mod}^{\mathcal{C}}$ is flat if, and only if, it is projective.  This follows from the existence of projective covers, and so the usual proof for perfect rings (see eg. \cite[Thm 24.25]{Lam}) carries through.

We consider other criteria for flatness.  Observe that a direct limit of projective modules need not be projective, but by Proposition \ref{prop wrong way limits of projectives and injectives} a direct limit in $\Lambda\dash\tn{Mod}^{\mathcal{C}}$ of finitely generated projectives is again projective.  The converse is false: a projective module need not be a direct limit of finitely generated projectives.  This is easily seen in the dual case, by taking $\Lambda = R = k$, a finite field.  The sum $\bigoplus_{n\in \N}k$ of countably many copies of $k$ (an injective object of $\tn{Mod}^\mathcal{D}\dash k$) is countable, hence not an inverse limit of finite dimensional vector spaces by \cite[Prop.\ 2.3.1(b)]{RZ12010}.  

On the other hand, if we allow summands, the problem vanishes: If $X$ is an object of  $\tn{Inj}^{\mathcal{D}}\dash\Lambda$ then $X = \bigoplus_{j\in J}I_j$ with each $I_j$ an indecomposable injective.  Consider $Y = \prod_{j\in J}I_j$ in the category of \emph{abstract} $\Lambda$-modules.  Then $X\subseteq TY\subseteq Y$.  Because $Y$ is an inverse limit of finitely cogenerated injectives, it follows that $TY$ is an inverse limit in $\tn{Ind}^\mathcal{D}\dash\Lambda$.  The inclusion $X\to TY$ splits because $X$ is injective.

\section{Add$(M)$ precovers and covers}\label{Section precovers and covers}

It is convenient to follow the approach to covers and precovers given in \cite{EnochsJenda}.  In this section, $M$ denotes a finitely generated module in $\Lambda\dash\tn{Mod}^{\mathcal{C}}$.

\begin{defn}[cf. \hbox{\cite[\S 5]{EnochsJenda}}]
Let $X$ be an object of $\Lambda\dash\tn{Mod}^{\mathcal{C}}$.  The continuous homomorphism $\rho:P\to X$ is an \emph{$\tn{Add}(M)$-precover} of $X$ if
\begin{itemize}
\item $P\in \tn{Add}(M)$,
\item Given any continuous homomorphism $\alpha:S\to X$ with $S$ in $\tn{Add}(M)$, there exists a continuous homomorphism $\gamma:S\to P$ such that $\rho\gamma = \alpha$. 
\end{itemize}
An $\tn{Add}(M)$-precover $\rho:P\to X$ is an \emph{$\tn{Add}(M)$-cover} of $X$ if every continuous homomorphism $\gamma:P\to P$ such that $\rho\gamma=\rho$ is an automorphism of $P$. 
\end{defn}

The $\tn{Add}(M)$-cover of $X$, if it exists, is clearly unique up to isomorphism.  It is also easily checked that the $\tn{Add}(M)$-cover of $X$ is a direct summand of any $\tn{Add}(M)$-precover of $X$, and that an $\tn{Add}(M)$-precover is a split-surjection if, and only if, $X\in \tn{Add}(M)$.  It follows that an $\tn{Add}(\Lambda)$-cover corresponds to the usual notion of a projective cover -- that is, an $\tn{Add}(\Lambda)$-cover $\rho:P\to X$ of $X$ is precisely a surjective homomorphism from the projective $\Lambda$-module $P$ with $P$ minimal with respect to direct sum decompositions (cf.\ \cite[\S 1.5]{Benson}).

\begin{theorem}\label{covers exist}
Let $M$ be a finitely generated $\Lambda$-module and let $X$ be any pseudocompact $\Lambda$-module.  The $\tn{Add}(M)$-cover of $X$ exists.
\end{theorem}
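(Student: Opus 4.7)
The plan is to reduce the problem to the existence of projective covers in the category of pseudocompact right $E$-modules, using the adjunction and restricted equivalence from Section \ref{Section Change of category}. Let $E = \tn{End}_\Lambda(M)$, which is a pseudocompact $R$-algebra by Lemma \ref{Lemma E pseudocompact}, and recall the adjoint pair $V = -\ctens_E M \dashv U = \tn{Hom}_\Lambda(M,-)$ from Lemma \ref{Adjunction Lmod and Emod}, which restricts to mutually inverse equivalences between $\tn{Add}^{\mathcal{C}}(M)$ and $\tn{Proj}^{\mathcal{C}}\dash E$ by Proposition \ref{Prop restriction to Add and Proj Equiv}. Since $M$ is finitely generated and $X$ is pseudocompact, the right $E$-module $U(X) = \tn{Hom}_\Lambda(M,X)$ is pseudocompact (as remarked in Section \ref{subsection tensor hom and Ext}), so it admits a projective cover $\pi\colon P \to U(X)$ in $\tn{Mod}^{\mathcal{C}}\dash E$ by Gabriel's theorem \cite[Ch.II, Thm 2]{GabrielThesis}.

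Define the candidate $\tn{Add}(M)$-cover to be
\[
\rho \;=\; \varepsilon_X \circ V(\pi) \colon V(P) \longrightarrow X,
\]
where $\varepsilon\colon VU \Rightarrow \id$ is the counit of the adjunction. Because $P$ is projective in $\tn{Mod}^{\mathcal{C}}\dash E$, its image $V(P)$ lies in $\tn{Add}^{\mathcal{C}}(M)$ by Proposition \ref{Prop restriction to Add and Proj Equiv}. Under the adjunction bijection $\tn{Hom}_\Lambda(V(P),X) \iso \tn{Hom}_E(P,U(X))$, the triangle identity $U(\varepsilon_X)\eta_{U(X)} = \id$ shows that $\rho$ corresponds precisely to $\pi$.

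To check the precover property, let $S \in \tn{Add}^{\mathcal{C}}(M)$ and $\alpha\colon S \to X$ be continuous. Applying $U$ gives $U(\alpha)\colon U(S) \to U(X)$ with $U(S)$ projective, so $U(\alpha)$ lifts through $\pi$ to some $\beta\colon U(S) \to P$ with $\pi\beta = U(\alpha)$. Applying $V$ and using the naturality of $\varepsilon$, together with the fact (Proposition \ref{Prop restriction to Add and Proj Equiv}) that $\varepsilon_S\colon V(U(S)) \to S$ is an isomorphism, one finds
\[
\rho \circ \bigl( V(\beta)\, \varepsilon_S^{-1}\bigr) \;=\; \varepsilon_X V(\pi\beta)\,\varepsilon_S^{-1} \;=\; \varepsilon_X V(U(\alpha))\,\varepsilon_S^{-1} \;=\; \alpha\, \varepsilon_S\,\varepsilon_S^{-1} \;=\; \alpha,
\]
so $\rho$ is an $\tn{Add}^{\mathcal{C}}(M)$-precover.

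For the cover property, suppose $\gamma\colon V(P) \to V(P)$ satisfies $\rho\gamma = \rho$. Since $V$ restricts to an equivalence on the relevant subcategories, $\gamma = V(\gamma')$ for a unique $\gamma'\colon P \to P$. Transporting the equation $\rho\gamma = \rho$ across the adjunction bijection (using that $\rho \leftrightarrow \pi$, as noted above) yields $\pi\gamma' = \pi$. Minimality of the projective cover $\pi$ forces $\gamma'$ to be an automorphism, and therefore so is $\gamma = V(\gamma')$. The principal subtlety I anticipate is verifying pseudocompactness of $U(X)$ and the compatibility of $V$ with the unit/counit machinery in the pseudocompact setting; these are handled by the results of Section \ref{subsection tensor hom and Ext} and the restricted equivalence of Proposition \ref{Prop restriction to Add and Proj Equiv}, so no further analytic difficulty arises.
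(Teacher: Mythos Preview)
Your proof is correct and follows essentially the same route as the paper: both construct the cover as $\varepsilon_X\circ V(\pi)$ from a projective cover $\pi$ of $U(X)$ in $\tn{Mod}^{\mathcal{C}}\dash E$, verify the precover property via the lifting $V(\beta)\varepsilon_S^{-1}$, and deduce the cover property by transporting $\gamma$ back across the equivalence (the paper writes this transported map explicitly as $\eta_P^{-1}U(\gamma)\eta_P$, which is exactly your $\gamma'$). The only cosmetic difference is that you invoke the adjunction bijection to identify $\rho\leftrightarrow\pi$ and then argue naturally, whereas the paper computes with the triangle identities directly; the content is the same.
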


\begin{proof}
Denote by $\varepsilon, \eta$ the counit and unit of the adjunction of Lemma \ref{Adjunction Lmod and Emod}.  By results mentioned there, restricting the domain and codomain of $\varepsilon$ to $\tn{Add}(M)$, $\tn{Proj}(E)$ yields a natural isomorphism.  Since $E$ is pseudocompact by Lemma \ref{Lemma E pseudocompact}, $\tn{Mod}^{\mathcal{C}}\dash E$ has projective covers. 

Let $\rho:P\onto U(X)$ be the projective cover of $U(X)$.  Then $V(P)$ is an $\tn{Add}(M)$-module and
$$\varepsilon_X\circ V(\rho) : V(P) \to VU(X) \to X$$
is the $\tn{Add}(M)$-cover of $X$.  We sketch the argument.  Given $\alpha:S\to X$ with $S\in \tn{Add}(M)$, there is a homomorphism $\gamma:U(S)\to P$ such that $\rho\gamma = U(\alpha)$ because $\rho$ is a precover.  The diagram
$$
\xymatrix{    
               &  V(P)\ar[d]^{\varepsilon_X V(\rho)} \\ 
S\ar[r]_{\alpha} \ar[ur]^{V(\gamma)\varepsilon_S^{-1}}  & X  } 
$$
commutes:
\begin{align*}
\varepsilon_X V(\rho)V(\gamma){\varepsilon_S}^{-1} & = \varepsilon_X V(\rho\gamma){\varepsilon_S}^{-1} \\
& = \varepsilon_X VU(\alpha){\varepsilon_S}^{-1} \\
& = \alpha \varepsilon_S{\varepsilon_S}^{-1} \\
& = \alpha,
\end{align*}
hence $\varepsilon_XV(\rho)$ is a precover.  To check it is a cover, consider $\gamma$ completing the following diagram:
 $$
 \xymatrix{    
V(P)\ar[rr]^{\gamma}\ar[dr]_{\varepsilon_XV(\rho)} &                &  V(P)\ar[dl]^{\varepsilon_XV(\rho)} \\ 
 &  X &}
 $$
One checks that ${\eta_P}^{-1}U(\gamma)\eta_P:P\to P$ is an isomorphism using the counit-unit equations and the fact that $\rho$ is a cover.  Hence $\gamma$ too is an isomorphism. 
\end{proof}

From the construction it follows that if $X$ is finitely generated over $\Lambda$, then so is its $\tn{Add}(M)$-cover.  In the following definition, $H\leqslant_G G$ indicates that $H$ runs through a set of representatives of the classes of $G$-conjugates of subgroups of $G$.

\begin{defn}
Let $G$ be a finite group and $X$ a pseudocompact $RG$-module.  A \emph{permutation precover} of $X$ is an $\tn{Add}(M)$-precover of $X$, where 
$$M = \prod_{H\leqslant_G G}R[G/H].$$ 
The \emph{permutation cover} of $X$ is the corresponding $\tn{Add}(M)$-cover.

Let $R$ be pseudocompact integral domain.  A \emph{monomial precover} (or \emph{generalized permutation precover}) of $X$ is an $\tn{Add}(M)$-precover of $X$, where
$$M  = \prod_{\substack{H\leqslant_G G}}V_H\ind{G}_H$$
and $V_H$ runs through the set of isomorphism classes of $R$-rank 1 $RH$-lattices.  The \emph{monomial cover} of $X$ is the corresponding $\tn{Add}(M)$-cover.
\end{defn}

Both the permutation and monomial covers of $X$ exist by Theorem \ref{covers exist}.

\medskip

We present some useful properties of permutation and monomial precovers that we will require in Section \ref{Section Weiss}.  Following Samy Modeliar \cite{SamyModeliarThesis}, we say that an $RG$-module homomorphism $\rho:V\to U$ is \emph{supersurjective} if the induced homomorphism $\rho^H:V^H\to U^H$ is surjective for every subgroup $H$ of $G$.  Similarly, $\rho$ is \emph{monomial supersurjective} if the induced homomorphism $\tn{Hom}_{RG}(W\ind{G}_H,V)\to \tn{Hom}_{RG}(W\ind{G}_H,U)$ is surjective for every $R$-rank $1$ $RH$-lattice $W$.  Note that monomial supersurjective homomorphisms are supersurjective.

\begin{lemma}\label{perm precovers supersurjective}
\begin{enumerate}
\item A homomorphism from a permutation module is a permutation precover if, and only if, it is supersurjective.
\item Let $\pi:X\onto Y$ be supersurjective and $f:L\to Y$ a homomorphism from a permutation module $L$.  Then $f$ lifts to a homomorphism $\widetilde{f}:L\to X$ such that $\pi\widetilde{f}=f$.
\end{enumerate}
\end{lemma}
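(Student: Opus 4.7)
The plan is to establish (2) first and to deduce both directions of (1) from it, since (1) is essentially a rewording of a specific instance of the lifting problem solved in (2).

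For the easy direction of (1) (precover $\Rightarrow$ supersurjective), I would test against the module $R[G/H]$. Using the Frobenius-reciprocity identification $\tn{Hom}_{RG}(R[G/H], W) \cong W^H$, any $u \in U^H$ furnishes an $RG$-map $\alpha : R[G/H] \to U$ sending the trivial coset to $u$; the precover property yields $\gamma : R[G/H] \to V$ with $\rho\gamma = \alpha$, and its value on the coset $H$ is an element of $V^H$ mapping to $u$ under $\rho^H$. Hence $\rho^H$ is surjective for every $H \leqslant G$.

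For (2), the plan is to use the equivalence of categories set up in Section \ref{Section Change of category}. Let $M = \prod_{H \leqslant_G G} R[G/H]$, which is a finite direct sum since $G$ is finite, and set $E = \tn{End}_{RG}(M)$. By Lemma \ref{Adjunction Lmod and Emod} the functor $U = \tn{Hom}_{RG}(M,-)$ has the left adjoint $V = - \ctens_E M$, and by Proposition \ref{Prop restriction to Add and Proj Equiv} these restrict to mutually inverse equivalences between $\tn{Add}(M)$ and $\tn{Proj}^{\mathcal{C}}\dash E$. Because $M$ is a finite direct sum of the $R[G/H]$'s, $U$ splits accordingly and $U(\pi)$ identifies with $\prod_H \pi^H$, so supersurjectivity of $\pi$ is equivalent to surjectivity of $U(\pi)$ as a map of right $E$-modules. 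Since $L$ is a permutation module we have $L \in \tn{Add}(M)$ and hence $U(L)$ is projective; so we may lift $U(f) : U(L) \to U(Y)$ along $U(\pi)$ to some $g : U(L) \to U(X)$. Writing $\varepsilon$ for the counit of the adjunction (so that $\varepsilon_L : VU(L) \to L$ is an isomorphism because $L \in \tn{Add}(M)$), the composite $\widetilde{f} := \varepsilon_X \circ V(g) \circ \varepsilon_L^{-1} : L \to X$ satisfies $\pi \widetilde{f} = f$ by naturality of $\varepsilon$ and the triangle identities.

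The non-trivial direction of (1) (supersurjective $\Rightarrow$ precover) is then the content of (2) applied with $X = V$, $Y = U$, $\pi = \rho$, and an arbitrary permutation module $S$ in the role of $L$: the lift produced is exactly the factorization required for $\rho$ to be an $\tn{Add}(M)$-precover. I do not foresee any serious obstacle; the only step that requires a little care is confirming $\pi \widetilde{f} = f$, and this is a routine diagram chase using naturality of $\varepsilon$ together with the functoriality of $U$ and $V$.
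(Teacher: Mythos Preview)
Your argument is correct, but it takes a genuinely different route from the paper's. The paper proves (1) first and then deduces (2) from it: given a supersurjective $\pi:X\to Y$ and $f:L\to Y$, it picks a permutation precover $\alpha:P\to X$, observes that $\pi\alpha$ is again supersurjective and hence (by the already-proved part (1)) a permutation precover of $Y$, and then lifts $f$ through $\pi\alpha$. For the harder direction of (1) itself, the paper works directly with the functorial isomorphism $\Hom_{RG}(R[G/H],-)\cong(-)^H$ and the isotypic decomposition of an $R$-permutation module, avoiding the machinery of Section~\ref{Section Change of category} entirely.

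Your route inverts this dependency: you establish (2) by passing through the equivalence $\tn{Add}(M)\simeq\tn{Proj}^{\mathcal C}\dash E$ of Proposition~\ref{Prop restriction to Add and Proj Equiv}, using that supersurjectivity of $\pi$ is exactly surjectivity of $U(\pi)$ and that $U(L)$ is projective, and then read off the non-trivial half of (1) as a special case. This is clean and conceptually satisfying, and it makes transparent why the permutation precover condition is precisely the usual projectivity/lifting statement transported along the equivalence. The paper's approach, by contrast, is more elementary and self-contained---it needs nothing beyond Frobenius reciprocity and the structure of $R$-permutation modules---which is perhaps why the authors chose it. One small point of care: to conclude that $\rho$ is an $\tn{Add}(M)$-precover you must lift maps from arbitrary $S\in\tn{Add}(M)$, not just from permutation modules; your argument for (2) only uses $L\in\tn{Add}(M)$ (to get $\varepsilon_L$ invertible and $U(L)$ projective), so this goes through, but you should say so explicitly.
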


\begin{proof}
\begin{enumerate}
\item Given a subgroup $H$ of $G$, we have isomorphisms of functors
$$\tn{Hom}_{RG}(R\ind{G}_H,-)\iso \tn{Hom}_{RH}(R,(-)\res{H}) \iso (-)^H.$$
Suppose first that $\rho : P \to U$ is a permutation precover. Via the above isomorphisms, an element $u\in U^H$ is $\alpha(H)$ for a homomorphism $\alpha : R[G/H] \to U$. This homomorphism factors as $\alpha = \rho\gamma$ for a homomorphism $\gamma:R[G/H]\to P$ and hence $\gamma(H)$ is an element of $P^H$ mapping onto $u$ via $\rho^H$, showing that $\rho^H$ is surjective.

Suppose now that $\rho$ is supersurjective.  An $R$-permutation module is a finite sum of isotypic components so it suffices to check that we can lift a homomorphism $\alpha : X \to U$ where $X$ is a direct summand of $\prod R[G/H]$ for some subgroup $H$ of $G$.  Let $\iota : X \to \prod R[G/H]$, $\pi : \prod R[G/H] \to X$ be the splitting maps. By the functor isomorphisms above and supersurjectivity, there is a homomorphism $\gamma : \prod R[G/H] \to P$ such that $\rho\gamma = \alpha\pi$, and now $\gamma\iota$ is the required lifting of $\alpha$.

\item Let $\alpha:P\to X$ be a permutation precover of $X$.  Then $\pi\alpha$ is supersurjective, hence a permutation precover of $Y$.  It follows that there exists a $\gamma:L\to P$ such that $\pi\alpha\gamma=f$, and now $\alpha\gamma = \widetilde{f}$ is the required lift.
\end{enumerate}
\end{proof}

In just the same way we have:

\begin{lemma}\label{monomial precovers monomial supersurjective}
Let $R$ be a pseudocompact integral domain.
\begin{enumerate}
\item A homomorphism from a monomial module is a monomial precover if, and only if, it is monomial supersurjective.

\item Let $\pi:X\onto Y$ be monomial supersurjective and $f:L\to Y$ a homomorphism from a monomial module $L$.  Then $f$ lifts to a homomorphism $\widetilde{f}:L\to X$ such that $\pi\widetilde{f}=f$.
\end{enumerate}
\end{lemma}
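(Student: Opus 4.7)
The plan is to mimic the proof of Lemma~\ref{perm precovers supersurjective} step by step, with the representable functor $\tn{Hom}_{RG}(V\ind{G}_H, -) \iso \tn{Hom}_{RH}(V, -\res{H})$ playing the role that $(-)^H$ plays for permutation modules. Two categorical facts will do the work: since $H$ has finite index in $G$, induction commutes with arbitrary products in $\Lambda\dash\tn{Mod}^\mathcal{C}$ (as $RG$ is a finitely generated free right $RH$-module), giving $\prod_i V\ind{G}_H \iso (\prod_i V)\ind{G}_H$; and since $V$ has $R$-rank $1$, there is a natural isomorphism $\prod_i V \iso V\ctens_R \prod_i R$ as $RH$-modules (both are $R^I$ as $R$-modules with the same diagonal character action), whence tensor-hom identifies $\tn{Hom}_{RH}(\prod_i V, -)$ with $\tn{Hom}_R(\prod_i R, \tn{Hom}_{RH}(V, -))$.

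For the forward direction of (1), each $V\ind{G}_H$ with $V$ a rank-$1$ $RH$-lattice lies in $\tn{add}(M)$, so a monomial precover $\rho: P \to U$ automatically factors any $\phi: V\ind{G}_H \to U$ through itself, which is exactly monomial supersurjectivity. For the converse, I take $\rho: P \to U$ monomial supersurjective and let $\alpha: X \to U$ with $X$ monomial. Since $G$ is finite and there are only finitely many isomorphism classes of rank-$1$ $RH$-lattices, $X$ decomposes into finitely many isotypic components, so I may assume $X$ is a summand of $Y = \prod_i V\ind{G}_H$ for some fixed $H$ and $V$, via splittings $\iota: X\to Y$, $\pi: Y \to X$. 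By the adjunctions above, lifting $\alpha\pi: Y \to U$ through $\rho$ translates into lifting an $R$-module map $\prod_i R \to \tn{Hom}_{RH}(V, U\res{H})$ through the $R$-module surjection $\tn{Hom}_{RH}(V, P\res{H}) \onto \tn{Hom}_{RH}(V, U\res{H})$, which is surjective by monomial supersurjectivity. Such a lift exists because $\prod_i R$ is a free, hence projective, pseudocompact $R$-module; translating back yields $\gamma: Y \to P$ with $\rho\gamma = \alpha\pi$, and $\gamma\iota$ is the required lift of $\alpha$.

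Part (2) is a direct mirroring of its counterpart. Let $\alpha: P \to X$ be a monomial precover, which exists by Theorem~\ref{covers exist}. A small diagram chase shows $\pi\alpha: P \to Y$ is monomial supersurjective: given any $\phi: V\ind{G}_H \to Y$, lift first to $\psi: V\ind{G}_H \to X$ using monomial supersurjectivity of $\pi$, then to $\theta: V\ind{G}_H \to P$ via the precover property of $\alpha$ (applicable since $V\ind{G}_H$ is monomial), whence $\pi\alpha\theta = \phi$. Part~(1) now gives that $\pi\alpha$ is a monomial precover, so the map $f: L \to Y$ from the monomial $L$ factors as $\pi\alpha\gamma$ for some $\gamma$, whence $\widetilde{f} := \alpha\gamma$ satisfies $\pi\widetilde{f} = f$.

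The main obstacle is not conceptual but bookkeeping: verifying that induction commutes with the pseudocompact product (for which the finite generation of $RG$ over $RH$ is exactly the hypothesis one needs) and that the tensor-hom adjunction applies in the pseudocompact setting in the form used above. Both are routine given the machinery of \S\ref{subsection tensor hom and Ext}, but this is the one place where the translation to the monomial case departs materially from the permutation-module argument, and it is what makes essential use of the assumption that $R$ is an integral domain (so that $V$ is $R$-free of rank $1$).
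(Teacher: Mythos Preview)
Your proposal is correct and follows essentially the same approach as the paper, which gives no explicit proof but simply says ``In just the same way we have'' before stating the lemma, referring back to Lemma~\ref{perm precovers supersurjective}. Your filling-in of the details---in particular the reduction to a single isotypic component, the identification $\prod_i V\ind{G}_H \iso (V\ctens_R\prod_i R)\ind{G}_H$, and the use of the tensor-hom adjunction from \S\ref{subsection tensor hom and Ext} to reduce the lifting problem to projectivity of $\prod_i R$ over $R$---is exactly the natural adaptation of the permutation argument, and the justification you give for the pseudocompact bookkeeping is sound.
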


\begin{lemma}\label{Lemma supersurj onto Rperm is split}
If $M$ is an $R$-permutation module and $\rho:L\to M$ is supersurjective, then $\rho$ splits.
\end{lemma}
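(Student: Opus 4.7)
The plan is to reduce the statement to the lifting property already established in Lemma \ref{perm precovers supersurjective}(2), using only the definition of an $R$-permutation module.

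By definition of an $R$-permutation module, $M$ is a direct summand of a (pseudocompact) permutation module $P$, so there exist continuous $RG$-homomorphisms $\iota : M \to P$ and $\pi : P \to M$ with $\pi\iota = \id_M$. The map $\pi : P \to M$ is then a homomorphism from a permutation module into the target of the supersurjective map $\rho : L \to M$. Applying Lemma \ref{perm precovers supersurjective}(2) to $\pi$ and $\rho$ yields a continuous $RG$-homomorphism $\widetilde{\pi} : P \to L$ with $\rho \widetilde{\pi} = \pi$.

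The splitting is then $\sigma := \widetilde{\pi}\,\iota : M \to L$, since
\[
\rho \sigma = \rho \widetilde{\pi} \iota = \pi \iota = \id_M.
\]
There is no serious obstacle here: the whole content of the lemma is packaged inside Lemma \ref{perm precovers supersurjective}(2), which in turn rests on Lemma \ref{perm precovers supersurjective}(1) characterising permutation precovers as supersurjective maps from permutation modules. The only subtlety worth flagging is that the definition of $R$-permutation module must be read as \emph{summand of a permutation module} (as introduced just before Lemma \ref{perm modules coflasque}), so that the maps $\iota, \pi$ are available from the outset; once that is in hand, the lift of $\id_M$ through $\rho$ is immediate by factoring $\id_M = \pi\iota$ and lifting $\pi$.
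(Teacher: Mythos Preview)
Your proof is correct. The paper's argument is slightly different in shape: it takes a permutation precover $\gamma:C\to L$, notes that the composite $\rho\gamma$ is supersurjective and therefore a permutation precover of the $R$-permutation module $M$, hence splits, and deduces that $\rho$ itself splits. You instead unpack the summand structure of $M$ directly and invoke Lemma~\ref{perm precovers supersurjective}(2) to lift the retraction $\pi:P\to M$ through $\rho$. Both routes rest on the same underlying fact (supersurjective maps from permutation modules are precovers), but your version avoids mentioning precovers of $L$ at the cost of citing part (2) of the lemma, whose proof itself passes through a precover. The two arguments are thus essentially equivalent repackagings; yours is marginally more self-contained as a deduction from Lemma~\ref{perm precovers supersurjective}(2).
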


\begin{proof}
Let $\gamma:C\to L$ be a permutation precover of $L$.  The composition $\rho\gamma$ is supersurjective and hence a permutation precover of the $R$-permutation module $M$.  Thus $\rho\gamma$ splits, hence so does $\rho$.
\end{proof}

A lattice $L$ in $RG\dash\tn{Mod}^{\mathcal{C}}$ is said to be \emph{coflasque} if $H^1(H,L)=0$ for all $H\leqslant G$.

\begin{prop}
Let $0\to L\to M\xrightarrow{\rho} N\to 0$ be a short exact sequence of $RG$-lattices.  Suppose that $N$ is an $R$-permutation module and that $L$ is coflasque.  Then the sequence splits.
\end{prop}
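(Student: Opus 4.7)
The plan is to combine the coflasqueness hypothesis with Lemma \ref{Lemma supersurj onto Rperm is split}. More precisely, I would first show that the surjection $\rho : M \onto N$ is supersurjective in the sense of Section \ref{Section precovers and covers}, and then invoke that lemma directly to produce a splitting.

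To establish supersurjectivity, fix any subgroup $H \leqslant G$ and apply the fixed point functor $(-)^H = H^0(H,-)$ to the given short exact sequence. Since $H^n(H,-)$ is the right derived functor of $(-)^H$ in the pseudocompact category (as recalled at the end of Section \ref{subsection tensor hom and Ext}), we obtain the usual long exact cohomology sequence
\[
0 \to L^H \to M^H \xrightarrow{\rho^H} N^H \to H^1(H,L) \to \cdots .
\]
The coflasqueness hypothesis gives $H^1(H,L) = 0$, so $\rho^H$ is surjective. As $H$ was arbitrary, $\rho$ is supersurjective.

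Since $N$ is an $R$-permutation module and $\rho : M \to N$ is supersurjective, Lemma \ref{Lemma supersurj onto Rperm is split} immediately implies that $\rho$ splits, giving the desired splitting of the short exact sequence.

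There is no real obstacle here: the argument is the standard cohomological characterization of coflasque lattices, with the one subtlety being that we need the long exact sequence in pseudocompact cohomology, which is automatic from the fact that $\Lambda\dash\tn{Mod}^{\mathcal{C}}$ is an abelian category with enough projectives. The role played by the $R$-permutation hypothesis on $N$ is to convert the weak condition of supersurjectivity into an actual split surjection via Lemma \ref{Lemma supersurj onto Rperm is split}.
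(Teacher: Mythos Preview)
Your proof is correct and follows exactly the same approach as the paper: use $H^1(H,L)=0$ to deduce that $\rho^H$ is surjective for every $H\leqslant G$, hence $\rho$ is supersurjective, and then apply Lemma \ref{Lemma supersurj onto Rperm is split}. The only difference is that you spell out the long exact sequence explicitly, whereas the paper simply states that $0\to L^H\to M^H\to N^H\to 0$ is exact because $H^1(H,L)=0$.
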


\begin{proof}
For any subgroup $H$ of $G$ the sequence $0\to L^H\to M^H\xrightarrow{\rho^H} N^H\to 0$ is exact because $H^1(H,L)=0$.  Thus $\rho$ is supersurjective and hence split by Lemma \ref{Lemma supersurj onto Rperm is split}.
\end{proof}

\begin{corol}
Let $0\to L\to M\to N\to 0$ be a short exact sequence of $RG$-modules with $L,N$ both $R$-permutation modules.  If $|G|$ is not a zero divisor of $R$, then the sequence splits.
\end{corol}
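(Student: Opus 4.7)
The plan is to reduce directly to the Proposition immediately preceding the corollary, whose hypotheses (a short exact sequence of $RG$-lattices with $N$ an $R$-permutation module and $L$ coflasque) give exactly the conclusion we want. So the only work is to verify that the present hypotheses imply those of the proposition.

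First I would observe that $L$ and $N$ are $R$-permutation modules, hence free as $R$-modules, so they are in particular $RG$-lattices. Since $M$ sits in a short exact sequence between two $R$-lattices, it is itself $R$-projective, hence an $RG$-lattice; thus the given sequence is a short exact sequence of $RG$-lattices as required by the proposition.

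Next, since $|G|$ is not a zero divisor of $R$ and $L$ is an $R$-permutation module, Lemma \ref{perm modules coflasque} gives $H^1(H,L)=0$ for every subgroup $H\leqslant G$, i.e.\ $L$ is coflasque in the sense defined just before the proposition. Together with $N$ being an $R$-permutation module, both hypotheses of the proposition hold, so the sequence splits. There is no real obstacle here; the only thing to be careful about is to cite Lemma \ref{perm modules coflasque} correctly (the non-zero-divisor hypothesis on $|G|$ is exactly what is needed to conclude coflasqueness), and to note that $M$ being an $RG$-lattice is automatic from $L,N$ being $R$-free.
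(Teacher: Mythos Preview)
Your proof is correct and follows exactly the same approach as the paper: invoke Lemma \ref{perm modules coflasque} to see that $L$ is coflasque, then apply the preceding proposition. Your extra remark that $M$ is automatically an $RG$-lattice is a harmless (and correct) bit of bookkeeping that the paper leaves implicit.
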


\begin{proof}
By Lemma \ref{perm modules coflasque}, $L$ is coflasque, so the result is immediate from the above proposition.
\end{proof}

We will give a formula for the permutation cover of a pseudocompact $RG$-module using methods similar to those developed by Samy Modeliar \cite{SamyModeliar}, when $R$ is a complete discrete valuation ring whose residue class field has characteristic $p$ and $G$ is a finite group.  Let $\pi$ be a prime element of $R$.  Given a pseudocompact $RG$-module $X$ and a subgroup $H$ of $G$, consider the map
\begin{align*}
\tn{Tr}_H^G : X^H & \to X^G \\
x\,\,\,\, & \mapsto \sum_{g\in G/H}gx 
\end{align*}
where $g$ runs through a set of left coset representatives of $G/H$.  This is a special case of the map defined in \cite[Def.\ 3.6.2]{Benson} and the properties given there apply.  Using \cite[Lem.\ 3.6.3]{Benson} one sees that if $|G/H|$  is coprime to $p$, then $\tn{Tr}_H^G$ is surjective. 

\begin{defn}
Given a $p$-subgroup $P$ of $G$ and a pseudocompact $RG$-module $X$, define
$$X^{[P]} :=  \frac{X^P}{\sum_{Q<P}\tn{Tr}_Q^P (X^Q) + \pi {X}^P}$$
$$X^{\db{P}} :=  \frac{X^{P}}{\sum_{\substack{P<Q \\ Q\, p-\hbox{group}}}X^Q + \sum_{Q<P}\tn{Tr}_Q^P (X^Q) + \pi {X}^P}$$
wherein the symbol $<$ indicates strict inclusion.
\end{defn}
 
Note that in $X^{[P]}$ and $X^{\db{P}}$ we quotient out by $\pi X^P$, so that both are $(R/\pi)[\norm{G}(P)/P]$-modules.  The module $X^{[P]}$ is studied in \cite{Broue} and references therein, while $X^{\db{P}}$ is inspired by \cite[Def.\ 5]{SamyModeliar}.  An $RG$-module homomorphism $f:X\to Y$ sends $X^P$ into $Y^P$ and the denominator of $X^{[P]}$ into the denominator of $Y^{[P]}$, and hence induces a natural map $f^{[P]} : X^{[P]}\to Y^{[P]}$.  Similarly, $f$ induces a natural map $f^{\db{P}} : X^{\db{P}}\to Y^{\db{P}}$.  

\medskip

\noindent \emph{Remark:} Note that $X^{[P]}\res{P/P} = (X\res{P})^{[P]}$.  Such a strong statement cannot be made for $X^{\db{P}}$ but we do at least have that $X^{\db{P}} = (X\res{\norm{G}(P)})^{\db{P}}$ because if $Q$ is a $p$-subgroup of $G$ properly containing $P$, then so is $Q\cap \norm{G}(P) = \norm{Q}(P)$.

\begin{lemma}\label{Lemma check supersurjective}
The homomorphism $f:M\to N$ in $RG\dash\tn{Mod}^{\mathcal{C}}$ is supersurjective if, and only if, $f^{\db{P}}:M^{\db{P}}\to N^{\db{P}}$ is surjective for every $p$-subgroup $P$ of $G$. 
\end{lemma}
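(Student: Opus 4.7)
The forward direction is immediate: $f^{\db{P}}$ is the map induced by $f^P$ on the quotients defining $M^{\db{P}}$ and $N^{\db{P}}$, so surjectivity of $f^P$ forces surjectivity of $f^{\db{P}}$.

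For the reverse direction, I would first reduce to showing $f^P$ surjective for every $p$-subgroup $P$ of $G$. Given $H\leqslant G$ with Sylow $p$-subgroup $P_H$, the index $[H:P_H]$ is coprime to $p$ and hence a unit of $R$ (since the residue field of $R$ has characteristic $p$); consequently $\tfrac{1}{[H:P_H]}\tn{Tr}_{P_H}^H$ retracts the inclusion $N^H\hookrightarrow N^{P_H}$, so that $f^{P_H}$ surjective promotes to $f^H$ surjective. I would then do downward induction on $|P|$ among $p$-subgroups: assuming $f^{P'}$ surjective for all $p$-subgroups $P'\supsetneq P$, and given $n\in N^P$, surjectivity of $f^{\db{P}}$ provides $m_0 \in M^P$ with
\[
n - f(m_0) = \sum_{Q\supsetneq P,\, p\text{-gp}} n_Q \;+\; \sum_{Q\subsetneq P}\tn{Tr}_Q^P(n'_Q) \;+\; \pi n_1.
\]
The outer inductive hypothesis absorbs the first sum into $f(M^P)$ (each $n_Q\in N^Q=f(M^Q)\subseteq f(M^P)$), so the task reduces to placing each transfer term $\tn{Tr}_Q^P(N^Q)$ (with $Q\subsetneq P$) into $f(M^P)+\pi N^P$. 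Once accomplished, $n\in f(M^P)+\pi N^P$ and Nakayama's Lemma for pseudocompact modules (\cite[Lem.\ 1.4]{Brumer}) yields $n\in f(M^P)$, closing the induction.

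To handle a transfer term $\tn{Tr}_Q^P(y)$ with $y\in N^Q$, I would apply $f^{\db{Q}}$-surjectivity to $y$, writing $y = f(x)+\sum_{Q'\supsetneq Q}y_{Q'}+\sum_{R\subsetneq Q}\tn{Tr}_R^Q(y_R)+\pi y''$, and then apply $\tn{Tr}_Q^P$. Since $x\in M^Q$, the $f(x)$ summand contributes $f(\tn{Tr}_Q^P(x))\in f(M^P)$; the $\pi y''$ summand lies in $\pi N^P$; each composed transfer $\tn{Tr}_Q^P(\tn{Tr}_R^Q(y_R))=\tn{Tr}_R^P(y_R)$ has smaller bottom group $R\subsetneq Q$ and is handled by a nested forward induction on $|Q|$ (with base $Q=1$). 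Each upward contribution $\tn{Tr}_Q^P(y_{Q'})$ is analyzed via $Q'' := Q'\cap P$: if $Q''\supsetneq Q$ then transitivity of the transfer gives $\tn{Tr}_Q^P(y_{Q'})=[Q'':Q]\,\tn{Tr}_{Q''}^P(y_{Q'})\in pN^P\subseteq\pi N^P$; if $|Q'|>|P|$ the outer IH lifts $y_{Q'}$ to $M^{Q'}\subseteq M^Q$, so $\tn{Tr}_Q^P(y_{Q'})\in f(M^P)$.

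The main obstacle is the residual case $Q'\cap P = Q$ with $|Q'|\leqslant|P|$, where neither transitivity nor the outer IH immediately applies. I expect this to be dispatched by iterating $f^{\db{Q'}}$-surjectivity and invoking Mackey-type identities to re-express the resulting composed transfers $\tn{Tr}_Q^P\circ\tn{Tr}_{R'}^{Q'}$; finiteness of the $p$-subgroup lattice of $G$ then guarantees termination, with every remaining piece eventually subsumed by one of the favorable subcases above or by $\pi N^P$.
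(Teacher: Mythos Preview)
Your forward direction and your reduction to $p$-subgroups are correct and match the paper. The gap is precisely where you locate it: the residual case $Q'\cap P=Q$ with $Q\subsetneq Q'$ and $|Q'|\leqslant|P|$, and your proposed fix does not close it.

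The difficulty is that your iteration has no well-founded measure. Applying $f^{\db{Q'}}$ to $y_{Q'}$ produces, besides the harmless terms, both ``upward'' contributions $\tn{Tr}_Q^P(y_{Q''})$ with $Q''\supsetneq Q'$ and ``downward'' contributions $\tn{Tr}_Q^P\bigl(\tn{Tr}_{R'}^{Q'}(y_{R'})\bigr)$ with $R'\subsetneq Q'$. Rewriting the latter by Mackey inside $Q'$ gives $\sum_{g\in Q\backslash Q'/R'}\tn{Tr}_{Q\cap{}^gR'}^{P}(g\,y_{R'})$; the double cosets with $Q\cap{}^gR'=Q$ contribute $\tn{Tr}_Q^P$ of an element of $N^{{}^gR'}$ with $Q\leqslant{}^gR'$ and $|{}^gR'|<|Q'|$, possibly even ${}^gR'=Q$. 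So the upward branch strictly increases $|Q'|$ while the downward branch strictly decreases it, and no single induction variable controls both; finiteness of the subgroup lattice does not by itself rule out cycling. (In $G=C_p\times C_p$ the iteration happens to close because $\tn{Tr}_1^P\tn{Tr}_1^{Q'}=\tn{Tr}_1^G$ lands in $N^G$, but this uses $PQ'=G$, which has no analogue in general.)

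The paper handles this by separating the two ingredients of $(-)^{\db{P}}$. It first restricts to a fixed Sylow $p$-subgroup $S$ and then proves a chain of implications
\[
f^{\db{P}}\text{ surjective }\forall P \;\Longrightarrow\; f^{[P]}\text{ surjective }\forall P \;\Longrightarrow\; f^{P}\text{ surjective }\forall P.
\]
The second arrow is easy upward induction on $|P|$ (this is essentially your inner induction, and it works cleanly because $(-)^{[P]}$ has no ``upward'' term). For the first arrow the paper passes to the cokernel $C$ of $f^{[P]}$, which is a module for $N_S(P)/P$, and shows $C_{N_S(P)}=0$; since $N_S(P)/P$ is a $p$-group this forces $C=0$. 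The point is that in coinvariants by $N_S(P)$ the problematic residual-type terms collapse: for $P\lhd Q$ with $|Q:P|=p$ one gets $\sum_{q\in Q/P}q\cdot d = p\cdot d = 0$. This normalizer--coinvariants trick is the missing idea; without it (or something equivalent) the residual case does not close.
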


\begin{proof}
The forward implication is clear.  If $f^H$ is not surjective for some subgroup $H$ of $G$, one easily checks using the surjectivity of $\tn{Tr}_P^H$ that $f^{P}$ is also not surjective for $P$ any Sylow $p$-subgroup of $H$.  Thus $f$ is supersurjective if, and only if, $f^P$ is surjective for every $p$-subgroup of $G$.  Note further that if $f^P$ is surjective for every subgroup $P$ contained in a fixed Sylow $p$-subgroup $S$ of $G$, then $f$ is supersurjective (since $M^{{}^g\!P} = gM^P$).  Thus we restrict once and for all to a Sylow $p$-subgroup $S$ of $G$.

Given a $p$-subgroup $P$ of $S$, we will show that
$$f^{\db{P}}\hbox{ surjective }\forall P\implies f^{[P]}\hbox{ surjective }\forall P\implies f^{P}\hbox{ surjective }\forall P.$$
We prove the second implication by induction on the order of $P$.  The base case $P=1$ is Nakayama's Lemma.  Fix a $p$-subgroup $P$ of $S$ and suppose by induction that $f^Q$ is surjective for every proper subgroup $Q$ of $P$.  As $f^{[P]}$ is surjective, we can write a given $n\in N^P$ as $f(m) + \sum_{Q<P}\tn{Tr}_Q^P(n_Q) + \pi x$ for some $m\in M^P, n_Q\in N^Q$ and $x\in N^P$.  But $n_Q = f(m_Q)$ for some $m_Q\in M^Q$ by induction, and hence
$$n-\pi x = f(m) + \sum_{Q<P}\tn{Tr}_Q^P(f(m_Q)) = f(m + \sum_{Q<P}\tn{Tr}_Q^P(m_Q))\in f(M^P).$$
This shows that $f^P$ is surjective modulo $\pi$, so that $f^P$ is surjective by Nakayama's Lemma.  We prove the first implication by induction on the index of $P$ in $S$, the case $P=S$ being trivial. 
Consider the cokernel $C$ of $f^{[P]}$.  Then $C^{\db{P}}$ is the cokernel of $f^{\db{P}}$, so it is 0 by hypothesis.  We will show that $C_{\tn{N}_S(P)}=0$, which implies that $C=0$ since taking coinvariants by a $p$-group cannot kill a non-zero module.  We must check that
$$\sum_{\substack{P<Q \\ Q\, p-\hbox{group}}}C^Q\leqslant \sum_{\substack{Q<P}}\tn{Tr}_Q^P(C^Q)$$
in $C_{\tn{N}_S(P)}$. Consider $c\in C^Q$.  We may suppose without loss of generality that $|Q:P| = p$.  By induction, $c = \sum \tn{Tr}_T^Q(d_T)$ with $d_T\in C^T$ ($T$ a proper subgroup of $Q$).  Fix such a $d=d_T$ and suppose without loss of generality that $|Q:T|=p$.  Whenever $A\leqslant B$ are subgroups, denote by $\tn{Res}_A^B:C^B\to C^A$ the inclusion.  The Mackey formula yields
$$\tn{Res}_P^Q\tn{Tr}_T^Q(d) = \sum_{q\in P\backslash Q/T}\tn{Tr}_{P\cap{}^q\!T}^P\,q\tn{Res}_{P^q\cap T}^Td.$$
If $P\neq T$ then, since both are normal in $Q$, no conjugate of $T$ is equal to $P$ and hence the corresponding element is induced from proper subgroups of $P$ as required.  We are left with the case where $P=T$. The sum simplifies to
$$\tn{Res}_P^Q\tn{Tr}_T^Q(d) = \sum_{q\in Q/P}qd.$$
But $Q\subseteq \tn{N}_S(P)$ and hence $\sum_{q\in Q/P}qd$ is equal to $pd$ in $C_{N_S(P)}$, so is $0$.
\end{proof}

\begin{lemma}\label{Lemma db Q killed for non conjugate perm module}
Let $G$ be a finite group and $P,Q$ $p$-subgroups of $G$.  If $P$ is not $G$-conjugate to $Q$, then
$$R[G/P]^{\db{Q}} =0.$$
\end{lemma}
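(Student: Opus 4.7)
The plan is to analyze $R[G/P]^Q$ via the orbit decomposition of $Q$ acting on $G/P$ and show that every $Q$-orbit sum already lies in the submodule being quotiented out in the definition of $X^{\db{Q}}$.

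First I would recall that $R[G/P]^Q$ is a free $R$-module with basis given by the $Q$-orbit sums on $G/P$, one for each double coset $Q g P \in Q\backslash G/P$. For such a representative $g$, write $S_g := Q \cap gPg^{-1}$ for the stabilizer of $gP$ in $Q$; the corresponding orbit sum is $\sigma_g := \sum_{q \in Q/S_g} qgP$. It suffices to prove that each $\sigma_g$ lies in
\[
\sum_{\substack{Q<Q' \\ Q'\text{ }p\text{-group}}} R[G/P]^{Q'} \;+\; \sum_{Q''<Q}\tn{Tr}_{Q''}^Q\bigl(R[G/P]^{Q''}\bigr).
\]

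I would then split into two cases according to whether $S_g = Q$ or $S_g \subsetneq Q$.

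If $S_g \subsetneq Q$, the element $gP \in G/P$ is fixed by $S_g$, so $gP \in R[G/P]^{S_g}$ and $\sigma_g = \tn{Tr}_{S_g}^Q(gP)$ lies in the second summand of the denominator.

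If $S_g = Q$, then $Q \subseteq gPg^{-1}$; here the non-conjugacy hypothesis enters. The subgroup $Q' := gPg^{-1}$ is a $p$-group (conjugate to $P$) containing $Q$, and since $Q'$ is $G$-conjugate to $P$ while $Q$ is not, the inclusion must be strict, i.e.\ $Q \subsetneq Q'$. In this case $\sigma_g = gP$ is fixed by $Q'$, so $\sigma_g \in R[G/P]^{Q'}$, placing it in the first summand of the denominator.

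Since every $R$-basis element $\sigma_g$ of $R[G/P]^Q$ lies in the denominator, the whole of $R[G/P]^Q$ does, so $R[G/P]^{\db{Q}} = 0$. The only step with any content is the use of non-conjugacy in the case $S_g = Q$; everything else is just the Mackey-style orbit description of fixed points on a transitive $G$-set, so I do not expect a real obstacle here.
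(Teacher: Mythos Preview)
Your proof is correct and follows essentially the same orbit-by-orbit analysis as the paper's proof: both split according to whether the $Q$-stabilizer of $gP$ is all of $Q$ (invoking the non-conjugacy to find a strictly larger $p$-group fixing $gP$) or a proper subgroup (writing the orbit sum as a trace). The only cosmetic difference is that the paper first restricts to $\norm{G}(Q)$ via Mackey to assume $Q$ is normal, making the case split global rather than coset-by-coset; your direct argument sidesteps this reduction entirely.
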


\begin{proof}
By the remark before Lemma \ref{Lemma check supersurjective} and the Mackey decomposition, we may restrict to $\norm{G}(Q)$ and hence suppose that $Q$ is normal in $G$.  There are two cases:
\begin{itemize}
    \item $P\cap Q < Q$.  The module $R[G/P]\res{Q}$ is a direct sum of modules of the form $R[Q/{}^{g}\!P\cap Q]$.  But ${}^{g}\!P\cap Q < Q$ and hence $R[G/P]^{[Q]}=0$ by the remark, so that $R[G/P]^{\db{Q}} = 0$ also.
    
    \item $P\cap Q = Q$, so that $Q<P$.  For any $g\in G$ the basis element $gP$ is in $R[G/P]^{{}^g\!P}$, hence $gP = 0$ in $R[G/P]^{\db{Q}}$ because $Q < {}^g\!P$.
\end{itemize}
\end{proof}

Here and in Section \ref{Section Weiss} we make use of the concept of relative projectivity.  Recall that an $RG$-module $U$ is projective relative to the subgroup $H$ if any continuous epimorphism of $RG$-modules $V\to U$ that splits as an $RH$-module homomorphism, splits as an $RG$-module homomorphism.  This is equivalent to saying that $U$ is a direct summand of an $RH$-module induced up to $G$, or to saying that the identity map of $U$ can be written as $\tn{Tr}_H^G(\alpha):=\sum_{x\in G/H}x\alpha x^{-1}$ for some continuous $RH$-endomorphism $\alpha$ of $U$ (the proof of \cite[Prop.\ 3.6.4]{Benson} goes through for pseudocompact modules). Dually, a module $U$ is injective relative to the subgroup $H$ if any continuous monomorphism of $RG$-modules $U\to V$ that splits as an $RH$-module homomorphism, splits as an $RG$-module homomorphism.  

If $U$ is an indecomposable finitely generated $RG$-module, a subgroup $Q$ of $G$ minimal with respect to the condition that $U$ is relatively $Q$-projective is called a \emph{vertex} of $U$.  The vertices of $U$ are $p$-subgroups and are conjugate in $G$ \cite[Prop.\ 3.10.2]{Benson}.  Furthermore, there is an indecomposable finitely generated $RQ$-module $S$ (unique up to conjugation in $\norm{G}(Q)$) such that $U\ds S\ind{G}$, called a \emph{source} of $U$ (the notation $X\ds Y$ indicates that the module $X$ is isomorphic to a direct summand of $Y$).  If $U$ has source the trivial module $R$, we call $U$ a \emph{trivial source module}.  Given a finitely generated indecomposable $R[\norm{G}(P)]$-lattice $E$ with vertex $P$, denote by $M(P,E)$ its Green correspondent \cite[Sec.\ 3.12]{Benson}.  Thus $M(P,E)$ is an indecomposable $RG$-lattice with vertex $P$ such that $M(P,E)\ds E\ind{G}$ and $E\ds M(P,E)\res{\norm{G}(P)}$.  Furthermore, $M(P,E)$ is the only summand of $E\ind{G}$ with vertex $P$ and $E$ is the only summand of $M(P,E)\res{\norm{G}(P)}$ with vertex $P$, the other summands of $E\ind{G}$ being projective relative to strictly smaller subgroups and the other summands of $M(P,E)\res{\norm{G}(P)}$ being projective relative to proper conjugates of $P$ \cite[Thm 3.12.2]{Benson}.  More generally, when $E$ is a product of finitely generated indecomposable $R[\norm{G}(P)]$-modules with vertex $P$, denote by $M(P,E)$ the product of the corresponding Green correspondents.  Note that $E$ is still a (continuous) summand of $M(P,E)\res{\norm{G}(P)}$ and $M(P,E)$ is still a (continuous) summand of $E\ind{G}$ -- this can be seen by direct calculation with the product topology.  

Let $E$ be a product of indecomposable trivial source $R[\norm{G}(P)]$-modules with vertex $P$ and let the $RG$-module $M(P,E)$ be as above.  Let $j : M(P,E)\to E\ind{G}$ be a split inclusion and define further the split inclusion $s : E\to E\ind{G}\res{\norm{G}(P)}$ by $s(e) = 1\ctens e$.

\begin{lemma}\label{Lemma map yields map from GC}
\begin{enumerate}
    \item The maps $j^{[P]}, s^{[P]}$ are isomorphisms.
    \item If $M$ is an $RG$-module and $f : E \to M\res{\norm{G}(P)}$ is an $R[\norm{G}(P)]$-module homomorphism, there exists an $RG$-module homomorphism $\widehat{f} : M(P,E)\to M$ such that
$$\widehat{f}^{[P]} = f^{[P]}\gamma,$$
where $\gamma$ is an isomorphism from $M(P,E)^{[P]}$ to $E^{[P]} = E/\pi E$.
\end{enumerate}
\end{lemma}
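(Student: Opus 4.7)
My plan is to exploit Mackey's decomposition together with the following vanishing principle for the Brauer-style quotient $(-)^{[P]}$: if $X$ is an $RP$-module relatively $H$-projective for some proper subgroup $H<P$, then $X^{[P]}=0$. Since $X\ds X\res{H}\ind{P}$ it suffices to check the case $X=W\ind{P}_H$, and I would verify this directly: every $P$-fixed element of $W\ind{P}_H$ has the form $\sum_{g\in P/H}g\ctens w$ with $w\in W^H$, so $X^P=\tn{Tr}_H^P(X^H)\subseteq\sum_{Q<P}\tn{Tr}_Q^P(X^Q)$ and $X^{[P]}=0$.

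For part (1), I would apply Mackey to compute
$$E\ind{G}\res{P}=\bigoplus_{g\in P\backslash G/\norm{G}(P)}{}^gE\res{{}^g\norm{G}(P)\cap P}\ind{P}.$$
The identity double coset (those $g\in \norm{G}(P)$) contributes $E\res{P}\ind{P}=E$, and $s$ identifies $E$ with this summand; note that $P$ acts trivially on $E$ because $E$ is a trivial-source $R[\norm{G}(P)]$-module of vertex $P\trianglelefteq \norm{G}(P)$, so $E\ds R\ind{\norm{G}(P)}_P=R[\norm{G}(P)/P]$. For $g\notin\norm{G}(P)$ we have ${}^gP\cap P\subsetneq P$ by an order argument, and the Mackey summand ${}^gE\res{{}^g\norm{G}(P)\cap P}\ind{P}$ is relatively $({}^gP\cap P)$-projective, hence killed by $(-)^{[P]}$. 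This makes $s^{[P]}$ an isomorphism. For $j^{[P]}$, the Green correspondence furnishes a continuous decomposition $E\ind{G}=M(P,E)\oplus Y$ in which every constituent of $Y$ has vertex contained in some ${}^gP\cap P$ with $g\notin\norm{G}(P)$, so the same vanishing principle gives $Y^{[P]}=0$ and $j^{[P]}$ is an isomorphism.

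For part (2), I would take the Frobenius mate $\widetilde{f}:E\ind{G}\to M$ of $f:E\to M\res{\norm{G}(P)}$, defined on pure tensors by $g\ctens e\mapsto gf(e)$; by construction $\widetilde{f}\circ s=f$. Setting $\widehat{f}:=\widetilde{f}\circ j:M(P,E)\to M$ and passing to $[P]$-quotients yields
$$\widehat{f}^{[P]}=\widetilde{f}^{[P]}\circ j^{[P]}=f^{[P]}\circ(s^{[P]})^{-1}\circ j^{[P]}=f^{[P]}\circ\gamma,$$
where $\gamma:=(s^{[P]})^{-1}\circ j^{[P]}:M(P,E)^{[P]}\to E^{[P]}$ is an isomorphism by part (1). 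The stated identification $E^{[P]}=E/\pi E$ follows from the triviality of the $P$-action on $E$: $E^Q=E$ for all $Q\leqslant P$, and $\tn{Tr}_Q^P$ acts as multiplication by $[P:Q]\in\pi R$ for $Q<P$, so $\sum_{Q<P}\tn{Tr}_Q^P(E^Q)\subseteq\pi E$.

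The main obstacle is that $E$ is a \emph{product} rather than a finite sum of indecomposables in the pseudocompact category. The Green correspondence decomposition $E\ind{G}=M(P,E)\oplus Y$ as a continuous splitting is justified by the paragraph immediately preceding the lemma, and the Mackey decomposition of $E\ind{G}\res{P}$ holds topologically because induction from $\norm{G}(P)$ to $G$ is a finite-index construction; with these topological points in hand, the vanishing of the extraneous summands under $(-)^{[P]}$ reduces summand-wise to the vanishing principle above.
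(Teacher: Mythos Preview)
Your proposal is correct and follows essentially the same strategy as the paper: both arguments establish that the complementary summands to $s(E)$ and $j(M(P,E))$ in $E\ind{G}$ are killed by $(-)^{[P]}$ via Mackey and relative projectivity with respect to proper subgroups of $P$, and part (2) is proved identically by taking the Frobenius mate $\widetilde f$ and setting $\widehat f=\widetilde f\circ j$, $\gamma=(s^{[P]})^{-1}j^{[P]}$. The only cosmetic difference is that you restrict $E\ind{G}$ directly to $P$ and invoke your cleanly stated vanishing principle, whereas the paper restricts to $\norm{G}(P)$ first and computes $R[\norm{G}(P)/H]^{[P]}$ explicitly; since $(-)^{[P]}$ depends only on the restriction to $P$, these are the same argument in slightly different packaging.
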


\begin{proof}
\begin{enumerate}
    \item We show the result for $s$, the argument for $j$ being similar.  Let $E\ind{G}\res{\norm{G}(P)} = s(E) \oplus Y$.  To prove the claim, it is sufficient to show that $Y^{[P]} = 0$.   By the Green correspondence, an indecomposable summand of $Y$ is a direct summand of a module of the form $R[\norm{G}(P)/H]$ for $H$ a subgroup of some proper conjugate ${}^{g}\!P$ of $P$.  We have
$$R[\norm{G}(P)/H]^{[P]}\res{P} = \left(R\ind{\norm{G}(P)}_H\res{P}\right)^{[P]}
\iso \bigoplus_{x\in P\backslash \norm{G}(P)/ H}\left(R\res{{}^x\!H\cap P}\ind{P}\right)^{[P]} = 0,$$
because each ${}^x\!H\cap P \subseteq {}^g\!P\cap P$ is a proper subgroup of $P$.

\item The map $f$ extends by Frobenius reciprocity to a unique $RG$-module homomorphism $\widetilde{f}: E\ind{G}\to M$ given on pure tensors by $\widetilde{f}(g\ctens e) = gf(e)$.  Note that $\widetilde{f}s=f$.  Define $\widehat{f} = \widetilde{f}j$.  We have
$$\widehat{f}^{[P]} = \widetilde{f}^{[P]}j^{[P]} = \widetilde{f}^{[P]}s^{[P]}((s^{[P]})^{-1}j^{[P]}) = f^{[P]}\gamma,$$
where $\gamma = (s^{[P]})^{-1}j^{[P]}$.
\end{enumerate}
\end{proof}

Let $X$ be a pseudocompact $RG$-module.  Given a $p$-subgroup $P$ of $G$, let $\varphi_P : E_P\to X^{\db{P}}$ be the projective cover of $X^{\db{P}}$ as an $R[\norm{G}(P)/P]$-module. By projectivity of $E_P$, $\varphi_P$ lifts to a homomorphism $E_P\to X^P$.  Composing with the inclusion we obtain a homomorphism $\theta_P : E_P\to X\res{\norm{G}(P)}$, which by Lemma \ref{Lemma map yields map from GC} yields an $RG$-module homomorphism $\widehat{\theta_P} : M(P,E_P)\to X$ such that $\widehat{\theta_P}^{[P]} = \theta_P^{[P]}\gamma_P$ for some isomorphism $\gamma_P : M(P,E_P)^{[P]}\to E_P^{[P]}$.  Note that $\varphi_P^{\db{P}}\gamma_P^{\db{P}} = \theta_P^{\db{P}}\gamma_P^{\db{P}} =  \widehat{\theta_P}^{\db{P}}$.
Denote by 
$$\theta : C = \prod_{P}M(P,E_P) \to X$$
the homomorphism that acts as $\widehat{\theta_P}$ on the factor $M(P,E_P)$, as $P$ ranges through a set of representatives of the $G$-conjugacy classes of $p$-subgroups of $G$.  For each $P$, denote by $m_P: M(P,E_P)\to C$ the inclusion, noting that $m_P^{\db{P}}$ is an isomorphism by Lemma \ref{Lemma db Q killed for non conjugate perm module}.

\begin{theorem}\label{Theorem explicit permutation cover}
The map $\theta$ is a permutation cover of $X$.
\end{theorem}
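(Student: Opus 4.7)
The plan is to verify three conditions on $\theta:C\to X$: (a) $C$ is a pseudocompact permutation module, (b) $\theta$ is a permutation precover, and (c) $\theta$ is minimal, in the sense that every continuous endomorphism $\gamma:C\to C$ with $\theta\gamma=\theta$ is an automorphism. Conditions (a) and (b) are relatively routine; (c) carries the substantive content. Throughout let $M=\prod_{H\leqslant_G G}R[G/H]$, so that permutation modules are exactly the objects of $\tn{Add}^{\mathcal{C}}(M)$.

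For (a), each $E_P$ is a projective $R[\norm{G}(P)/P]$-module and hence a continuous summand of a product of copies of $R[\norm{G}(P)/P]$. After inflation and induction, $E_P\ind{G}$ is a summand of a product of copies of $R[G/P]$, and $M(P,E_P)$, being a summand of $E_P\ind{G}$, therefore lies in $\tn{Add}^{\mathcal{C}}(M)$. By Theorem \ref{thm limits of Add are in Add}, the product $C=\prod_P M(P,E_P)$ also lies in $\tn{Add}^{\mathcal{C}}(M)$. For (b), Lemmas \ref{perm precovers supersurjective} and \ref{Lemma check supersurjective} reduce the precover property to the surjectivity of $\theta^{\db Q}$ for every $p$-subgroup $Q\leqslant G$. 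Lemma \ref{Lemma db Q killed for non conjugate perm module} (combined with (a)) kills the contribution of $M(P,E_P)^{\db Q}$ whenever $P$ is not $G$-conjugate to $Q$; picking $Q$ in the set of representatives, $m_Q^{\db Q}:M(Q,E_Q)^{\db Q}\to C^{\db Q}$ is then an isomorphism. The identity $\theta m_Q=\widehat{\theta_Q}$ together with $\widehat{\theta_Q}^{\db Q}=\varphi_Q^{\db Q}\gamma_Q^{\db Q}$ (noted just before the statement) reduces the surjectivity of $\theta^{\db Q}$ to that of $\varphi_Q^{\db Q}$, which is immediate since $\varphi_Q$ is a surjective projective cover.

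The main obstacle is (c), which I would approach through the equivalence $U:\tn{Add}^{\mathcal{C}}(M)\to\tn{Proj}^{\mathcal{C}}\dash E$ of Proposition \ref{Prop restriction to Add and Proj Equiv}. The proof of Theorem \ref{covers exist} constructs the permutation cover of $X$ as $V$ applied to the projective cover of $U(X)$, so it suffices to show that $U(\theta):U(C)\to U(X)$ is a projective cover of $U(X)$ in $\tn{Mod}^{\mathcal{C}}\dash E$. Projectivity of $U(C)$ is automatic, and surjectivity of $U(\theta)$ is equivalent to the precover property proved in (b); the remaining minimality condition---that $\ker U(\theta)\subseteq\tn{rad}(E)U(C)$---I would extract from the projective cover property of the individual $\varphi_Q:E_Q\to X^{\db Q}$ as $R[\norm{G}(Q)/Q]$-modules. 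Via the Green correspondence and the identifications $C^{\db Q}\iso E_Q^{\db Q}$, the inclusion $\ker\varphi_Q\subseteq\tn{rad}(R[\norm{G}(Q)/Q])E_Q$ should transport to the required radical inclusion for $U(\theta)$. The hardest part will be making this transport precise: the $\db Q$-functor mixes the global $RG$-structure with the local $R[\norm{G}(Q)/Q]$-structure, and one must verify that the identifications through $m_Q^{\db Q}$ and $\gamma_Q^{\db Q}$ genuinely relate the relevant radicals on the two sides, so that Nakayama's Lemma for the pseudocompact algebra $E$ can conclude that $U(\gamma)$, and hence $\gamma$, is an automorphism.
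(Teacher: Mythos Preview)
Your treatment of (a) and (b) is correct and matches the paper's argument essentially line for line: the paper checks the precover property by the same chain $\theta^{\db P}m_P^{\db P}=\widehat{\theta_P}^{\db P}=\varphi_P^{\db P}\gamma_P^{\db P}$, invoking Lemmas \ref{perm precovers supersurjective}, \ref{Lemma check supersurjective} and \ref{Lemma db Q killed for non conjugate perm module}.

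For (c) you diverge from the paper, and the divergence is precisely where your gap lies. You propose to pass through the equivalence $U:\tn{Add}^{\mathcal{C}}(M)\to\tn{Proj}^{\mathcal{C}}\dash E$ and check that $\ker U(\theta)\subseteq\tn{rad}(E)\,U(C)$ by ``transporting'' the radical inclusions $\ker\varphi_Q\subseteq\tn{rad}\bigl(R[\norm{G}(Q)/Q]\bigr)E_Q$. But you never supply the transport, and it is not at all formal: you would need an explicit description of $\tn{rad}(E)$ for $E=\End_{RG}\bigl(\bigoplus_H R[G/H]\bigr)$ in terms of the functors $(-)^{\db Q}$, and a way to recover the $E$-module $U(C)=\Hom_{RG}(M,C)$ from the family $C^{\db Q}$. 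Your identification $C^{\db Q}\iso E_Q^{\db Q}$ is a quotient of $E_Q$, not $E_Q$ itself, so even the bookkeeping of what exactly corresponds to what on the two sides is unfinished. As written, (c) is a plausible programme rather than a proof.

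The paper sidesteps the $E$-module side entirely and argues the minimality directly with the $\db{P}$-functors. Suppose $\theta$ is not a cover; then there is a permutation precover $d:D\to X$ and a \emph{non-surjective} $f:D\to C$ with $\theta f=d$ (take $D=C$, $d=\theta$, and $f$ a non-automorphism witnessing failure; non-surjectivity follows since pseudocompact projectives over $E$ are Hopfian). By Lemma \ref{Lemma check supersurjective} there is a $p$-subgroup $P$ with $f^{\db P}$ not surjective. Let $\rho:S\to D^{\db P}$ be the projective cover over $R[\norm{G}(P)/P]$ and lift through the chain $E_P\xrightarrow{\alpha}E_P^{\db P}\xrightarrow{(\gamma_P^{-1})^{\db P}}M(P,E_P)^{\db P}\xrightarrow{m_P^{\db P}}C^{\db P}$ to get $\delta:S\to E_P$ with $\varphi_P\delta=d^{\db P}\rho$. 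Since $d^{\db P}\rho$ is a surjection from a projective onto $X^{\db P}$ and $\varphi_P$ is its projective cover, $\delta$ is surjective; hence the composite $S\to C^{\db P}$ through $\delta$ is surjective, contradicting that it equals $f^{\db P}\rho$. This uses the minimality of $\varphi_P$ exactly once, at the level of $R[\norm{G}(P)/P]$-modules, and never needs to see $\tn{rad}(E)$. Note also that the paper's argument is self-contained (it does not invoke Theorem \ref{covers exist}), whereas your route does.
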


\begin{proof}
By Lemmas \ref{perm precovers supersurjective} and \ref{Lemma check supersurjective}, to check that $\theta$ is a precover we must check that $\theta^{\db{P}} : C^{\db{P}}\to X^{\db{P}}$ is surjective for any $p$-subgroup $P$ of $G$.  But
$$\theta^{\db{P}}m_P^{\db{P}} = \widehat{\theta_P}^{\db{P}} =  \varphi_P^{\db{P}}\gamma_P^{\db{P}},$$
and hence $\theta^{\db{P}}$ is surjective. 

It remains to check that $\theta$ is a cover.  If not, there exists a permutation precover $d : D\to X$ and a non-surjective homomorphism $f : D\to C$ such that $\theta f = d$.  By Lemma \ref{Lemma check supersurjective} there is a $p$-subgroup $P$ such that $f^{\db{P}}: D^{\db{P}}\to C^{\db{P}}$ is not surjective.  For this $P$, let $\rho: S\to D^{\db{P}}$ be a projective cover of $D^{\db{P}}$ as an $R[\norm{G}(P)/P]$-module.  By the projectivity of $S$ we obtain a map $\delta$ yielding the following commutative diagram:
$$
 \xymatrix{    
 &&&& S \ar[d]^{\rho}\ar[lllldd]_{\delta} & \\
 &&&& D^{\db{P}}\ar[d]^{f^{\db{P}}}\ar[drr]^{d^{\db{P}}} & \\
E_P \ar[r]^{\alpha}\ar@/_1.5pc/[rrrrrr]_{\varphi_P} & E_P^{\db{P}} \ar[rr]^-{(\gamma_P^{-1})^{\db{P}}} && M(P,E_P)^{\db{P}} \ar[r]^-{m_P^{\db{P}}} & C^{\db{P}}\ar[rr]^{\theta^{\db{P}}} && X^{\db{P}}
}
$$
The map $\alpha$ is the canonical projection from $E_P = {E_P}^P$ to $E_P^{\db{P}}$.  The lower shape commutes because $\varphi_P = \varphi_P^{\db{P}}\alpha$ and $\varphi_P^{\db{P}}\gamma_P^{\db{P}} = \theta^{\db{P}}m_P^{\db{P}}$.  Note that $\delta$ is surjective because $\varphi_P\delta = d^{\db{P}}\rho$ is a surjective map from the projective $R[\norm{G}(P)/P]$-module $S$ to $X^{\db{P}}$ and $\varphi_P$ is a projective cover of $X^{\db{P}}$.  This yields a contradiction, because the path from $S$ to $C^{\db{P}}$ going via $\delta$ is surjective, being a composition of surjective maps, while $f^{\db{P}}\rho$ is not surjective.
\end{proof}

The proof of Theorem \ref{Theorem explicit permutation cover} does not suppose the existence of covers and thus provides a proof of the existence of permutation covers independent of Theorem \ref{covers exist}.  

We now use the explicit description of the permutation cover to obtain a characterization of $R$-permutation modules.  Here and in future, if $U$ is an $RG$-module then $\overline{U}$ denotes the module $U/\pi U$, which may be treated as an $RG$-module or as an $(R/\pi)G$-module, depending on context.

\begin{prop}\label{Prop coflasque lifting to permutation}
Let $R$ be a complete discrete valuation ring whose residue class field $k = R/\pi$ has characteristic $p$.  If $X$ is a coflasque pseudocompact $RG$-lattice and $\overline{X} = X/\pi X$ is a $k$-permutation $kG$-module, then $X$ itself is an $R$-permutation $RG$-module.
\end{prop}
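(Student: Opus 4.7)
The plan is to apply Theorem \ref{Theorem explicit permutation cover} to $X$, reduce the resulting permutation cover modulo $\pi$, and conclude via Nakayama's Lemma. Let $\theta:C\to X$ be the permutation cover of $X$ given by that theorem, write $K=\ker\theta$, and consider the short exact sequence $0\to K\to C\to X\to 0$ of pseudocompact $RG$-lattices. Since $C\in\tn{Add}(\prod_H R[G/H])$ by construction, $C$ is an $R$-permutation module, so it suffices to show that $\theta$ is an isomorphism, equivalently that $K=0$.

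The first step is to show that $X^{\db{P}}\cong \overline X^{\db{P}}$ as $k[\norm{G}(P)/P]$-modules for every $p$-subgroup $P\leqslant G$. Taking $H$-invariants of $0\to X\xrightarrow{\pi} X\to\overline X\to 0$ and using the coflasqueness hypothesis $H^1(H,X)=0$ yields $\overline{X^H}\cong (\overline X)^H$ for every $H\leqslant G$. Substituting this into the defining quotient for $X^{\db{P}}$, and noting that one already kills $\pi X^P$ in that quotient, gives the identification.

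The second and central step is to show that the explicit cover construction is preserved under reduction, so that $\overline\theta:\overline C\to\overline X$ is exactly the permutation cover of $\overline X$ as a $kG$-module produced by Theorem \ref{Theorem explicit permutation cover} applied over $k$. The projective cover $\varphi_P:E_P\onto X^{\db{P}}$ over $R[\norm{G}(P)/P]$ reduces to the projective cover $\overline{E_P}\onto X^{\db{P}}$ over $k[\norm{G}(P)/P]$, since projectivity is preserved and the kernel stays inside the radical. The Green correspondence also commutes with reduction for trivial source lattices, giving $\overline{M(P,E_P)}\cong M(P,\overline{E_P})$; the product formula $M(P,\prod_iE_i)=\prod_iM(P,E_i)$ noted before Lemma \ref{Lemma map yields map from GC} then extends this to the possibly infinite-rank case.

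Since $\overline X$ is by hypothesis a $k$-permutation module, the identity $\id_{\overline X}$ is itself a permutation cover of $\overline X$. Uniqueness of covers forces $\overline\theta$ to be an isomorphism, and hence $\overline K=K/\pi K=0$. Because $K$ is a closed submodule of the pseudocompact module $C$ and therefore itself pseudocompact, Nakayama's Lemma in the form of \cite[Lem.\ 1.4]{Brumer} gives $K=0$, so $\theta$ is an isomorphism and $X\cong C$ is an $R$-permutation module. The main obstacle will be the second step, specifically the compatibility of the Green correspondence with reduction mod $\pi$ in the possibly infinite-rank pseudocompact setting; this rests on the uniqueness of the lift of a trivial source module from characteristic $p$ to mixed characteristic, applied factor-by-factor.
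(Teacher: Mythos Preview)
Your proposal is correct and follows essentially the same approach as the paper: both build the explicit permutation cover of $X$, use coflasqueness to identify $X^{\db{P}}$ with $\overline{X}^{\db{P}}$, invoke the compatibility of projective covers and of Green correspondents for trivial source modules with reduction modulo $\pi$ to see that $\overline{\theta}$ is the permutation cover of $\overline{X}$, and then conclude via Nakayama that the kernel vanishes. The paper phrases the last step as ``the kernel is a pure submodule contained in $\pi P_X$, hence zero'', which is just a restatement of your $K/\pi K=0\Rightarrow K=0$.
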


\begin{proof}
One may check using the explicit construction of the permutation cover $\theta_X:P_X\to X$ that its reduction $\overline{\theta_X}:\overline{P_X}\to \overline{X}$ modulo $\pi$ is the permutation cover of $\overline{X}$ using the following facts (cf.\ \cite{Broue}): 
\begin{itemize}
\item For any finite group $H$, reduction modulo $\pi$ induces a natural bijection between projective $RH$-modules and projective $kH$-modules;

\item Since $\pi X$ (being isomorphic to $X$) is coflasque, applying $(-)^P$ to the short exact sequence $0\to \pi X\to X\to \overline{X}\to 0$ yields the isomorphism
$$\overline{X}^P \iso X^P/(\pi X)^P = X^P/\pi X^P = \overline{X^P}.$$
Hence $X^{\db{P}} = \overline{X}^{\db{P}}$.  

\item Reduction modulo $\pi$ induces a natural bijection between Green correspondents of indecomposable $R$-permutation modules and those of the corresponding indecomposable $k$-permutation modules.

\end{itemize}
But $\overline{\theta_X}$ is an isomorphism by hypothesis.  So the kernel of $\theta_X$ is a pure submodule of $P_X$ contained in $\pi P_X$, and hence is $0$.  That is, $P_X\iso X$ and $X$ is a permutation module.
\end{proof}

\section{A theorem of Cliff and Weiss}\label{Section Cliff and Weiss}

From now on, $R$ will be a complete discrete valuation ring whose field of fractions has characteristic 0 and whose residue class field $k = R/\pi$ has prime characteristic $p$.  Given an $RG$-lattice $L$ we want to find a small power $\pi^e$ such that when $L/\pi^eL$ is a direct summand of a permutation module over $R/\pi^e$, then $L$ is coflasque.  We adapt the argument in \cite{Cliff and Weiss} to see that if some given $e$ has this property for every lattice for a cyclic group of order $p$, then this same $e$ will work for the whole group $G$.  Indeed, we may restrict to the case where $G$ is a $p$-group because $L\ds L\res{P}\ind{G}$ for any Sylow $p$-subgroup of $G$ so that, by the Eckmann-Shapiro Lemma and Mackey decomposition, we need only check that $L\res{P}$ is coflasque.  Suppose that $G$ is a $p$-group and proceed by induction on its order, the case of order $p$ being by hypothesis.  Let $Q$ be a central subgroup of order $p$ in $G$.  Since $H^1(Q,L)=0$ the sequence
$$0\to L^Q \xrightarrow{\pi^e} L^Q\to (L/\pi^eL)^Q\to 0$$
is exact, so that $L^Q/\pi^eL^Q \iso (L/\pi^eL)^Q$ is a permutation $(R/\pi^e)[L/Q]$-lattice.  By induction, $L^Q$ is coflasque as a module for $G/Q$.  Now, the inflation-restriction exact sequence (see \cite[6.8.3]{Weibel})
$$0\to H^1(G/Q,L^Q)\to H^1(G,L)\to H^1(Q,L)^{G/Q}$$
shows that $H^1(G,L)=0$, as required.

\begin{lemma}\label{cyclic group projective}
Let $G$ be a finite group and let $L$ be an $RG$-lattice in $RG\dash\tn{Mod}^{\mathcal{C}}$.  Suppose that $F$ is a free $kG$-submodule of $L/\pi L$.  Then there is a free $RG$-lattice $\widetilde{F}\leqslant L$ that is a summand of $L$ and such that $\widetilde{F}/\pi\widetilde{F} \iso (\widetilde{F}+\pi L)/\pi L = F$.
\end{lemma}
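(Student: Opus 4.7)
The plan is to lift a basis of $F$ to $L$, construct $\widetilde{F}$ as the closed $RG$-submodule generated by the lifts, verify freeness via Nakayama, establish purity in order to extract an $R$-linear splitting, and then upgrade to an $RG$-linear splitting using the symmetric Frobenius structure of $RG$.

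More concretely, I would choose a free $kG$-basis $\{\bar{x}_i\}_{i\in I}$ of $F$ together with lifts $x_i\in L$, and let $\phi : \prod_{i\in I}RG\to L$ be the continuous $RG$-linear map sending the $i$-th standard unit to $x_i$, with image $\widetilde{F}$.  Its reduction modulo $\pi$ lands in $L/\pi L$ with image the closed $kG$-submodule of $L/\pi L$ generated by the $\bar{x}_i$, namely $F$; by the free $kG$-basis hypothesis this reduced map is an isomorphism onto $F$.  Injectivity of $\phi$ itself then follows from the $R$-torsion-freeness of $L$ and the completeness of $R$, by iteratively extracting factors of $\pi$ from a hypothetical nonzero kernel element.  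Consequently $\widetilde{F}\iso\prod_I RG$ is a free $RG$-lattice with $\widetilde{F}/\pi\widetilde{F}\iso F$.  Purity $\widetilde{F}\cap \pi L = \pi\widetilde{F}$ is then immediate: any element of $\widetilde{F}\cap \pi L$ reduces to zero in $L/\pi L$, hence to zero in $F\iso \widetilde{F}/\pi\widetilde{F}$, so already lies in $\pi\widetilde{F}$.

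With purity established, $L/\widetilde{F}$ is a torsion-free pseudocompact $R$-module; over the complete DVR $R$ the indecomposable pseudocompact modules are $R$ and the $R/\pi^n$, so torsion-freeness forces $L/\widetilde{F}$ to be a pseudocompact $R$-lattice.  The short exact sequence $0\to \widetilde{F}\to L\to L/\widetilde{F}\to 0$ of pseudocompact $R$-modules therefore splits, giving a continuous $R$-linear retraction $\rho : L \to \widetilde{F}$.  To upgrade $\rho$, I will exploit that $RG$ is a symmetric Frobenius $R$-algebra: with the form $\lambda : RG \to R$ extracting the coefficient of $1$, there is a natural isomorphism $\tn{Hom}^{\tn{cts}}_{RG}(L,RG)\iso \tn{Hom}^{\tn{cts}}_R(L,R)$, whose inverse sends $\psi$ to $l\mapsto \sum_{g\in G}\psi(g^{-1}l)\,g$.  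For each $i\in I$, define $\psi_i : \widetilde{F}\to R$ to pick out the coefficient of $x_i$ in the $R$-basis $\{g x_j\}_{g\in G,\,j\in I}$ of $\widetilde{F}$, extend $\psi_i$ to $L$ via $\rho$, and let $\varphi_i : L\to RG$ be the corresponding $RG$-linear map.  A short calculation gives $\varphi_i(x_j) = \delta_{ij}$, and assembling the $\varphi_i$ into a continuous map $L\to \prod_{i\in I}RG\iso \widetilde{F}$ yields the required $RG$-linear retraction, completing the proof that $\widetilde{F}$ is a summand of $L$.

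The main obstacle I anticipate is verifying cleanly, within the pseudocompact setting, that the torsion-free quotient $L/\widetilde{F}$ is genuinely a pseudocompact $R$-lattice and that the assembled retraction remains continuous; both points rely on the structure theory of pseudocompact modules over a complete DVR.  The remaining arguments are Nakayama-style bookkeeping together with the standard Frobenius trick for $RG$.
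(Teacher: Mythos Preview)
Your proof is correct and follows the same overall arc as the paper's: lift the free module via projectivity, verify that injectivity modulo $\pi$ forces injectivity, establish purity to obtain an $R$-linear splitting, and then upgrade to an $RG$-linear splitting.  The one substantive difference is in the final step.  The paper observes that a free $RG$-lattice, being induced from the trivial subgroup, is also \emph{coinduced} (induction and coinduction coincide for finite groups) and hence injective relative to the trivial subgroup; any inclusion that splits over $R$ therefore splits over $RG$ automatically.  Your explicit construction via the symmetric Frobenius form $\lambda$ is precisely what underlies this relative-injectivity statement, unpacked by hand: the isomorphism $\Hom_R(L,R)\iso\Hom_{RG}(L,RG)$ you invoke is another way of saying that $RG\iso\Hom_R(RG,R)$ as $RG$-bimodules, which is the content of ``induction $=$ coinduction''.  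The paper's phrasing is slicker and immediately suggests the generalization in the Remark following the lemma (replacing free by $V_H\ind{G}_H$); your version is more elementary and self-contained, at the cost of some dual-basis bookkeeping.  The continuity concerns you flag are not serious: each $\varphi_i$ is a finite sum of compositions of continuous maps, and a map into a product is continuous once each component is.
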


\begin{proof}
This is well known.  Let $\widehat{F}\to F$ be the projective cover of $F$ as an $RG$-module.  The composition $\widehat{F}\to F\to L/\pi L$ lifts to $\gamma:\widehat{F}\to L$ since $\widehat{F}$ is projective.  Let $\widetilde{F} = \gamma(\widehat{F})$.  Then $\widetilde{F}$ is pure over $R$ in $L$ and so is an $R$-summand of $L$.  The induction and coinduction functors being isomorphic for finite groups, and $\gamma$ being injective since it is injective modulo $\pi$, mean that $\widetilde{F}$ is injective relative to the the trivial subgroup of $G$ (see the discussion after Lemma \ref{Lemma db Q killed for non conjugate perm module}).  Thus, being an $RG$-submodule and an $R$-summand of $L$, it is an $RG$-summand.
\end{proof}

\noindent{\emph{Remark}:} Let $G$ be a $p$-group.  Replacing $F$ with a permutation module $V_H\ind{G}_H\leqslant L/\pi L$, let $\widehat{V}_H$ be the projective cover of $V_H$ as an $R$-module, treated as a trivial $RH$-module.  Then $\widehat{V}_H$ lifts to $\widetilde{V}_H\leqslant L$ if $H^1(H,L)=0$, and thus we get a submodule $\widetilde{V}_H\ind{G}_H\leqslant L$.  This is another way to prove Proposition \ref{Prop coflasque lifting to permutation}.

\medskip

Let $C$ be a group of order $p$ and let $L$ be an $RC$-lattice such that $L/\pi^eL$ is a permutation $(R/\pi^e)C$-lattice.  The module $L/\pi^eL$ decomposes as the sum of a free $(R/\pi^e)C$-lattice and a trivial $(R/\pi^e)C$-lattice.  By Lemma \ref{cyclic group projective} the free part lifts to a summand of $L$, whose complement we denote by $M$.  Note that $M/\pi^eM$ is trivial. 

Observe that a short exact sequence of lattices $J\to M\to X$ is necessarily split over $R$, so that the reduction $J/\pi^eJ \to M/\pi^eM\to X/\pi^eX$ is exact, hence if $M/\pi^eM$ is trivial then so are $J/\pi^eJ$ and $X/\pi^eX$.  The $RC$-lattice $M$ is an inverse limit of lattices of finite rank by Lemma \ref{RG lattice limit of finite rank RG lattices} and these quotients are necessarily trivial modulo $\pi^e$ by the above observation.  A lattice of finite rank is an iterated extension of irreducible lattices \cite[Cor.\ 23.15 and \S 25]{CR vol1}, where by irreducible lattice we mean a lattice $S$ such that $K\otimes_RS$ is irreducible as a $KC$-module, ($K$ the field of fractions of $R$).  Thus if we express each finite rank lattice as an iterated extension of irreducible lattices, then each of these must also be trivial modulo $\pi^e$.

The irreducible $KC$-modules are either trivial or of the form $K(\omega)$ for $\omega$ a primitive $p$th root of unity, where a given generator $c$ of $C$ acts by multiplication by $\omega^r$ with $r$ not divisible by $p$.  A lattice in such a module corresponds to a fractional ideal of $K(\omega)$ and such ideals are principal.  Thus, such a lattice is isomorphic to $R[\omega]$.

We have the following two cases:
\begin{itemize}
\item $\omega\not\in K$: Then the extension $K(\omega)/K$ is ramified and so $\omega-1\not\in \pi R[\omega]$.  In this case we can take $e=1$ so that the irreducible lattices are trivial.  But extensions and inverse limits of trivial lattices are trivial, so that our infinite lattice is also trivial.
\item $\omega\in K$, hence $\omega\in R$: Then $\omega-1\not\in \langle(\omega-1)\pi\rangle$, so the action of $C$ on $R[\omega]$ is not trivial modulo $(\omega-1)\pi$.  If we choose $e$ so that $\langle\pi^e\rangle = \langle(\omega-1)\pi\rangle$ then the infinite lattice must have been trivial.
\end{itemize}

Putting these observations together we obtain a form of a result of \cite{Cliff and Weiss}:

\begin{theorem}\label{ThmCliffWeiss}
Let $G$ be a finite group and let $L$ be a lattice in $RG\dash\tn{Mod}^{\mathcal{C}}$.
\begin{enumerate}
\item If $R$ does not contain a primitive $p$th root of unity then $L$ is an $R$-permutation lattice over $R$ if, and only if, $L/\pi L$ is a $k$-permutation module.
\item If $R$ contains a primitive $p$th root of unity $\omega$ then $L$ is an $R$-permutation lattice over $R$ if, and only if, $L/(\omega-1)\pi L$ is an $R/(\omega-1)\pi$-permutation lattice.
\end{enumerate}
\end{theorem}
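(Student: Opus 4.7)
The forward direction is immediate: an $R$-permutation lattice is a continuous summand of a strict permutation module $\prod_j R\db{G/H_j}$, and reducing each factor modulo any ideal $I$ yields $(R/I)\db{G/H_j}$, so $L/IL$ is a summand of a permutation $(R/I)G$-module. The core of the theorem is the converse, for which the plan is a two-step strategy: first show $L$ is coflasque under the stated hypothesis, and then apply Proposition \ref{Prop coflasque lifting to permutation}. Here $\pi^e$ denotes $\pi$ in case (1) and a generator of $(\omega-1)\pi R$ in case (2). Since $L/\pi L$ is a further reduction of the hypothesized permutation module $L/\pi^e L$, it is automatically a $k$-permutation module, and the proposition then promotes coflasqueness to $L$ being an $R$-permutation module.

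To show coflasqueness, I would follow the sketch given just before the theorem. Using $L\ds L\res{P}\ind{G}$ for $P$ a Sylow $p$-subgroup of $G$ (via the invertibility of $[G:P]$ in $R$), together with the Eckmann-Shapiro Lemma and Mackey decomposition, we reduce to $G$ a $p$-group. We then induct on $|G|$, selecting a central subgroup $Q$ of order $p$ and invoking the inflation-restriction sequence
\[
0 \to H^1(G/Q, L^Q) \to H^1(G, L) \to H^1(Q, L)^{G/Q}.
\]
The rightmost term vanishes by the base case applied to $L\res{Q}$, while $L^Q/\pi^e L^Q \iso (L/\pi^e L)^Q$ is a summand of a permutation $(R/\pi^e)(G/Q)$-module, so the inductive hypothesis applied to $L^Q$ gives the vanishing of the leftmost term as well.

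For the base case $G=C$ cyclic of order $p$, I would split off a free summand. The permutation module $L/\pi^e L$ decomposes as a free $(R/\pi^e)C$-module plus a trivial $(R/\pi^e)C$-module. By Lemma \ref{cyclic group projective} the free part lifts to a free $RC$-summand $F$ of $L$, leaving a complement $M$ with $M/\pi^e M$ trivial. Using Lemma \ref{RG lattice limit of finite rank RG lattices} I would write $M = \invlim M_i$ with each $M_i$ an $RC$-lattice of finite rank, noting that each $M_i/\pi^e M_i$ is trivial as a quotient of $M/\pi^e M$. Each $M_i$ is then an iterated extension of irreducible $RC$-lattices, which are either trivial or isomorphic to $R[\omega]$ for $\omega$ a primitive $p$-th root of unity.

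The crux, and the main technical obstacle, is the case split that verifies the chosen $e$ excludes the non-trivial irreducible $R[\omega]$ from appearing in these extensions. In case (1), $\omega\notin R$ and $K(\omega)/K$ is totally ramified, so $\omega-1\notin\pi R[\omega]$ and the action of a generator of $C$ on $R[\omega]$ is non-trivial modulo $\pi=\pi^e$. In case (2), $\omega\in R$ and $\omega-1\notin(\omega-1)\pi R=\pi^e R$, so again the action is non-trivial modulo $\pi^e$. Either way, only the trivial irreducible can occur in the iterated extensions building each $M_i$, forcing each $M_i$, and hence $M$, to be trivial. Then $L=F\oplus M$ is an $RC$-permutation module, closing the base case and completing the induction.
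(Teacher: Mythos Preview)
Your proposal is correct and follows the paper's approach essentially line for line, as you yourself acknowledge by invoking ``the sketch given just before the theorem.'' The only point worth tightening is that the isomorphism $L^Q/\pi^e L^Q \iso (L/\pi^e L)^Q$ is not automatic: it follows from the vanishing of $H^1(Q,L)$ (applied to the exact sequence $0\to L\xrightarrow{\pi^e}L\to L/\pi^e L\to 0$), which you have just obtained from the base case---so the logic is there, but make the dependence explicit.
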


Observe that, by considering lattices of rank 1, we already see that the power of $\pi$ in part 2 is the best possible.

\section{Weiss' Theorem}\label{Section Weiss}

Our goal for this section is to prove Theorem \ref{main}.  A special case of this result was proved in \cite{infgen} with $R=\Z_p$.  The missing link needed to obtain the result in generality is Theorem \ref{Weiss 3}, a generalization of \cite[Thm 3]{weiss} for infinitely generated lattices. Throughout this section, $G$ is a finite $p$-group and $R$ is a complete discrete valuation ring whose residue class field has characteristic $p$ and whose field of fractions has characteristic 0.  Denote by $\pi$ a generator of the maximal ideal of $R$ and let $k = R/\pi$.

If $U$ is an $RG$-module, recall that by $\overline{U}$ we denote the $kG$-module $U/\pi U$.  When $R$ contains a primitive $p$th  root of unity $\omega$, let $\widetilde{R} = R/(\omega-1)$ and denote by $\widetilde{U}$ the $\widetilde{R}G$-module $U/(\omega-1)U$.  For $S$ any quotient ring of $R$ (including $R$), a pseudocompact $SG$-lattice is said to be \emph{diagonal} if it is isomorphic to a direct product of indecomposable lattices of rank $1$ over $S$ (diagonal is thus a special case of monomial).

\begin{lemma}\label{Lemma diagonal sequence splits}
Suppose that $R$ contains a primitive $p$th root of unity $\omega$.  If a short exact sequence of pseudocompact $\widetilde{R}G$-modules $0\to X'\xrightarrow{s} X\xrightarrow{h}X''\to 0$ splits as $\widetilde{R}$-modules and if the middle term is diagonal, then the sequence splits as $\widetilde{R}G$-modules. 
\end{lemma}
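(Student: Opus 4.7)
First, observe that $\widetilde{R} = R/(\omega-1)$ has characteristic $p$: evaluating $\Phi_p(x) = \prod_{i=1}^{p-1}(x-\omega^i)$ at $x=1$ yields $p = u(\omega-1)^{p-1}$ for a unit $u \in R$, so $p = 0$ in $\widetilde{R}$. Writing $(\omega-1)R = \pi^f R$, the ring $\widetilde{R} = R/\pi^f$ is a local pseudocompact ring with nilpotent maximal ideal $\mathfrak{m}$. Any character $\chi: G \to \widetilde{R}^\times$ of the $p$-group $G$ therefore satisfies $\chi(g) \in 1+\mathfrak{m}$: the identity $\chi(g)^{p^n} = 1$ in characteristic $p$ forces $(\chi(g)-1)^{p^n} = 0$, placing $\chi(g)-1$ in $\tn{Nil}(\widetilde{R}) = \mathfrak{m}$.

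Write $X = \prod_{i \in I} L_i$ with each $L_i \iso \widetilde{R}$ carrying $G$-action via a character $\chi_i \in 1+\mathfrak{m}$. The splitting hypothesis makes $X$ a free $\widetilde{R}$-module, so $X'$ and $X''$ are $\widetilde{R}$-free as well. Fix an $\widetilde{R}$-linear section $\sigma: X'' \to X$ of $h$. The obstruction to $G$-equivariance is the $1$-cocycle $\delta(g) = g\sigma - \sigma g$ valued in $\tn{Hom}_{\widetilde{R}}(X'', X')$; its class in $H^1(G, \tn{Hom}_{\widetilde{R}}(X'', X'))$ is exactly the extension class of the sequence, and the task is to show it vanishes.

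The plan is to induct on the length $\ell$ of $\widetilde{R}$. Base case $\ell = 1$: $\mathfrak{m} = 0$, so every $\chi_i$ is trivial, $X$ (and hence $X'$ and $X''$) is a trivial $kG$-module, and $\sigma$ is automatically $G$-equivariant. For $\ell \geq 2$, the reduction modulo $\mathfrak{m}^{\ell-1}$ gives a short exact sequence over $\widetilde{R}/\mathfrak{m}^{\ell-1}$ whose middle term is again diagonal (the characters descend) and which is still split as $(\widetilde{R}/\mathfrak{m}^{\ell-1})$-modules, so by induction it is $(\widetilde{R}/\mathfrak{m}^{\ell-1})G$-split. Complementarily, the top slice
\[
0 \to \mathfrak{m}^{\ell-1}X' \to \mathfrak{m}^{\ell-1}X \to \mathfrak{m}^{\ell-1}X'' \to 0
\]
is a sequence of trivial $kG$-modules, because $\chi_i(g)-1 \in \mathfrak{m}$ acts as zero on $\mathfrak{m}^{\ell-1}L_i$; it too splits.

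The main obstacle is combining these two partial splittings into a single $\widetilde{R}G$-splitting of the original sequence. The lifting obstruction lies in $H^1(G, \tn{Hom}_{\widetilde{R}}(X'', \mathfrak{m}^{\ell-1}X'))$, a group with trivial $kG$-module coefficients that is in general nonzero; vanishing must come from the specific extension class, not from the cohomology group. To kill it I would exploit the product decomposition of $X$: via the projections $p_i: X \to L_i$ the cocycle $\delta$ decomposes into components $\delta_i = p_i \circ \delta: X'' \to L_i$, and on each rank-$1$ factor the cocycle equation becomes linear in the nilpotent element $\chi_i(g)-1 \in \mathfrak{m}$, reducing to a soluble equation inside $L_i$. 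Reassembling the factor-wise coboundaries into an element of $\tn{Hom}_{\widetilde{R}}(X'', X')$---rather than merely of $\tn{Hom}_{\widetilde{R}}(X'', X)$---is the delicate step, and uses crucially that $X'$ as a $G$-stable $\widetilde{R}$-summand of $\prod L_i$ is controlled by its images under the projections $p_i$ together with the inductive splitting of the quotient sequence.
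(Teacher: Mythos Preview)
Your argument has a genuine gap in the final paragraph, and the approach is more complicated than necessary.  The inductive scheme correctly identifies that the lifting obstruction lives in $H^1(G,\tn{Hom}_{\widetilde{R}}(X'',\mathfrak{m}^{\ell-1}X'))$, a group that need not vanish; but your plan for killing the specific class is only sketched.  Decomposing $\delta$ via the projections $p_i:X\to L_i$ produces components $\delta_i:X''\to L_i$ landing in $X$, not in $X'$, and your proposed ``reassembly'' into a coboundary with values in $X'$ is never carried out.  The phrase ``a soluble equation inside $L_i$'' is not made precise, and the claim that $X'$ is ``controlled by its images under the $p_i$ together with the inductive splitting'' is an assertion, not an argument.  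At this level of detail the proof is incomplete.

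The paper's proof avoids cohomology and induction on length entirely.  The key observation is that the $L_i$ are themselves $G$-stable sublattices of $X$, so any sub-product $\prod_{i\in J}L_i$ is automatically an $\widetilde{R}G$-submodule.  Reducing modulo $\pi$ (not modulo $\mathfrak{m}^{\ell-1}$), one has $\overline{X}=\prod_I\overline{L_i}$ with each $\overline{L_i}$ a one-dimensional $k$-space; the exchange property over the field $k$ produces a subset $J\subseteq I$ with $\overline{X}=\overline{s}(\overline{X'})\oplus\prod_J\overline{L_i}$.  Since this complement is a sub-product it is a $G$-submodule, so $\overline{h}|_{\prod_J\overline{L_i}}$ is an $\overline{R}G$-isomorphism, and Nakayama lifts this to an $\widetilde{R}G$-isomorphism $h|_{\prod_JL_i}:\prod_JL_i\to X''$.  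Its inverse composed with the inclusion is the desired splitting.  The moral: rather than trying to adjust an arbitrary $\widetilde{R}$-section $\sigma$ into a $G$-map, one finds a $G$-stable complement directly inside the product structure of $X$.
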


\begin{proof}
We mimic the proof of \cite[Lem.\ 52.2]{weisspi}.  Write $X = \prod_I X_i$ with each $X_i$ an $\widetilde{R}G$-sublattice of $X$ of $\widetilde{R}$-rank 1.  The sequence remains split over $\overline{R}$ and the $\overline{X_i}$ are indecomposable $\overline{R}$-modules, so by Proposition \ref{prop exchange property} applied to these $\overline{R}$-modules there is a subset $J$ of $I$ such that $\overline{X} = \overline{s}(\overline{X'})\oplus \prod_{J}\overline{X_i}$, and this remains true over $\overline{R}G$.  Thus the $\overline{R}G$-module homomorphism $\overline{h}' = \overline{h}|_{\prod_J \overline{X_i}}$ is an isomorphism, where $\overline{h}$ is the map induced by $h$.  By Nakayama's Lemma, $h|_{\prod_J X_i} = h':\prod_J X_i \to X''$ is an isomorphism.  Now the composition of $h'^{-1}$ with the inclusion $\prod_J X_i \to X$ is a splitting of $h$.
\end{proof}

In the above proof we work modulo $\pi$ because there need not be finitely many isomorphism classes of rank $1$ $\widetilde{R}G$-lattice, so that Proposition \ref{prop exchange property} need not apply directly to $X$.

\begin{prop}\label{Prop Weiss 3 diagonal quotient}
Suppose that $R$ contains a primitive $p$th root of unity $\omega$.  Let $U$ be a pseudocompact $RG$-lattice and $V$ a diagonal pseudocompact $RG$-lattice.  If $\widetilde{U}\iso \widetilde{V}$, then $U$ is diagonal.
\end{prop}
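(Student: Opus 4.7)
The strategy is to construct a homomorphism of $RG$-lattices $\phi : V \to U$ whose reduction $\widetilde\phi$ coincides with the given isomorphism $\widetilde V \iso \widetilde U$, and then to conclude from this that $\phi$ is itself an isomorphism.

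First I would handle the conclusion step, which is straightforward once $\phi$ is in hand. Since $\omega - 1 \in \pi R \subseteq \tn{Rad}(R)$, Nakayama's Lemma for pseudocompact modules (\cite[Lem.\ 1.4]{Brumer}, used throughout the paper) applied to the cokernel of $\phi$ yields that $\phi$ is surjective. Letting $K = \ker \phi$, the short exact sequence $0 \to K \to V \to U \to 0$ splits as $R$-modules because $U$ is an $R$-lattice, so $K$ is itself an $R$-lattice; reducing modulo $(\omega - 1)$ we obtain an exact sequence $0 \to \widetilde K \to \widetilde V \to \widetilde U \to 0$ in which the right arrow is the given isomorphism, forcing $\widetilde K = 0$. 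A second application of Nakayama gives $K = 0$, so $\phi$ is an isomorphism and $U \iso V$ is diagonal.

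The construction of $\phi$ is the heart of the proof. Writing $V = \prod_i L_i$ with each $L_i$ a rank-$1$ $RG$-lattice, the composition $V \onto \widetilde V \xrightarrow{\sim} \widetilde U$ gives a map into $\widetilde U$, and the obstruction to lifting this through the surjection $U \onto \widetilde U$ sits in $\tn{Ext}^1_{RG}(V, (\omega - 1) U)$. Since $\Hom_{RG}$ commutes with finite products in the first variable (see \S \ref{subsection tensor hom and Ext}), the obstruction decomposes componentwise into elements of $\tn{Ext}^1_{RG}(L_i, (\omega - 1)U) \iso H^1(G, L_i^{-1} \ctens (\omega - 1)U)$. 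The key observation is that each such class corresponds to the problem of lifting a generator of a specific $\widetilde{L_i}$-summand of $\widetilde V$ to an element of $U$ satisfying the correct character relation, and the hypothesis $\widetilde V \iso \widetilde U$ matches these generators with elements already lying inside a rank-$1$ summand of $\widetilde U$ of the right isomorphism type; this structural compatibility is what should force the obstructions to vanish.

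The main obstacle is controlling the passage from the individual rank-$1$ lifts $L_i \to U$ to a single continuous homomorphism $\phi : V = \prod_i L_i \to U$ in the pseudocompact category. Unlike the direct sum, a map out of an arbitrary product in $RG\dash\tn{Mod}^{\mathcal{C}}$ is not determined simply by its restrictions to the factors, so some bookkeeping via the universal property of products is required; this is essentially the reason, noted in the introduction, that the infinitely generated case does not follow from the finitely generated one by a naive limit argument. An alternative route would combine Lemma \ref{Lemma diagonal sequence splits} with the $\tn{Add}(M)$-cover machinery of Theorem \ref{covers exist} applied to $M = \bigoplus_\chi L_\chi$ (a finite sum, as $G$ is a finite $p$-group and $R$ has only finitely many relevant roots of unity), producing a diagonal surjection onto $U$ and then splitting off copies of $V$ modulo $(\omega-1)$ via the lemma; either route hinges on a delicate interplay between the diagonal structure, the topology and the reduction modulo $(\omega - 1)$.
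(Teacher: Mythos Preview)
Your strategy of directly lifting the isomorphism $\widetilde V \xrightarrow{\sim} \widetilde U$ to a map $V \to U$ has a genuine gap that you yourself half-identify. First, the obstruction class lives in $\tn{Ext}^1_{RG}\bigl(\prod_i L_i,(\omega-1)U\bigr)$, and this group simply does not break up as $\prod_i \tn{Ext}^1_{RG}(L_i,(\omega-1)U)$: $\Hom$ out of an infinite product in $\Lambda\dash\tn{Mod}^{\mathcal{C}}$ is not the product of the $\Hom$'s, as you note. Second, and more seriously, even for a single rank-$1$ summand $L_i$ you give no argument that the class in $H^1\bigl(G, L_i^{-1}\ctens (\omega-1)U\bigr)$ vanishes. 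At this point $U$ is an arbitrary lattice, so the coefficient module is arbitrary and there is no reason for that $H^1$ to be zero; and the sentence about ``structural compatibility'' with a rank-$1$ summand of $\widetilde U$ begs the question, since $\widetilde U$ is only known to be isomorphic to $\widetilde V$, not to decompose in a way compatible with any decomposition of $U$ itself. Your ``alternative route'' via the $\tn{Add}(M)$-cover is a reasonable instinct but, as written, is a sketch of a hope rather than a proof: you would still need to explain why the diagonal cover of $U$ is an isomorphism, and that is exactly the statement you are trying to prove.

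The paper avoids the lifting problem altogether by reducing to the finitely generated case, where the result is already known \cite[Thm~50.2]{weisspi}. Write $U=\invlim_i U/X_i$ as an inverse limit of finite-rank lattices (Lemma~\ref{RG lattice limit of finite rank RG lattices}); since the indecomposable diagonal lattices form a finite set, Theorem~\ref{thm limits of Add are in Add} reduces the claim to showing each $U/X_i$ is diagonal. Reducing the $R$-split sequence $0\to X_i\to U\to U/X_i\to 0$ modulo $(\omega-1)$ gives $\widetilde{U/X_i}\iso \widetilde V/\rho(\widetilde{X_i})$, and Lemma~\ref{Lemma diagonal sequence splits} shows this quotient is a summand of $\widetilde V$, hence diagonal. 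Now the finitely generated theorem applies. The point is that the passage to finite rank is what makes the (otherwise intractable) lifting step disappear.
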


\begin{proof}
Write $U = \invlim_i U/X_i$ with each $U/X_i$ an $RG$-lattice of finite rank over $R$, as we may by Lemma \ref{RG lattice limit of finite rank RG lattices}.  As there are only finitely many isomorphism classes of indecomposable diagonal $RG$-lattices, by Theorem \ref{thm limits of Add are in Add} applied with $M$ the direct sum of a representative of each isomorphism class of indecomposable diagonal $RG$-lattice, it suffices to check that each $U/X_i$ is diagonal.  The sequence $0\to X_i\to U \to U/X_i\to 0$ is split over $R$ because $U/X_i$ is a lattice, so applying $\widetilde{(-)}$ it follows that $\widetilde{U/X_i}\iso \widetilde{U}/\widetilde{X_i}$.   Denoting by $\rho: \widetilde{U}\to \widetilde{V}$ the given isomorphism, we have
$$\widetilde{U/X_i}\iso \widetilde{U}/\widetilde{X_i} \iso \widetilde{V}/\rho(\widetilde{X_i}).$$
By Lemma \ref{Lemma diagonal sequence splits}, $\widetilde{V}/\rho(\widetilde{X_i})$ is a direct summand of $\widetilde{V}$, hence diagonal by the Krull-Schmidt property.  It follows that the finitely generated $RG$-lattice $U/X_i$ is diagonal modulo $\omega-1$, and hence is diagonal by the finitely generated version of this result \cite[Thm 50.2]{weisspi} (in fact, we only require Case B in the proof).  Note that the cited proof is for compact discrete valuation rings, but the same proof holds for complete discrete valuation rings because we still have Nakayama's Lemma.
\end{proof}

\begin{lemma}\label{injection mod pi means split injection}
Suppose that $f : L\to M$ is a (continuous) homomorphism of pseudocompact $R$-lattices such that $\overline{f} : \overline{L}\to \overline{M}$ is injective.  Then $f$ is a split injection in $R\dash\tn{Mod}^{\mathcal{C}}$.
\end{lemma}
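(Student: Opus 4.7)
The plan is to establish that $f$ is injective with closed image, and that the quotient $M/f(L)$ is a pseudocompact $R$-lattice, hence projective in $R\dash\tn{Mod}^{\mathcal{C}}$; the short exact sequence $0\to L\to M\to M/f(L)\to 0$ will then split in $R\dash\tn{Mod}^{\mathcal{C}}$, producing the desired retraction $r:M\to L$ of $f$.

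First I would check that $f$ is injective. If $f(x)=0$ then $\overline{f}(\overline{x})=0$, so $\overline{x}=0$ by hypothesis and $x=\pi y$ for some $y\in L$; torsion-freeness of $M$ gives $\pi f(y)=0\Rightarrow f(y)=0$, and iterating places $x$ in $\bigcap_n \pi^n L=0$ (the intersection vanishes because $L\iso \prod R$ as an $R$-module). Since $L$ is compact and $M$ is Hausdorff, $f(L)$ is closed in $M$, so $M/f(L)$ is pseudocompact. To see that $M/f(L)$ is $\pi$-torsion-free, suppose $\pi m=f(x)$; then $\overline{f}(\overline{x})=0$ forces $x=\pi x'$, whence $\pi m=\pi f(x')$ and $m=f(x')\in f(L)$ by torsion-freeness of $M$.

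The central step is then to show that any $\pi$-torsion-free pseudocompact $R$-module $N$ is projective in $R\dash\tn{Mod}^{\mathcal{C}}$. Take a projective cover $\rho:P\onto N$ (available in this category) with kernel $K$. Because $R$ is local with maximal ideal $(\pi)$, the superfluousness of $K$ combined with Nakayama's Lemma in $R\dash\tn{Mod}^{\mathcal{C}}$ forces $K\subseteq \pi P$ (otherwise, a closed complement to the image of $K$ in the pseudocompact $k$-module $P/\pi P$ would lift to a proper closed submodule $Y\subsetneq P$ with $Y+K=P$). Applying $-\ctens_R k$ to $0\to K\to P\to N\to 0$ and using that $\tn{Tor}_1^R(N,k)=N[\pi]=0$, the associated long exact sequence (together with $\tn{Tor}_1^R(P,k)=0$) yields an injection $K/\pi K\hookrightarrow P/\pi P$; but $K\subseteq \pi P$ makes this map zero, so $K/\pi K=0$, and Nakayama's Lemma gives $K=0$. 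Hence $N\iso P$ is projective.

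With $M/f(L)$ now projective in $R\dash\tn{Mod}^{\mathcal{C}}$, the sequence $0\to L\to M\to M/f(L)\to 0$ splits, producing the retraction $r$. I expect the middle step, the structural result that $\pi$-torsion-freeness implies projectivity in $R\dash\tn{Mod}^{\mathcal{C}}$, to be the main obstacle: it is the one place where one really has to combine the existence of projective covers with the local structure of $R$ and Nakayama's Lemma, rather than manipulating the given module structure directly.
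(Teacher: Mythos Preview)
Your approach is essentially the paper's: establish that $f$ is injective, show that $M/f(L)$ is $\pi$-torsion-free, conclude it is free (projective), and split the resulting short exact sequence. The paper is terser at each step---for injectivity it writes $x=\pi^n y$ with $y\notin\pi L$ in one stroke rather than iterating, and for the structural step it simply asserts ``$M/f(L)$ is torsion-free, hence free'' without further argument---but the skeleton is identical, and your projective-cover proof that a torsion-free pseudocompact $R$-module is projective is correct and supplies detail the paper omits.

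There is one genuine slip: the sentence ``since $L$ is compact and $M$ is Hausdorff, $f(L)$ is closed in $M$'' is not valid here. The residue field of $R$ is allowed to be infinite, and then pseudocompact $R$-lattices are \emph{not} compact. The conclusion that $f(L)$ is closed is still true, but for a different reason: continuous homomorphisms in $R\dash\tn{Mod}^{\mathcal{C}}$ are strict. Concretely, once you know $f$ is injective, the induced map from $L$ to the closure $\overline{f(L)}$ is a monomorphism with dense image, hence both mono and epi in the abelian category $R\dash\tn{Mod}^{\mathcal{C}}$, hence an isomorphism; so $f(L)=\overline{f(L)}$. With the compactness appeal replaced by this, your argument goes through.
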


\begin{proof}
If $f$ is not injective, then there exists $x\neq 0$ in the kernel.  Write $x=\pi^ny$ for some $y\not\in \pi L$.  Now
$$0 = f(x) = f(\pi^ny) = \pi^nf(y),$$
so that $f(y)=0$.  Hence there exists $y\not\in \pi L$ in the kernel of $f$.  But now
$$\overline{f}(\overline{y}) = \overline{f(y)} = 0,$$
contradicting the injectivity of $\overline{f}$.

\medskip

We claim that $f(L)$ is pure in $M$.  If not, then there is $y\in L\setminus \pi L$ such that $f(y) = \pi m$ for some $m\in M$.  Now $\overline{y}\neq 0$, while
$$\overline{f}(\overline{y}) =\overline{f(y)} = 0,$$
contradicting the injectivity of $\overline{f}$.  Thus $M/f(L)$ is torsion-free, hence free, and so $f$ splits.
\end{proof}

We next generalize \cite[Prop.\ 53.2]{weisspi} to infinite rank lattices:

\begin{lemma}\label{Weiss 6}
Suppose we have a short exact sequence of $RG$-lattices
$$0\to L\to T \xrightarrow{\gamma} M \to 0$$
with $L,M$ monomial and such that the induced sequence
$$0\to \widetilde{L} \to \widetilde{T} \to \widetilde{M} \to 0$$
is split exact.  Then the sequence splits.
\end{lemma}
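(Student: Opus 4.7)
My plan is two-stage: first, use the mod-$(\omega-1)$ hypothesis to deduce that $T$ is itself a monomial $RG$-lattice; second, once that is known, split the sequence by transcribing the argument of Lemma \ref{Lemma diagonal sequence splits} to the $RG$ setting, using both that $T$ is monomial and that the given $\widetilde{R}G$-splitting reduces modulo $\pi$ to an $\overline{R}G$-splitting of the reduced sequence.

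\textbf{Stage 1: $T$ is monomial.} From $\widetilde{T}\cong\widetilde{L}\oplus\widetilde{M}=\widetilde{L\oplus M}$ and the monomial module $L\oplus M$, we see $\widetilde{T}$ is monomial over $\widetilde{R}G$. Following the proof of Proposition \ref{Prop Weiss 3 diagonal quotient}, I would write $T=\invlim T/X_i$ with each $T/X_i$ finitely generated (Lemma \ref{RG lattice limit of finite rank RG lattices}). Each short exact sequence $0\to\widetilde{X_i}\to\widetilde{T}\to\widetilde{T/X_i}\to 0$ is $\widetilde{R}$-split as all terms are lattices. By a monomial analogue of Lemma \ref{Lemma diagonal sequence splits} -- whose proof is the same as the diagonal case, decomposing $\widetilde{T}$ into indecomposable monomial summands via Proposition \ref{prop exchange property}, reducing modulo $\pi$ (where rank-1 $\widetilde{R}H$-lattices become trivial, so monomials become $\overline{R}$-permutation), applying exchange over $\overline{R}G$, and lifting via Lemma \ref{injection mod pi means split injection} -- each $\widetilde{T/X_i}$ is an $\widetilde{R}G$-summand of $\widetilde{T}$, hence monomial. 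The finite-rank case of lifting monomiality from $\widetilde{R}$ to $R$, available in \cite[Prop.\ 53.2]{weisspi}, then gives $T/X_i$ monomial, and Theorem \ref{thm limits of Add are in Add} delivers $T$ monomial.

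\textbf{Stage 2: Splitting.} Now $T=\prod_k T_k$ decomposes into (finitely generated) indecomposable monomial $RG$-lattices via Proposition \ref{prop exchange property}. Reducing the given splitting $\sigma:\widetilde{M}\to\widetilde{T}$ further modulo $\pi$ produces an $\overline{R}G$-splitting $\overline{\sigma}:\overline{M}\to\overline{T}$ of the reduced short exact sequence, so that $\overline{T}=\overline{L}\oplus\overline{\sigma}(\overline{M})$ as $\overline{R}G$-modules. The $\overline{T_k}$ are indecomposable $\overline{R}G$-modules by idempotent lifting over the complete local ring $R$, so applying Proposition \ref{prop exchange property} over $\overline{R}G$ to the $\overline{R}G$-summand $\overline{L}\subseteq\overline{T}=\prod_k\overline{T_k}$ yields a subset $J$ with $\overline{T}=\overline{L}\oplus\prod_{k\in J}\overline{T_k}$. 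The restriction $\overline{\gamma}|_{\prod_{k\in J}\overline{T_k}}$ is then an $\overline{R}G$-isomorphism onto $\overline{M}$, and by Lemma \ref{injection mod pi means split injection} together with Nakayama's Lemma (surjectivity modulo $\pi$ of a map to a finitely generated module gives surjectivity) it lifts to an $RG$-isomorphism $\prod_{k\in J}T_k\to M$. Its inverse composed with the inclusion $\prod_{k\in J}T_k\hookrightarrow T$ is the desired section of $\gamma$.

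\textbf{Main obstacle.} The crucial technical ingredient is Stage 1, specifically the monomial analogue of Lemma \ref{Lemma diagonal sequence splits}. Although its proof mirrors the diagonal case, one must verify that Proposition \ref{prop exchange property} applies to the monomial category, which rests on the finiteness of the set of isomorphism classes of indecomposable monomial $\widetilde{R}G$-lattices -- itself a consequence of the finiteness of the set of rank-$1$ $\widetilde{R}H$-lattice classes for each $H\leqslant G$. Stage 2, once Stage 1 is in place, is a direct transcription of the diagonal argument with the exchange step carried out over $\overline{R}G$ rather than $\overline{R}$.
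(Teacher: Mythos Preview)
Your Stage 1 has a genuine gap. The ``monomial analogue of Lemma \ref{Lemma diagonal sequence splits}'' that you invoke cannot be proved by the method you sketch. In Lemma \ref{Lemma diagonal sequence splits} the point is that each factor $X_i$ has $\widetilde{R}$-rank $1$, so one applies the exchange property over $\overline{R}$ (not $\overline{R}G$) to the $\overline{R}$-summand $\overline{s}(\overline{X'})$; the resulting complement $\prod_J\overline{X_i}$ is then automatically an $\overline{R}G$-submodule because each $\overline{X_i}$ already is. For monomial pieces of rank greater than $1$ this automatic upgrade disappears, and you instead propose to apply Proposition \ref{prop exchange property} over $\overline{R}G$. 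But that proposition requires the inner term to be a continuous $\overline{R}G$-summand, and for the sequence $0\to\widetilde{X_i}\to\widetilde{T}\to\widetilde{T/X_i}\to 0$ you only know $\widetilde{R}$-splitting (since the terms are lattices), hence only $\overline{R}$-splitting after reduction. So the exchange step over $\overline{R}G$ is not available, and the claimed analogue is unjustified. (A minor point: the finite-rank monomiality-lifting result is \cite[Thm 50.2]{weisspi}; \cite[Prop.\ 53.2]{weisspi} is the finite-rank splitting statement, i.e.\ the finite-rank version of the very lemma you are proving.)

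There is also a structural awkwardness: deducing that $T$ is monomial from $\widetilde{T}$ monomial is exactly Theorem \ref{Weiss 3}, whose proof \emph{uses} this lemma. Citing only the finite-rank versions from \cite{weisspi} avoids formal circularity, but you are effectively reproving Theorem \ref{Weiss 3} inside a lemma that exists solely to feed into it.

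The paper's argument is quite different and never shows that $T$ is monomial. It proves directly that $\gamma$ is monomial supersurjective: for each rank-$1$ $RH$-lattice $\varphi$, apply $\Hom_{RG}(\varphi\ind{G}_H,-)$ to both rows and compare the connecting homomorphisms. Writing $L=\prod_i L_i$ with each $L_i$ indecomposable monomial (hence finitely generated), one has $\tn{Ext}^1_{RG}(\varphi\ind{G}_H,L)=\prod_i\tn{Ext}^1_{RG}(\varphi\ind{G}_H,L_i)$, and on each factor the comparison map to the $\widetilde{R}$-level is injective by the finite-rank result \cite[Prop.\ 53.2]{weisspi}. Since the lower connecting map vanishes (the tilde sequence is split), so does $\partial$, giving surjectivity of $(\varphi\ind{G}_H,T)\to(\varphi\ind{G}_H,M)$. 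Then Lemma \ref{monomial precovers monomial supersurjective} applied with the monomial module $M$ itself produces a section of $\gamma$. This uses only that $L$ and $M$ are monomial, and it is precisely the product decomposition of $L$ that reduces the question to the finite-rank case---your attempt to do the reduction via a decomposition of $T$ is where the difficulty enters.
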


\begin{proof}
We first show that $\gamma$ is monomial supersurjective.  Writing $(\varphi\ind{G}_H, -)$ instead of $\tn{Hom}_{RG}(\varphi\ind{G}_H,-)$ and applying it to the given sequence, we obtain 
$$\xymatrix{
0\ar[r] & (\varphi\ind{G}_H,L)\ar[r] & (\varphi\ind{G}_H,T)\ar[r] & (\varphi\ind{G}_H,M) \ar[r]^{\partial\quad}\ar[d] & \tn{Ext}^1_{RG}(\varphi\ind{G}_H,L)\ar[d] \\
        &                            &                            & (\widetilde{\varphi\ind{G}_H},\widetilde{M}) \ar[r]^{0\quad} & \tn{Ext}^1_{RG}(\widetilde{\varphi\ind{G}_H},\widetilde{L}) \\
}$$
Write $L = \prod L_i$ as a product of indecomposable monomial modules.  We have 
$$\tn{Ext}^1_{RG}(\varphi\ind{G}_H,L) = \prod \tn{Ext}^1_{RG}(\varphi\ind{G}_H,L_i).$$
By \cite[Prop.\ 53.2]{weisspi}, on each factor of this product the vertical right map is injective, and hence the vertical right map is itself injective.  But this forces $\partial$ to be 0, showing that $ (\varphi\ind{G}_H,T) \to (\varphi\ind{G}_H,M)$ is surjective, so that $\gamma$ is monomial supersurjective.

By Lemma \ref{monomial precovers monomial supersurjective}, the monomial supersurjectivity of $\gamma$ implies that $(M,\gamma):(M,T)\to (M,M)$ is surjective.  An element of the preimage of $\tn{id}_M$ is the required splitting of $\gamma$.
\end{proof}

We need the following standard lemma (see after Lemma \ref{Lemma db Q killed for non conjugate perm module} for definitions).

\begin{lemma}\label{rel proj lemma}
Let $U$ be a finite direct sum of modules $U_i$ each projective relative to a proper subgroup of $G$.  Suppose that the epimorphism $\gamma:Y\onto U$ splits over every proper subgroup of $G$.  Then $\gamma$ splits.
\end{lemma}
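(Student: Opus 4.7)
The plan is to reduce the statement to the definition of relative projectivity applied to each summand $U_i$ individually, and then to assemble a global splitting by summing the partial splittings obtained. Let $\iota_i:U_i\hookrightarrow U$ and $\pi_i:U\twoheadrightarrow U_i$ denote the canonical inclusion and projection of the (finite) direct sum, and for each $i$ fix a proper subgroup $H_i$ of $G$ such that $U_i$ is projective relative to $H_i$.

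First I would isolate the portion of $\gamma$ sitting over each summand. Set $Y_i:=\gamma^{-1}(U_i)$, an $RG$-submodule of $Y$; then $\gamma$ restricts to a continuous epimorphism $\gamma_i:Y_i\onto U_i$. By hypothesis there is an $RH_i$-splitting $s:U\to Y$ of $\gamma$, since $H_i$ is a proper subgroup. The composite $s\iota_i:U_i\to Y$ automatically takes values in $Y_i$, because $\gamma(s\iota_i(u))=\iota_i(u)\in U_i$, and it provides an $RH_i$-splitting of $\gamma_i$. Since $U_i$ is projective relative to $H_i$, this $RH_i$-splitting upgrades to an $RG$-splitting $\tau_i:U_i\to Y_i$ of $\gamma_i$. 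Composing with the inclusion $Y_i\hookrightarrow Y$ yields an $RG$-homomorphism $\widetilde{\tau}_i:U_i\to Y$ satisfying $\gamma\widetilde{\tau}_i=\iota_i$.

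Finally, because the direct sum is finite, $\tau:=\sum_i \widetilde{\tau}_i\pi_i:U\to Y$ is a well-defined continuous $RG$-module homomorphism, and
$$\gamma\tau \;=\; \sum_i \gamma\widetilde{\tau}_i\pi_i \;=\; \sum_i \iota_i\pi_i \;=\; \tn{id}_U,$$
so $\gamma$ splits. There is no genuine obstacle here: the argument simply unwinds the definition of relative projectivity one summand at a time, and uses the finiteness of the decomposition to sum the partial splittings (in the pseudocompact setting this finite direct sum agrees with a finite direct product, so there are no convergence or topological subtleties). The assumption that $\gamma$ splits over \emph{every} proper subgroup is used precisely because the subgroups $H_i$ relative to which the various $U_i$ are projective may differ from summand to summand.
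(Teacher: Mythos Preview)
Your argument is correct, but it takes a different route from the paper's proof. You work directly from the \emph{definition} of relative projectivity: for each $i$ you form the pullback $Y_i=\gamma^{-1}(U_i)$, observe that the given $RH_i$-splitting of $\gamma$ restricts to an $RH_i$-splitting of $\gamma_i:Y_i\onto U_i$, and then invoke relative $H_i$-projectivity of $U_i$ to upgrade this to an $RG$-splitting. The paper instead uses Higman's trace criterion: it writes $\tn{id}_{U_i}=\tn{Tr}_{H_i}^G(\alpha_i)$ for some $\alpha_i\in\tn{End}_{RH_i}(U_i)$, takes an $RH_i$-splitting $s_i:U\to Y$ of $\gamma$ for each $i$, and checks directly that $\sum_i\tn{Tr}_{H_i}^G(s_i\alpha_i)$ is an $RG$-splitting of $\gamma$. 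Your approach has the advantage of being self-contained from the bare definition, with no appeal to the trace characterization; the paper's approach is a one-line computation once that characterization is in hand and avoids the auxiliary submodules $Y_i$. Both use the finiteness of the decomposition in exactly the same way, to justify summing the partial splittings.
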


\begin{proof}
Suppose that each $U_i$ is relatively $H_i$-projective for $H_i<G$.  Then $\tn{id}_{U_i} = \tn{Tr}_{H_i}^G(\alpha_i)$ for some $\alpha_i\in \tn{End}_{RH_i}(U_i)$.  Hence $\tn{id}_U = \sum_i \tn{Tr}_{H_i}^G(\alpha_i)$.  For each $i$, let $s_i$ be a splitting of $\gamma$ as an $H_i$-module homomorphism.  Using \cite[Lem.\ 3.6.3]{Benson} we obtain
$$\gamma\left(\sum_i\tn{Tr}_{H_i}^G(s_i\alpha_i)\right) = \sum_i\tn{Tr}_{H_i}^G(\gamma s_i\alpha_i) = \sum_i \tn{Tr}_{H_i}^G(\alpha_i) = \tn{id}_U.$$ 
\end{proof}

The next theorem is a generalization of \cite[Thm 50.2]{weisspi} to pseudocompact modules.

\begin{theorem}\label{Weiss 3}
Let $R$ be a complete discrete valuation ring containing a primitive $p$th root of unity $\omega$.  If $U$ is a pseudocompact lattice such that $\widetilde{U}$ is monomial, then $U$ itself is monomial.
\end{theorem}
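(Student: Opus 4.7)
My plan is to use the monomial cover $\gamma : V \onto U$ provided by Theorem \ref{covers exist} and to show that $\gamma$ is itself an isomorphism, whence $U \iso V$ is monomial.  Since $R$ is an integral domain and $G$ is finite, there are only finitely many isomorphism classes of $R$-rank $1$ $RH$-lattices, so the category of monomial $RG$-modules is $\tn{Add}^{\mathcal{C}}(M)$ for $M$ the finite direct sum of representatives of the induced modules $V_H\ind{G}_H$.  Proposition \ref{Prop restriction to Add and Proj Equiv} then identifies $\gamma$ with the projective cover of $\tn{Hom}_{RG}(M,U)$ in $\tn{Mod}^{\mathcal{C}}\dash E$, where $E = \tn{End}_{RG}(M)$ is pseudocompact by Lemma \ref{Lemma E pseudocompact}.

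The crux is to show that the reduction $\widetilde{\gamma} : \widetilde{V} \to \widetilde{U}$ is a monomial cover of $\widetilde{U}$ in $\widetilde{R}G\dash\tn{Mod}^{\mathcal{C}}$.  Because $\omega - 1 \in \tn{Rad}(R) \subseteq \tn{Rad}(E)$, a projective cover in $\tn{Mod}^{\mathcal{C}}\dash E$ reduces to a projective cover in $\tn{Mod}^{\mathcal{C}}\dash \widetilde{E}$ by a standard Nakayama argument in the pseudocompact setting, using that the image of $\tn{Rad}(E)$ in $\widetilde{E}$ is contained in $\tn{Rad}(\widetilde{E})$.  Combined with the compatibility of the equivalence of Proposition \ref{Prop restriction to Add and Proj Equiv} with reduction modulo $\omega - 1$ (which relies on $M$ being finitely generated), this yields the claim.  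Since $\widetilde{U}$ is monomial by hypothesis, its monomial cover is the identity, so $\widetilde{\gamma}$ is an isomorphism.  Further reducing modulo $\pi$, the map $\overline{\gamma} : \overline{V} \to \overline{U}$ is an isomorphism, in particular injective, and Lemma \ref{injection mod pi means split injection} then gives that $\gamma$ is a split injection of lattices.  Together with the surjectivity of $\gamma$, this forces $\gamma$ to be an isomorphism.

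The main obstacle is verifying that the monomial cover construction commutes with reduction modulo $\omega - 1$ -- specifically, that the functors $U(-) = \tn{Hom}_{RG}(M,-)$ and $V(-) = -\ctens_E M$ from Section \ref{Section Change of category} are compatible with passage to the $\widetilde{R}G$-setting.  The delicate point is that the natural map
\[
\tn{Hom}_{RG}(M,U)/(\omega-1)\tn{Hom}_{RG}(M,U) \to \tn{Hom}_{\widetilde{R}G}(\widetilde{M},\widetilde{U})
\]
is always injective, but its surjectivity is controlled by a term coming from $\tn{Ext}^{1}_{RG}(M,U)$ which need not vanish in general (for instance, $\tn{Ext}^{1}_{RH}(V_H,U\res{H})$ is $|H|$-torsion, and $|H|$ is not a unit times $\omega - 1$).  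Handling this Ext obstruction is the heart of the proof.  I expect to overcome it by combining Lemma \ref{Weiss 6} -- which precisely lifts $\widetilde{R}G$-splittings back to $RG$-splittings in the presence of monomial end terms -- with the relative projectivity lemma \ref{rel proj lemma}, possibly via induction on $|G|$ passing to modules relatively projective over proper subgroups so that Mackey decomposition and Eckmann--Shapiro can be brought to bear.
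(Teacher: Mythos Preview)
Your proposal correctly identifies the monomial cover as the central object and correctly locates the obstruction in the failure of $\tn{Hom}_{RG}(M,-)$ to commute with reduction modulo $\omega-1$.  However, the plan for removing that obstruction is where the argument breaks down, and the gap is not merely one of missing detail.

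The difficulty is the diagonal piece.  By induction on $|G|$ and Eckmann--Shapiro you can indeed control $\tn{Ext}^1_{RG}(\varphi\ind{G}_H,U)\iso\tn{Ext}^1_{RH}(\varphi,U\res{H})$ for \emph{proper} $H$, because $U\res{H}$ is then monomial and the injectivity result underlying Lemma~\ref{Weiss 6} applies.  But for $H=G$ --- that is, for rank-$1$ $RG$-lattices $\varphi$ themselves --- you have no such leverage: $U$ is not yet known to be monomial, so neither Lemma~\ref{Weiss 6} (which needs both end terms monomial) nor the induction hypothesis says anything about $\tn{Ext}^1_{RG}(\varphi,U)$.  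Without this, you cannot show that $\widetilde{\gamma}$ is even a precover, let alone the cover, and the scheme of deducing that $\widetilde{\gamma}$ is an isomorphism collapses.  There is a second, related issue: to transport the projective-cover property through Proposition~\ref{Prop restriction to Add and Proj Equiv} you also need $\widetilde{E}\iso\tn{End}_{\widetilde{R}G}(\widetilde{M})$, which is the same Ext obstruction with $U=M$.

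The paper sidesteps exactly this problem.  It does \emph{not} try to prove that the reduction of the cover is a cover.  Instead it writes $\widetilde{U}=T\oplus Y$ with $T$ diagonal and $Y$ projective relative to proper subgroups; the inductive hypothesis plus Lemma~\ref{rel proj lemma} then splits $Y$ off from $\widetilde{P}$, and the exchange property (Proposition~\ref{prop exchange property}) on $\widetilde{P}=\prod\widetilde{P_i}$ produces a summand $W$ of $P$ with $\widetilde{U/W}\iso T$.  The diagonal case $U/W$ is handled separately by Proposition~\ref{Prop Weiss 3 diagonal quotient}, which reduces to finite rank via Lemma~\ref{RG lattice limit of finite rank RG lattices} and invokes the finitely generated theorem of Weiss.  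Only \emph{after} both $W$ and $U/W$ are known to be monomial is Lemma~\ref{Weiss 6} applied to split $0\to W\to U\to U/W\to 0$.  In other words, the paper isolates the diagonal case and treats it by passage to finite rank rather than by any Ext-vanishing argument; your proposal lacks a substitute for this step.
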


\begin{proof}
We work by induction on the order of $G$.  If $|G|=1$ the result is obvious.  We suppose that the result holds for any proper subgroup of $G$.

Applying the functor $\widetilde{(-)}$ to the monomial cover $c:P\to U$ yields the commutative square
$$\xymatrix{
P\ar[r]^{c}\ar[d] & U\ar[d] \\
\widetilde{P}\ar[r]_{\widetilde{c}} & \widetilde{U}}$$
For any proper subgroup $H$ of $G$ the homomorphism $c\res{H}$ is a surjection from the monomial module $P\res{H}$ to the module $U\res{H}$, the latter monomial by induction.  Using Frobenius reciprocity, we see that $c\res{H}$ is a monomial precover.  Hence, since $U\res{H}$ is monomial, the homomorphism $c\res{H}$ is a split surjection for every proper subgroup $H$ of $G$.  Thus $\widetilde{c}$ also splits over every proper subgroup of $G$.

The module $\widetilde{U}$ is monomial by hypothesis, so write it as $T\oplus Y$, where $T$ is diagonal and $Y$ is projective relative to proper subgroups of $G$.  Consider the composition
$$\widetilde{P} \xonto{\widetilde{c}} T\oplus Y \xonto{\pi_Y}Y.$$
This homomorphism splits over every proper subgroup of $G$ (because $\widetilde{c}$ does) and hence, since $Y$ is projective relative to proper subgroups, it splits over $G$  (to see this, apply Lemma \ref{rel proj lemma} to $Y$ written as a finite sum of modules each projective relative to a fixed maximal subgroup of $G$, for example).  Let $\beta: Y\to \widetilde{P}$ be an $\widetilde{R}G$-module homomorphism such that $\pi_Y\widetilde{c}\beta = \tn{id}_Y$.  Define the following submodules of $\widetilde{P}$:
$$Y' = \beta(Y),$$
$$T' = \widetilde{c}^{-1}(T).$$
Routine checks show that $\widetilde{P} = Y'\oplus T'$.
Observe that $\widetilde{c}|_{Y'}: Y'\to Y$ is an isomorphism.

\medskip

Write $P = \prod_{i\in I}P_i$ with the $P_i$ indecomposable monomial lattices.  Hence $\widetilde{P} = \prod_I \widetilde{P_i}$.  This module has the exchange property (Proposition \ref{prop exchange property} can be applied because there are finitely many isomorphism classes of indecomposable monomial $RG$-lattice, hence finitely many $\widetilde{P_i}$ up to isomorphism), so we can write 
$$\widetilde{P} = \prod_{i\in J} \widetilde{P_i} \oplus T'$$
for some subset $J\subseteq I$.

Let $W = \prod_J P_i$ be the corresponding summand of $P$.  It follows that $\widetilde{P} = \widetilde{W}\oplus T'$, and hence that
$$\widetilde{W} \iso \widetilde{P}/T'\iso Y'.$$

The reader may check that $\widetilde{c}|_{\widetilde{W}}$ is injective. 
We claim that $\widetilde{U} = \widetilde{c}(\widetilde{W})\oplus T$: That $\widetilde{U} = \widetilde{c}(\widetilde{W})+T$ is obvious, so consider $u\in \widetilde{c}(\widetilde{W})\cap T$, and write $u=\widetilde{c}(w)$ for $w\in \widetilde{W}$.  Then 
$$w\in \widetilde{W} \cap \widetilde{c}^{-1}(T) = \widetilde{W}\cap T' = 0,$$
so that $u = \widetilde{c}(0) = 0$.

We have the following diagram:
$$\xymatrix{
0\ar[r] & W\ar[r]^{c|_{W}}\ar[d] &  U\ar[r]\ar[d] & U/W\ar[r]\ar[d]   & 0 \\ 
0\ar[r] & \widetilde{W}\ar[r]            & \widetilde{U}=T\oplus Y\ar[r]    & \widetilde{U/W}\ar[r] & 0
}$$
The second row is exact because $\widetilde{(-)}$ is right exact and $\widetilde{c}|_{\widetilde{W}}$ is injective.  It follows from the claim above that $\widetilde{U/W} \iso T$.  Thus, by Proposition \ref{Prop Weiss 3 diagonal quotient}, $U/W$ is monomial.  Meanwhile, $W$ is monomial by construction.

Finally, the lower sequence splits ($\widetilde{c}|_{\widetilde{W}}$ is a split injection) so by Lemma \ref{Weiss 6}, $U\iso W\oplus U/W$ is monomial, as required.
\end{proof}

We prove Theorem \ref{main}.  We work by induction on the order of $G$.  If $N=1$ the result is immediate so suppose that $N\neq 1$.  There exists a central normal subgroup $M$ of $G$ of order $p$ contained in $N$.  Consider the $G/M$-module $U^M$.  We have $(U^M)^N = U^N$ is a permutation $G/N = (G/M)/(N/M)$-module, and $U^M\res{N/M}\iso (U\res{N})^M$ is a free $N/M$-module.  Hence by induction we have that $U^M$ is a permutation $G/M$-lattice.  But we also have that $U\res{M} = U\res{N}\res{M}$ is free, and hence we need only show that the theorem holds when $N$ is central of order $p$.  We assume this from now on.  Denote by $\Sigma_N$ the element $\sum_{n\in N}n\in RN$.

\begin{lemma}\label{kernel SigmaN implies iso}
Let $N$ be a central subgroup of $G$ of order $p$ and let $U$ be a pseudocompact $RG$-lattice such that $U^N$ is a permutation module and $U\res{N}$ is free.  Let $L$ be a pseudocompact permutation $RG$-lattice and let $f: L\to U/\Sigma_NU$ be a surjection having kernel $\Sigma_NL$.  Then $L\iso U$.
\end{lemma}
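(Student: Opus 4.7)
The plan has four stages: first deduce $L\res{N}$ is free, then construct a lift $\widetilde{f}$ of $f$, prove surjectivity of $\widetilde{f}$ by Nakayama, and finally prove injectivity of $\widetilde{f}$ (the hard step). For the first step, since $U\res{N}$ is $RN$-free, $\Sigma_N U = U^N$ and $(U/\Sigma_N U)^N = 0$ (using $(RN/R\Sigma_N)^N = 0$ in characteristic zero). Any non-trivial $N$-fixed basis element $x$ of $L$ would satisfy $f(x) \in (U/\Sigma_N U)^N = 0$, forcing $x \in \ker f = \Sigma_N L$; but an explicit expansion of $x = \Sigma_N l$ in the basis of $L$ shows this requires $p$ to be a unit in $R$, contradicting mixed characteristic. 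Hence $L\res{N}$ is free, $\Sigma_N L = L^N$, and $f$ descends to an isomorphism $\overline{f}: L/L^N \cong U/U^N$.

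For the second step, I would use that $U^N$ is coflasque by Lemma~\ref{perm modules coflasque} (since $|G|$ is a $p$-power, not a zero-divisor in $R$), so $H^1(H, U^N) = 0$ for all $H \leq G$ and the projection $U \onto U/\Sigma_N U$ is supersurjective. Lemma~\ref{perm precovers supersurjective}(2) then supplies the continuous lift $\widetilde{f}: L \to U$. For surjectivity: $\widetilde{f}(L) + U^N = U$, so applying $\Sigma_N$ (noting $\Sigma_N U^N = p U^N$) gives $\Sigma_N \widetilde{f}(L) + p U^N = U^N$. Since $p \in \pi R \subseteq \tn{Rad}(R)$ in mixed characteristic, pseudocompact Nakayama yields $U^N = \Sigma_N \widetilde{f}(L) \subseteq \widetilde{f}(L)$, hence $\widetilde{f}(L) = U$.

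For injectivity, a snake-lemma chase on $0 \to L^N \to L \to L/L^N \to 0$ and $0 \to U^N \to U \to U/U^N \to 0$ connected by $\widetilde{f}$ (using that $\overline{f}$ is an iso and $\widetilde{f}$ is surjective) gives $\ker \widetilde{f} \subseteq L^N$ and $\widetilde{f}|_{L^N}: L^N \onto U^N$ surjective, so it suffices to prove $\widetilde{f}|_{L^N}$ is an isomorphism. My approach is to first establish abstractly that $L^N \cong U^N$: taking $N$-coinvariants of the isomorphism $L/L^N \cong U/U^N$ produces an isomorphism of permutation $(R/p)[G/N]$-modules (one computes $(R\db{G/H}/R\db{G/NH})_N \cong (R/p)\db{G/NH}$ for $H\cap N = 1$, and dually $(U/U^N)_N \cong U^N/pU^N$), and the exchange property (Proposition~\ref{prop exchange property}) over the finite-length local pseudocompact algebra $(R/p)[G/N]$ identifies the multisets of indecomposable summands, yielding $L^N \cong U^N$ as $RG/N$-modules. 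I would then promote $\widetilde{f}|_{L^N}$ to a supersurjection, so that Lemma~\ref{Lemma supersurj onto Rperm is split} splits off $L^N = U^N \oplus \ker(\widetilde{f}|_{L^N})$, and Krull-Schmidt cancellation against the abstract isomorphism $L^N \cong U^N$ forces the kernel to vanish.

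The main obstacle I foresee is the supersurjectivity of $\widetilde{f}|_{L^N}$: the direct cohomological attempt (showing $H^1(H, \ker \widetilde{f}) = 0$ for $H \supseteq N$) is circular, because the long exact sequence for $0 \to \ker \widetilde{f} \to L^N \to U^N \to 0$ combined with coflasqueness of $L^N$ turns this vanishing into precisely the surjectivity we seek. I anticipate resolving it by passing modulo $\pi$ and invoking Lemma~\ref{injection mod pi means split injection} together with a Cliff-Weiss-style argument (as in Theorem~\ref{ThmCliffWeiss}) to show directly that $\overline{\widetilde{f}}$ is an isomorphism, from which $\widetilde{f}$ is an isomorphism by the standard Nakayama argument for lattices.
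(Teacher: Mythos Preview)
Your first three stages are correct and match the paper's proof almost verbatim: the freeness of $L\res{N}$, the lift of $f$ via coflasqueness of $U^N$ and Lemma~\ref{perm precovers supersurjective}, and surjectivity by Nakayama are exactly what the paper does (the paper phrases the Nakayama step as ``$RN$ is local and $\widehat f$ is an isomorphism modulo $\Sigma_N$'', which is your argument in compressed form, since $\Sigma_N = p + \sum_{n\in N}(n-1)\in\tn{Rad}(RN)$).

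The injectivity step, however, is where you go badly astray. You correctly extract from the snake lemma that $K:=\ker\widetilde f\subseteq \Sigma_N L$, but then launch into an elaborate programme (abstract isomorphism $L^N\cong U^N$ via coinvariants and the exchange property, supersurjectivity of $\widetilde f|_{L^N}$, splitting, Krull--Schmidt cancellation, and a fallback Cliff--Weiss argument) that you yourself cannot close. None of this is needed. The missing observation is one line: since $U\res{N}$ is free, hence $RN$-projective, the short exact sequence $0\to K\to L\to U\to 0$ splits over $RN$. Writing $L=K\oplus s(U)$ as $RN$-modules and using $K\subseteq\Sigma_N L$, any $k\in K$ satisfies $k=\Sigma_N(k_0+s(u_0))$; the $K$-component gives $k=\Sigma_N k_0$, so $K=\Sigma_N K\subseteq \tn{Rad}(RN)K$, and pseudocompact Nakayama forces $K=0$. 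The paper packages this as ``injective by Lemma~\ref{injection mod pi means split injection}'' (running the same argument over $kN$ to see $\overline{\widetilde f}$ is injective), but the content is this direct Nakayama step on the kernel. Your supersurjectivity/Krull--Schmidt detour is not only unnecessary but, as you note, circular; drop it entirely.
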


\begin{proof}
Note that $RN/RN^N$ is torsion-free (as an $R$-module), so $U/U^N$ is torsion-free since $U\res{N}$ is free.  Note also that since $U$ is free over $N$, we have $\Sigma_N U = U^N$.  Write $L = \prod_i R[G/H_i]$ (via Proposition \ref{Prop perm module characterization}).  If for some $i$ we had $N\leqslant H_i$, then by the Mackey decomposition, $L\res{N}$ would have a trivial summand $R$.  But then $R/\Sigma_NR = R/pR$ would be isomorphic to a submodule of $U/\Sigma_NU$ -- impossible, since $U/\Sigma_NU$ is torsion-free.  Hence $L\res{N}$ is free.

Because $\Sigma_N U = U^N$ is a permutation module by hypothesis, we have $H^1(H,\Sigma_NU)=0$ for all $H$ by Lemma \ref{perm modules coflasque}.  Thus the homomorphism $U\onto U/\Sigma_N U$ is supersurjective and hence, by Lemma \ref{perm precovers supersurjective}, $f$ lifts to a homomorphism $\widehat{f}:L\to U$.

Consider the triangle
$$\xymatrix{
   &   L\ar[dl]_{\widehat{f}}\ar[dr]^f  &   \\
U\ar[rr]  &  & U/\Sigma_NU}$$
Considering this triangle modulo $\Sigma_N$ shows that $\widehat{f}$ is an isomorphism modulo $\Sigma_N$.  But $RN$ is local, hence $\widehat{f}$ is surjective by Nakayama's Lemma and injective by Lemma \ref{injection mod pi means split injection}. 
\end{proof}

By the discussion before Lemma \ref{kernel SigmaN implies iso}, Theorem \ref{main} follows from the following special case:

\begin{theorem}
Let $R$ be a discrete valuation ring whose field of fractions has characteristic 0 and whose residue field has characteristic $p>0$.  Let $G$ be a finite $p$-group and let $U$ be a pseudocompact $RG$-lattice.  Suppose $G$ has a central subgroup $N$ of order $p$ such that
\begin{itemize}
\item $U\res{N}$ is free,
\item $U^N$ is a permutation module.
\end{itemize}
Then $U$ itself is a permutation module.
\end{theorem}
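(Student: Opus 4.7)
The plan is to apply Lemma~\ref{kernel SigmaN implies iso}: the theorem follows once we construct a pseudocompact permutation $RG$-lattice $L$ together with a surjection $f : L \twoheadrightarrow U/\Sigma_N U$ whose kernel is exactly $\Sigma_N L$. We will produce this $L$ indirectly, by first establishing that $U$ is monomial (via Theorem~\ref{Weiss 3}) and then refining the monomial structure to a permutation structure using the specific hypotheses.

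For the monomial step, if $R$ does not already contain a primitive $p$-th root of unity $\omega$ we first base change to $R[\omega]$, using Theorem~\ref{ThmCliffWeiss} to check that both the hypothesis and the conclusion descend. Assuming $\omega \in R$, set $\widetilde R = R/(\omega-1)R$ and $\widetilde U = U/(\omega-1)U$. The analogous hypotheses hold for $\widetilde U$ over $\widetilde R G$: the module $\widetilde U\res{N}$ is free, and the natural map $\widetilde U^N \to \widetilde{U^N}$ is an isomorphism (the reduction commutes with $N$-invariants because $U^N$ is coflasque, by Lemma~\ref{perm modules coflasque}), so $\widetilde U^N$ is a permutation $\widetilde R G$-module. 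A separate argument tailored to the characteristic-$p$ ring $\widetilde R$, carried out via the explicit formula for the permutation cover from Theorem~\ref{Theorem explicit permutation cover}, shows that $\widetilde U$ is itself a permutation $\widetilde R G$-module, hence monomial. Applying Theorem~\ref{Weiss 3} then yields that $U$ is monomial.

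For the refinement step, we use Proposition~\ref{prop exchange property} to decompose $U$ as a product of indecomposable monomial summands, each a summand of some $V_i\ind{G}_{H_i}$ with $V_i$ a rank-$1$ $RH_i$-lattice. A Mackey decomposition of $V_i\ind{G}_{H_i}\res{N}$, combined with the freeness of $U\res{N}$, forces $H_i \cap N = 1$ for every $i$. A further Mackey computation identifies $(V_i\ind{G}_{H_i})^N$ as a module induced from the inflation of $V_i$ along $NH_i/N \iso H_i$. The hypothesis that $U^N$ is a permutation module, together with the Krull-Schmidt property for pseudocompact modules, then forces each such summand to be a permutation module, so each $V_i$ is the trivial character. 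Thus $U \iso \prod_i R[G/H_i]$, the desired permutation structure.

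The main obstacle will be showing that $\widetilde U$ is a permutation $\widetilde R G$-module in the monomial step. Since $\widetilde R$ is a local Artinian ring of characteristic $p$ but not a complete discrete valuation ring, the main theorem cannot be invoked directly over $\widetilde R$, and none of the Cliff-Weiss type lifts apply verbatim. The most plausible approach is to exploit the explicit permutation cover together with Lemma~\ref{Lemma check supersurjective} in order to construct the required isomorphism directly in characteristic $p$; alternatively one could descend further to the residue field $k$, prove the statement there, and lift back to $\widetilde R$ by an induction on the length of $\widetilde R$ as an $R$-module.
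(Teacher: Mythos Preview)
Your proposal has a genuine gap at exactly the point you flag as ``the main obstacle'': you never establish that $\widetilde U$ is monomial, and your suggested attacks (the explicit permutation cover formula, or induction on the length of $\widetilde R$) are not developed and are not obviously easier than the theorem itself. Showing that $\widetilde U$ is a permutation $\widetilde R G$-module is essentially asking for the theorem over the Artinian local ring $\widetilde R$, so your reduction is circular as it stands.

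The key idea you are missing is that one should not apply Theorem~\ref{Weiss 3} to $U$ at all, but rather to $U/\Sigma_N U$. The point is that $N$ acts on $U/\Sigma_N U$ through $R[N]/(\Sigma_N)\cong R[\omega]$, so $U/\Sigma_N U$ is naturally an $R[\omega]G$-lattice; and then multiplication by $\Sigma_N$ gives
\[
(U/\Sigma_N U)\big/(\omega-1)(U/\Sigma_N U)\ \cong\ U^N/pU^N,
\]
which is a permutation module directly from the hypothesis on $U^N$. Thus the hypothesis of Theorem~\ref{Weiss 3} is satisfied for free, with no auxiliary argument needed. One then reads off from the monomial decomposition of $U/\Sigma_N U$ a permutation lattice $L=\prod_i R[G/K_i]$ together with a surjection $L\to U/\Sigma_N U$ with kernel $\Sigma_N L$, and Lemma~\ref{kernel SigmaN implies iso} finishes the job. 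Your opening sentence says you will construct exactly such an $L$ and $f$, but your actual argument abandons this route and instead tries to decompose $U$ itself---which you cannot do without first knowing $U$ is monomial.

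A second divergence: the paper does not reduce the general case to $\omega\in R$ by base change and descent via Theorem~\ref{ThmCliffWeiss}. It first treats the special case where $p$ is prime in $R$ (that is where $R[\omega]$ enters, applied to $U/\Sigma_N U$), and then handles arbitrary $R$ by passing \emph{down} to a Cohen coefficient ring $S\subseteq R$ with $p$ prime in $S$, applying the special case to $U$ regarded as an $SG$-lattice, and using the permutation cover to conclude over $R$. Your base-change-and-descend manoeuvre may be salvageable, but it is not what is done here and you would still need to close the gap above.
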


\begin{proof}
We prove this first in the special case that $p$ is prime in $R$.  Adjoin a primitive $p$th root of unity $\omega$ to $R$ to obtain the ring $R[\omega] = R[x]/(1+x+\hdots+x^{p-1})$.  The kernel of the surjective ring homomorphism $R[\omega]\to R/p$ sending $\omega$ to $1$ is generated by $\omega-1$, so that $p$ prime in $R$ implies that $\omega-1$ is prime in $R[\omega]$.  If $p=2$ then $R[\omega] = R$, but this does not affect the argument.  
Fix an isomorphism of groups $\psi:N\to \langle\omega\rangle$. 
If $p$ is odd, define the structure of $R[\omega]G$-lattice on the module $U/\Sigma_NU$ via $\psi$ by defining the action of $\omega$ as
$$\omega\cdot (u+\Sigma_NU) := \psi^{-1}(\omega)u + \Sigma_NU,$$
(if $p$ is $2$ there is nothing to define, but in this case note that the formula agrees with the action of $\omega=-1$ on $U/\Sigma_NU$). This structure is natural in the sense that having fixed $\psi$, we can consider $U\mapsto U/\Sigma_NU$ as a functor $RG\dash\tn{Mod}\to R[\omega]G\dash\tn{Mod}$. 
Multiplication by $\Sigma_N$ yields an isomorphism
$$(U/\Sigma_NU)/(\omega-1)(U/\Sigma_NU) \iso U^N/pU^N.$$
This is a permutation module (because $U^N$ is), and hence by Theorem \ref{Weiss 3}, $U/\Sigma_NU$ is a monomial $R[\omega]G$-lattice.  So write
$$U/\Sigma_NU = \prod_{i\in I}V_i\ind{G}_{H_i},$$
where $V_i$ is an $R[\omega]H_i$ lattice of rank 1.  As $R[\omega]$ contains no primitive $p^2$ root of unity, the action of each $H_i$ on the corresponding $V_i$ is given by a group homomorphism $\varphi_i$ from $H_i$ to $\langle\omega\rangle$.  Denote by $K_i$ the kernel of $\varphi_i$.  The $RN$-module $RN/\Sigma_N RN$ has no non-zero $N$-fixed points and hence, since $U$ is $N$-free, $U/\Sigma_NU$ has no non-zero $N$-fixed points.  Calculating the $N$-fixed points of $V_i\ind{G}_{H_i}$ differently, we obtain isomorphisms of $R$-modules
\begin{align*}
    0 = \left(V_i\ind{G}_{H_i}\res{N}\right)^N 
& \iso \bigoplus_{g\in G/H_iN}\tn{Hom}_N(R, {}^g\!V_i\res{{}^g\!H_i\cap N}\ind{N}) \\
& \iso \bigoplus_{g\in G/H_iN}\tn{Hom}_{{}^g\!H_i\cap N}(R, {}^g\!V_i\res{{}^g\!H_i\cap N}).
\end{align*}
It follows that $H_i\cap N$ is not contained in $K_i$ and hence that $N$ is a subgroup of $H_i$ not intersecting $K_i$.  In particular, $\varphi_i$ is not trivial.

A surjective $RH_i$-module homomorphism $R[H_i/K_i]\to V_i/(\omega-1)V_i\iso R/p$ lifts by projectivity over $H_i/K_i$ to a surjective $RH_i$-module homomorphism from $R[H_i/K_i]$ to $V_i$.
Inducing to $G$, we obtain a surjective $RG$-module homomorphism $R[G/K_i]\to V_i\ind{G}_{H_i}$ whose kernel contains (hence is equal to by comparison of ranks) $\Sigma_NR[G/K_i]$, so we have an isomorphism of $RG$-modules
$$R[G/K_i]/\Sigma_NR[G/K_i] \iso V_i\ind{G}_{H_i}.$$
Define the permutation lattice $L = \prod_{i\in I} R[G/K_i]$ together with the obvious homomorphism $L\to U/\Sigma_N U$, whose kernel is $\Sigma_N L$.  Now by Lemma \ref{kernel SigmaN implies iso}, $U\iso L$ is a permutation lattice.  This completes the special case.

\medskip

For the general case, let $S$ be a coefficient ring for $R$, in the sense of Cohen's Structure Theorem \cite{Cohen}.  Thus $S$ is a complete discrete valuation ring contained in $R$ with the following properties:
\begin{itemize}
\item The inclusion $S\to R$ realizes $R$ as a free pseudocompact $S$-module of finite rank over $S$ (cf.\cite[Ch.\ 1 \S 6]{SerreLF}). 

\item The number $p$ generates the maximal ideal of $S$.
\end{itemize}

Denote by $U'$ the module $U$ considered as an $SG$-module. Then $U'$ satisfies the hypotheses of the theorem, so that $U'$ is a permutation $SG$-module by the special case above.  As $U'$ is free over $N$, it is a product of modules $S[G/H]$ with $H\cap N=1$.  Since $\tn{id}:U'\to U'$ is the permutation cover of $U'$, we have that $R\ctens_S U'\to U$ given by $r\ctens u\mapsto ru$ is supersurjective, so is a permutation precover of $U$ by Lemma \ref{perm precovers supersurjective}.  It follows that the permutation cover of $U$ (a direct summand of $R\ctens_S U'$) is free on restriction to $N$.  Denote by 
$$0\to K\to C\xrightarrow{f} U\to 0$$
the permutation cover of $U$.  We will show that $K=0$.  By freeness of $U$ over $N$ the sequence is split over $N$ and hence
$$0\to K_N\to C_N\xrightarrow{f_N}U_N\to 0$$
is exact.  Note that $U_N$ is a permutation module and $f_N$ is supersurjective, both these claims following from the natural isomorphisms $C_N\to C^N$ and $U_N\to U^N$ given by multiplication by $\Sigma_N$, again by freeness of $C$ and $U$ over $N$.  Thus $f_N$ is a permutation precover of the permutation module $U_N$, so it splits and hence $K_N$ is a direct summand of $C_N$.  Write $C = \prod_I V_i$, a product of indecomposable modules.  Then each $(V_i)_N$ is indecomposable, and $C_N = \prod_I (V_i)_N$. By the extension property, there is a subset $J$ of $I$ such that $C_N = K_N\oplus \prod_J (V_i)_N$.  By Nakayama's Lemma, $C = K + \prod_JV_i$.  The restriction $f:\prod_J V_i \to U$ is thus also a precover of $U$, so that $I=J$ since $f$ is a cover.  It follows that $K_N=0$, hence $K=0$ as required.
\end{proof}

\end{document}